\documentclass{article}

\usepackage[sort,compress]{cite}
\usepackage{graphicx} 
\usepackage{tikz}
\usetikzlibrary{arrows.meta}
\usepackage{pifont}
\usepackage{float}
\usepackage{titlesec}
\usepackage{url}
\usepackage{amsmath,amssymb,amsthm} 
\usepackage{enumitem}               
\usepackage{times}                  
\usepackage{hyperref}
\usepackage{cleveref}
\usepackage[marginal]{footmisc}

\hypersetup{colorlinks=true,
linkcolor=blue,
citecolor=blue,
pdfborder={0 0 0}
}

\newcommand{\catchline}[5]{}

\providecommand{\keywords}[1]{\paragraph*{Keywords:} #1}

\providecommand{\rhd}{\triangleright}
\title{Quandle coloring quivers of pretzel links\fontsize{10pt}{1}}
\author{Qinghui Meng, Ximin Liu and Boxin Zhou\fontsize{8pt}{1}} 
\newenvironment{romanlist}[1][]{\begin{enumerate}[label=(\roman{enumi})]}{\end{enumerate}}
\date{}
\newtheorem{theorem}{Theorem}
\newtheorem{lemma}[theorem]{Lemma}
\newtheorem{proposition}[theorem]{Proposition}
\newtheorem{corollary}[theorem]{Corollary}
\theoremstyle{definition}
\newtheorem{definition}[theorem]{Definition}
\newtheorem{example}[theorem]{Example}

\begin{document}
\maketitle 
\begin{abstract}
In this paper, we conduct a systematic study of quandle colorings and quandle coloring quivers for pretzel links using the dihedral quandle $\mathbb{Z}_{n}$. First, we systematically investigate all possible colorings of 3-pretzel links, determining the number of distinct colorings in each case as well as the structure of their quandle coloring quivers. In order to obtain more general conclusions, we impose restrictions on $n$ based on the properties of the coefficient matrix of the system of congruence equations. So we examine the number of quandle colorings and the quandle coloring quivers for 4-pretzel links in the case where $n$ is prime. Finally, Combining the results of 4-pretzel links we rigorously derive both the coloring numbers and quandle coloring quivers for general $m$-pretzel links in the case where $n$ is prime, with full proofs provided.
\end{abstract}
\keywords{quandle coloring quiver; pretzel link; system of congruences.}
\section{General Appearance}	
	The study of knot theory has long been a central topic in topology, and quandles, as algebraic structures, provide a powerful tool for understanding its properties. The concept of quandle was introduced by Joyce \cite{Joyce1982} in 1982. By abstracting the Reidemeister moves from knot theory into algebraic axioms, he formulated the definition of quandle and applied it to the classification of knots. With the generalization of quandles and the introduction of homology theory, numerous knot invariants have been developed. In 2015, Elhamdadi and Nelson provided a comprehensive survey and exposition of these advancements in \cite{Mohamed2015}. In 1972, Gabriel introduced the concept of quivers and their representations in \cite{Gabriel1972}. In 2018, Cho and Nelson combined quiver structures with quandle colorings and proposed the notion of quandle coloring quivers in \cite{Cho2018}. In recent years, several scholars have conducted research on the quandle coloring quivers of torus links using dihedral quandles in \cite{Basi2021,Basi2021p2,Zhou2023,Zhou2024,Elhamdadi2025}.\par 
	As a result, the study of quandle coloring quivers of torus links has reached a significant level of maturity in recent literature. In 2003, Brownell investigated the number of fundamentally distinct $m$-colorings for $(p,q,r)$-pretzel knots in \cite{Brownell2003}. Subsequently, in 2013, Ostrander examined $p$-colorings of 3-pretzel knots and 4-pretzel knots in \cite{Ostrander}. However, research on colorings of pretzel links has thus far been limited to $p$-colorings.\par 
	Therefore, this paper will focus on quandle colorings and quandle coloring quivers of pretzel links using $dihedral\ quandle$ $\mathbb{Z}_{n}$. We first investigate the classical 3-pretzel links. To generalize our findings, we impose parameter restrictions and employ inductive reasoning to deduce the coloring properties of general pretzel links. We study the coloring situations of the 4-pretzel links when $n$ is a prime number, and then generalize the conclusion to the coloring situations of the general pretzel links and provide a proof. Finally, according to the coloring numbers of different types, the corresponding quiver structures are given.

	\section{Preliminary Knowledge}
	We first review some basic concepts of quandle and graph which in \cite{Mohamed2015,Basi2021}.
	\begin{definition}[Quandle]
		A quandle is a set $X$ with a binary operation $\rhd:X\times X\rightarrow X\ $satisfying:\\
		(1) For all $x\in X, x\rhd x=x.$\\
		(2) For all $y\in X$, the map $\beta_{y}:X\rightarrow X$ defined by $\beta_{y}(x)=x\rhd y\ $is invertible.\\
		(3) For all $x,y,z\in X,(x\rhd y)\rhd z=(x\rhd z)\rhd (y\rhd z)$.
	\end{definition}
	\begin{example}
		Given an oriented link $K$ and its link diagram $D$, we label each arc of the diagram with different symbols (such as $x, y, z$) and have the following relationships in Fig.\ref{t1} at the crossings. The fundamental quandle is the set of these symbols and their relationships, denoted as $\mathcal{Q}(K)$.
		\begin{figure}[H]
			\centering
			\begin{tikzpicture}
				\draw[-] (-2.1,0) -- (-0.25,0) node[above] {$x\rhd y\quad \qquad\qquad\qquad\;$};
				\draw[-] (0.25,0) -- (2.1,0) node[above] {$x\quad$};
				\draw[->] (0,-2) -- (0,2) node[right] {$y$};
			\end{tikzpicture}
			\caption{Crossing relation}
			\label{t1}
		\end{figure}
	\end{example}
	\begin{example}
		Let $X=\mathbb{Z}_{n}$ and define $x \rhd y\equiv 2y - x\ (mod\ n)$ for any $x, y \in \mathbb{Z}_{n}$. Then we call $X$ is a $dihedral\ quandle$. Furthermore, the binary operation of the dihedral quandle is its own inverse.
	\end{example}
	\begin{definition}[$Quandle\ homomorphism$]
		Let $(X,\rhd _{1})$ and $(Y,\rhd _{2})$ be two quandles, a quandle homomorphism is a map $f:X\rightarrow Y$ satisfying $f(x\rhd_{1}y) =
		f(x)\rhd _{2}f(y)$, for all $x,y\in X$.
	\end{definition}
	\begin{definition}[Quandle coloring]
		Let $X$ be a finite quandle, $K$ an oriented link and $f:\mathcal{Q}(K)\rightarrow X $ be a quandle homomorphism, the coloring
		space $Hom(\mathcal{Q}(K), X)$ is the set of all quandle homomorphisms from $\mathcal{Q}(K)$ to $X$, and the cardinality of this set is denoted by $|Hom(\mathcal{Q}(K),X)|$, it is a quandle counting invariant, which we denote by $\Phi_{X}^{\mathbb{Z}}(K)$.
	\end{definition}
	\begin{definition}[$Directed\ multigraph$]
		A directed multigraph $\vec{G}$ is an ordered triple $(V,E,\Phi)$, and $\Phi:E\rightarrow \{(x,y):x,y\in V,x\neq y\}$ is an incidence function that maps edges of E to ordered pairs of vertices in $V$. In a concrete graphical representation, edges are typically marked with arrows to indicate their direction.
	\end{definition}
	\begin{definition}[$Directed\ pseudograph$]
		The directed pseudograph $\overleftrightarrow{D}$ is a generalization of an oriented graph, where loops and repeated edges are allowed. To capture multiple edges between a pair of vertices, we replace the incidence function $\Phi$ with a weight function $c:\{(x,y);x,y\in V\}\rightarrow \mathbb{N}\cup{0}$, which assigns a number to represent the total count of edges between any two vertices. We define this as $(\overleftrightarrow{D},c)$. When $\overleftrightarrow{D}$ has $n$ vertices and $c$ is a constant function, i.e., for all $x,y\in V$, there exists some $k\in \mathbb{N}$ such that $c(x,y)=k$, we define it as $(\overleftrightarrow{K_{n}},\hat{k})$ and refer to it as a complete graph.
	\end{definition}
	\begin{definition}
		Given two graphs $G_{1}$ and $G_{2}$ with disjoint vertex sets and edge sets, when there are $c$ edges directed from every vertex of $G_{2}$ to every vertex of $G_{1}$, we define this graph as $G_{1}\overset{\leftarrow}{\bigtriangledown}_{\hat{c}} G_{2}$.
	\end{definition}
	\begin{definition}[Quandle coloring quiver]
		Let $X$ be a finite quandle and $L$ an oriented link. For any set of quandle endomorphims $S\in Hom(X,X)$, the associated quandle coloring quiver, denoted $\mathcal{Q}^{S}_{X}(L)$, is the directed graph with a vertex for every element $f\in Hom(\mathcal{Q}(L),X)$ and an edge directed from $f$ to $g$ when $g= \phi\circ f$ for an element $\phi\in S$. When $S=Hom(X,X)$, which we call the $full\ quandle\ coloring\ quiver$ of $L$ with respect to $X$, denoted by $\mathcal{Q}_{X}(L)$.
	\end{definition}\par 
	From \cite{Ostrander,Kathryn,Kim2007} we can obtain these definitions and conclusions about pretzel links:
	\begin{definition}[Tangle]
		A tangle is a region of a knot diagram enclosed within a circle such that the knot passes through the circle at four points. Given two tangles $A$ and $B$, we denote their sum as $A+B$, formed by connecting the $NE$ endpoint of $A$ to the $NW$ endpoint of $B$, and the $SE$ endpoint of $A$ to the $SW$ endpoint of $B$, as illustrated in Fig.\ref{fig2.1}.
		\begin{figure}[H]
			\centering
			\includegraphics[width=0.52\linewidth]{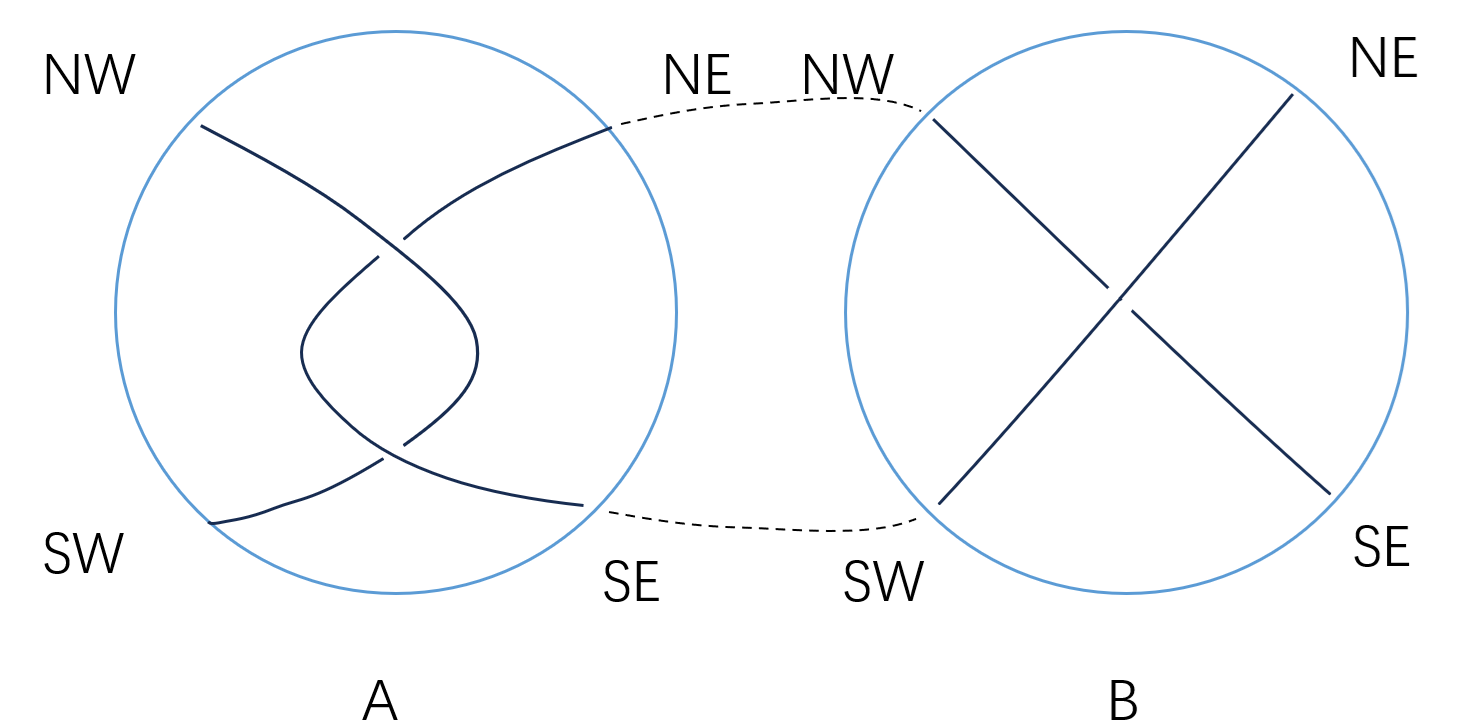}
			\caption{$tangle\ A+B$}
			\label{fig2.1}
		\end{figure}
	\end{definition}
	\begin{definition}[Pretzel knot or link]
		Given a positive integer $m$, nonzero integers $p_{1},p_{2},...,p_{m}$ and $tangles\ A_{i}$, where $A_{i}$ contains $|p_{i}|$ positive crossings when $p_{i}>0$ and $|p_{i}|$ negative crossings where $p_{i}<0$, the pretzel knot or link is defined as the sum $A_{1}+A_{2}+...+A_{m}$. For a general braided link as shown in Fig.\ref{t23}, we denote a pretzel link with $m$ strands as an $m$-pretzel link for brevity.
		\begin{figure}[H]
			\centering
			\includegraphics[width=0.5\linewidth]{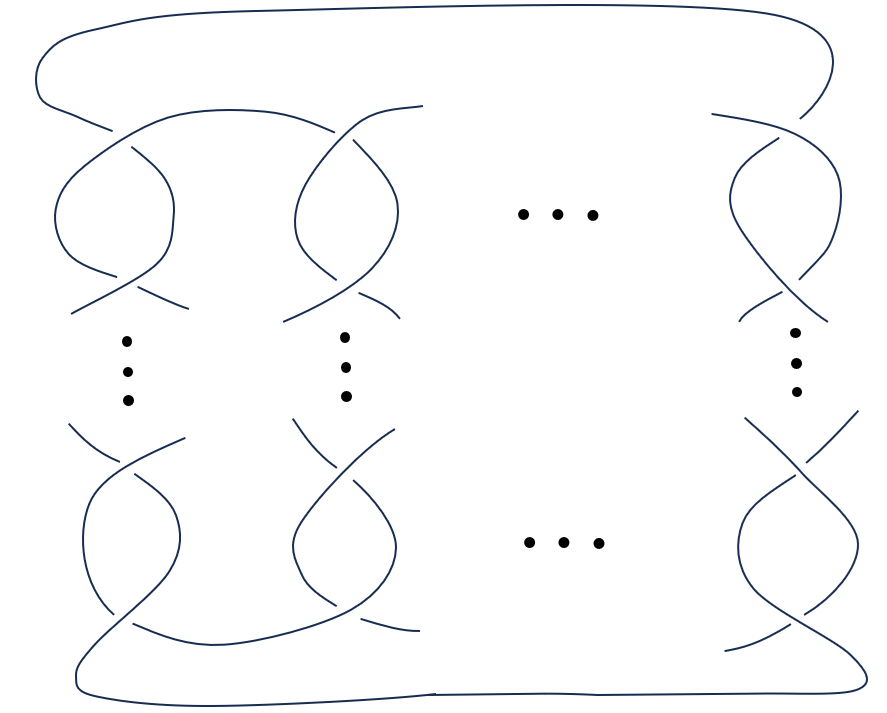}
			\caption{$m$-pretzel link}
			\label{t23}
		\end{figure}
	\end{definition}
	\begin{proposition}
		If $m$ is odd, then an n-pretzel link $(p_{1},p_{2},...,p_{m})$ is a knot if and only if none of two $p_{i}$’s are even. If $m$ is even, then $(p_{1},p_{2},...,p_{m})$ is a knot if and only if only one of the $p_{i}$’s is even. Generally the number of even $p_{i}$’s is the number of components, unless $p_{i}$’s are all odd.
	\end{proposition}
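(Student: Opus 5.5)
The plan is to reduce the count of link components to counting cycles in a $2$-regular graph built from the tangle decomposition $A_{1}+A_{2}+\dots+A_{m}$ of Fig.~\ref{t23}, and then to read off the three assertions of the proposition. (In the statement, ``$n$-pretzel link'' should read ``$m$-pretzel link''.)

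\textbf{Step 1: the graph.} Each $A_{i}$ is a vertical twist region with two top endpoints $NW_i,NE_i$ and two bottom endpoints $SW_i,SE_i$. By the definition of the tangle sum, and including the closure of the pretzel diagram, $NE_i$ is joined to $NW_{i+1}$ by a top arc $a_i$, and $SE_i$ is joined to $SW_{i+1}$ by a bottom arc $b_i$. Indices are taken mod $m$, so $a_m=a_0$ and $b_m=b_0$. Thus $NW_i$ lies on $a_{i-1}$, $NE_i$ on $a_i$, $SW_i$ on $b_{i-1}$ and $SE_i$ on $b_i$. Each component of the link alternates between these $2m$ arcs and the $2m$ strands inside the tangles.

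Form a graph $\Gamma$ on the vertex set $\{a_i,b_i\}$ in which every strand inside $A_i$ gives one edge. A twist region with $|p_i|$ half-twists joins each top endpoint to the bottom endpoint on the same side when $p_i$ is even, and to the opposite side when $p_i$ is odd. Hence the edges of $A_i$ are:
\begin{itemize}
\item $a_{i-1}b_{i-1}$ and $a_ib_i$ if $p_i$ is even;
\item $a_{i-1}b_i$ and $a_ib_{i-1}$ if $p_i$ is odd.
\end{itemize}
Every vertex has degree $2$, and the components of the link correspond bijectively to the cycles of $\Gamma$.

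\textbf{Step 2: counting cycles.} Let $k$ be the number of even $p_i$.

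\emph{Case $k\ge1$.} Let the even indices be $i_1<\dots<i_k$, cyclically. Start at $a_{i_j}$ and follow the odd tangles $i_j+1,i_j+2,\dots$; the path zigzags between top and bottom arcs. It reaches the next even tangle $i_{j+1}$, whose vertical edge turns it back. It then zigzags back along the other sequence of arcs to $b_{i_j}$, and the vertical edge of $A_{i_j}$ closes it up. This closed walk uses every arc with index in $[i_j,i_{j+1}-1]$, so each of the $k$ gaps gives exactly one cycle. Hence there are $k$ components.

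\emph{Case $k=0$.} Starting at $a_0$, each step through an odd tangle advances the index by one and switches between $a$ and $b$. After $m$ steps the walk is at $b_0$ if $m$ is odd, or at $a_0$ if $m$ is even. So for $m$ odd a single cycle covers all $2m$ vertices, giving one component. For $m$ even there are two cycles, one through $a_0$ and one through $b_0$, giving two components.

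\textbf{Step 3: conclusion.}
\begin{itemize}
\item For $m$ odd, the link is a knot exactly when $k\in\{0,1\}$, that is, when no two $p_i$ are even.
\item For $m$ even, the case $k=0$ gives $2$ components, so the link is a knot exactly when $k=1$.
\item In general the number of components equals $k$ unless all $p_i$ are odd.
\end{itemize}

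The main obstacle is Step 1. One must fix the orientation conventions of Fig.~\ref{t23} carefully, including which endpoints the closing arcs join, so that the even/odd connectivity rule for twist regions is stated correctly. Once the edge rule is right, Step 2 is a mechanical traversal.
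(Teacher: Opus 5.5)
Your proof is correct. It necessarily takes a different route from the paper, because the paper gives no proof of this proposition: it simply imports it, together with the tangle and pretzel definitions, from \cite{Ostrander,Kathryn,Kim2007}.

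You supply a self-contained combinatorial argument instead. The $2m$ connecting arcs become the vertices of a $2$-regular bipartite multigraph whose edges are the twist-region strands, and link components are its cycles. The details check out:
\begin{itemize}
\item Every edge joins an $a$-vertex to a $b$-vertex, so there are no loops.
\item Two adjacent even tangles give a double edge $a_ib_i$, i.e.\ a $2$-cycle. Your Case $k\ge1$ already covers this as a gap with no odd tangles.
\item On a gap containing $g$ odd tangles, the outbound and return legs occupy opposite letters at every index. So the walk visits each of the $2(g+1)$ vertices with indices in $[i_j,i_{j+1}-1]$ exactly once.
\item These intervals partition $\mathbb{Z}_m$, so there are exactly $k$ cycles.
\end{itemize}
You should state the distinctness remark explicitly, since it is what makes ``exactly one cycle per gap'' airtight.

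Compared with the citation, your route buys two things. First, it gives the exact component count in every case, including the all-odd case ($1$ component for $m$ odd, $2$ for $m$ even). The statement's ``unless'' clause leaves that case implicit, yet its odd/even dichotomy rests on it. Second, it shows the count depends only on $p_i \bmod 2$, so the signs of the $p_i$ are irrelevant.

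For the same reason, the obstacle you flag in Step~1 is smaller than you suggest. Orientation plays no role in connectivity. All you need is that each $A_i$ is a vertical twist region and that the diagram is the numerator closure of $A_1+\cdots+A_m$.
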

	\begin{theorem}\label{th2.13}
		For a given pretzel knot $(p, q, r)$, interchanging any two parameters among $p, q$, and $r$ results in an equivalent pretzel knot. This property extends to braid links as well, since the proof in \cite{Ostrander} is independent of the parity (evenness or oddness) of $p, q$, and $r$. Moreover, it is straightforward to verify that the conclusion holds for general pretzel links.
	\end{theorem}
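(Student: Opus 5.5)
The plan is to prove the statement first for a single transposition of adjacent tangles, and then get every rearrangement by composing such swaps. Adjacent transpositions generate the symmetric group, so invariance under swapping $A_i$ and $A_{i+1}$ in $A_1+\cdots+A_m$ gives invariance under any permutation of $(p_1,\dots,p_m)$. For $m=3$ this covers interchanging any two of $p,q,r$. I will also use that the diagram closes up cyclically: the NE/SE ends of $A_m$ are joined to the NW/SW ends of $A_1$ around the back. Hence a cyclic rotation of the parameters is realized by an isotopy of the plane, or of $S^2$. So it is enough to show that one swap of neighbours, say $A_1$ and $A_2$, yields an isotopic link.

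The key step is a \emph{flype}. Each $A_i$ is a vertical twist tangle of $|p_i|$ half-twists, which is a rational tangle. Rational tangles are invariant under rotation by $\pi$ about the horizontal axis in the plane of the diagram, and also about the vertical one; equivalently, the twists can be slid through the tangle. I will treat $A_2+\cdots+A_m$ as a single tangle $T$. I rotate $T$ by $\pi$ about the horizontal axis, carrying the strands that join it to $A_1$ along, and this turns the sum $A_1+T$ into $T'+A_1$, where $T'$ is the rotated $T$. Each twist tangle $A_i$ is carried to an isotopic copy of itself, with the same sign and the same number of crossings, since a vertical twist tangle is symmetric under this rotation. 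This flype moves $A_1$ past the whole block rather than past a single neighbour, and it amounts to a cyclic shift. To get a genuine transposition, I instead flype $A_1$ alone across $A_2$. That is, I apply the rotation only to the sub-tangle $A_1+A_2$ inside the ambient diagram; the rotation fixes the four endpoints of this sub-tangle setwise. The result is $A_2+A_1$, with each tangle reflected in a way that preserves its crossing type.

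The main obstacle is the orientation and sign bookkeeping. The paper distinguishes positive and negative crossings, and for links with several even $p_i$ the strand orientations in a tangle may change after the flype. I will argue that the parameter $p_i$ records the twist handedness, which is an unoriented notion fixed by the rotation. Whether crossings are positive or negative depends on the orientation, but the rotation carries the oriented link to an oriented link, so the underlying oriented link type is preserved. Parity plays no role anywhere in this argument, which is the sense in which the statement extends the knot case of \cite{Ostrander} to links. Finally, I will note the consequence for this paper: $\Phi_X^{\mathbb{Z}}$ and the quiver $\mathcal{Q}_X$ are link invariants, so they are symmetric in the $p_i$.
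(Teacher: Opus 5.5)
The gap is the step where you flype $A_1$ alone across $A_2$. A rotation by $\pi$ of the ball containing $A_1+A_2$ that reverses the left--right order must be about the vertical in-plane axis, or about the axis normal to the plane. Either one carries the two western endpoints to the eastern ones. In the ambient diagram the western endpoints are joined to $A_m$ and the eastern ones to $A_3$, so preserving the endpoint set is not enough. Cutting out the ball, rotating it and regluing with the outside fixed is a Conway mutation, not an isotopy. The rotation about the horizontal axis does keep west on the west, but it does not change the order of $A_1$ and $A_2$. Your first ``flype'' of $T$ has the same confusion; it is harmless there only because cyclic shifts come for free from the closure.

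A mutation is undone by an isotopy if the complementary tangle $A_3+\cdots+A_m$ is carried to itself by the same rotation, since rotating both balls at once is a rigid motion. For $m=3$ this holds because $A_3$ is rational, so your argument does give the $(p,q,r)$ case. More directly, cyclic shifts together with turning the whole diagram over (which reverses the order and preserves the handedness of every twist tangle) generate $D_3=S_3$. For $m\ge 4$ the rotation turns $A_3+\cdots+A_m$ into $A_m+\cdots+A_3$. So what you actually get is $P(p_2,p_1,p_3,\dots,p_m)\cong P(p_1,p_2,p_m,\dots,p_3)$, which is only dihedral symmetry again.

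This cannot be patched, because for $m\ge 4$ arbitrary interchanges do not preserve the link type. With all $|p_i|\ge 2$, $P(p_1,\dots,p_m)$ is a Montesinos link. By the Bonahon--Siebenmann classification, its link type determines the sequence $(1/p_1,\dots,1/p_m)$ modulo $1$ up to cyclic permutation and reversal. For instance, $P(2,3,5,7)$ and $P(3,2,5,7)$ are mutants, but they are neither isotopic nor mirror images of each other. The paper's only justification is the citation for $m=3$, and it in fact relies only on dihedral symmetry later: it lists $P_3^4$ and $P_4^4$ as distinct and counts $m$-pretzel types as orbits under cyclic shifts and inversion.

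What your construction genuinely proves is that any two orderings of the $p_i$ give mutant links. That is enough for the consequence you draw at the end, provided you argue through mutation rather than isotopy. Mutants have homeomorphic double branched covers $\Sigma_2(L)$. The $\mathbb{Z}_n$-coloring space is $\mathbb{Z}_n\oplus\mathrm{Hom}(H_1(\Sigma_2(L)),\mathbb{Z}_n)$, with $\mathrm{End}(\mathbb{Z}_n)$ acting by affine maps $x\mapsto ax+b$. Hence $|\mathrm{Hom}(\mathcal{Q}(P^m),\mathbb{Z}_n)|$ and $\mathcal{Q}_{\mathbb{Z}_n}(P^m)$ are symmetric in the $p_i$. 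This symmetry of the dihedral invariants, not equivalence of the links, is what the rest of the paper can legitimately use.
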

	\begin{theorem}\label{th2.14}
		For a given pretzel link $(p_{1},p_{2},p_{3})$, its mirror image is $(-p_{1},-p_{2},-p_{3})$.
	\end{theorem}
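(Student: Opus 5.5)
The plan is to realize the mirror image concretely on the standard diagram of Fig.~\ref{t23} with $m=3$, and then read off the crossing counts and signs of each tangle. Take a diagram $D$ of the pretzel link $(p_{1},p_{2},p_{3})$ as the tangle sum $A_{1}+A_{2}+A_{3}$, where $A_{i}$ is a vertical twist region of $|p_{i}|$ crossings whose sign is that of $p_{i}$. The mirror image of the link is represented by the diagram $D^{*}$ obtained from $D$ by switching the over/under information at every crossing. This is equivalent to reflecting $\mathbb{R}^{3}$ in the projection plane, so $D^{*}$ is a diagram of the mirror image.

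Next I check that $D^{*}$ is again a pretzel diagram, with the same underlying planar projection and the same tangle decomposition. Crossing changes are local, so they do not affect the way the tangles are connected. Each $A_{i}^{*}$ is still a twist region of $|p_{i}|$ crossings sitting in the same disk, and the connections $NE\!\to\!NW$ and $SE\!\to\!SW$ between consecutive tangles are unchanged. Hence
\[
D^{*}=A_{1}^{*}+A_{2}^{*}+A_{3}^{*}.
\]

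The main step, and the only real subtlety, is to show that $A_{i}^{*}$ is the twist tangle of parameter $-p_{i}$. The sign convention in the definition of the pretzel link is phrased via positive and negative crossings, and crossing sign depends on orientation, which varies with the parity of the $p_{i}$. I would therefore argue with the orientation-free notion of twist handedness. Inside a twist region, all crossings have the same handedness, because consecutive crossings are related by a half-twist of the two vertical strands. Switching a crossing reverses its handedness. For a fixed orientation, switching a crossing also negates its sign, since the sign is the handedness read against the orientation of the two strands, which is unchanged by the switch.

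So every crossing of $A_{i}$ changes sign when passing to $A_{i}^{*}$. The $|p_{i}|$ crossings of $A_{i}^{*}$ are therefore uniformly of sign $-\operatorname{sign}(p_{i})$, which is exactly the tangle with parameter $-p_{i}$. It follows that $D^{*}$ is the standard diagram of $(-p_{1},-p_{2},-p_{3})$.

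Finally, since $D^{*}$ represents the mirror image of $(p_{1},p_{2},p_{3})$, the mirror image is $(-p_{1},-p_{2},-p_{3})$. The same argument applies verbatim to $m$ tangles, in line with the remark after Theorem~\ref{th2.13}.
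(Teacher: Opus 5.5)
Your proof is correct. The paper gives no proof of this statement at all. It is quoted, together with the definition of pretzel links and Theorem~\ref{th2.13}, from the pretzel-link literature cited just before it, so your argument supplies a justification the paper omits rather than competing with one.

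Your choice of reflection is the efficient one. Reflecting in the projection plane leaves the planar projection, and hence the tangle decomposition, untouched. Reflecting in a vertical line of the plane would instead produce the diagram of $(-p_{3},-p_{2},-p_{1})$, and would then need the reordering symmetry of Theorem~\ref{th2.13}.

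You also observe that a crossing change negates the crossing sign for every fixed orientation, as well as reversing handedness. This makes your conclusion independent of the orientation-dependent phrasing ``$|p_{i}|$ positive crossings'' in the paper's definition. It also makes your closing remark about $m$ tangles valid without any appeal to permutation symmetry.
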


	For a general system of linear congruence equations:
	\begin{equation}
		\begin{cases}
			a_{11}x_{1}+a_{12}x_{2}+...+a_{1n}x_{n}\equiv b_{1}\qquad(mod\ m_{1}),\\
			a_{21}x_{1}+a_{22}x_{2}+...+a_{2n}x_{n}\equiv b_{2}\qquad(mod\ m_{2}),\\
			\qquad \qquad......\\
			a_{s1}x_{1}+a_{s2}x_{2}+...+a_{sn}x_{n}\equiv b_{s}\qquad(mod\ m_{s}).
		\end{cases}
		\label{ty}
	\end{equation}
	Here $a_{ij},b_{i}$ and $m_{i}$ (where $1\leq i\leq s,1\leq j\leq n$) are integers, and $x_{1},x_{2},...,x_{n}$ are $n$ integer-valued variables. Typically, the $m_{i}(1\leq i\leq s)$ are distinct. Let $[a,b]$ denote the least common multiple (LCM) of $a$ and $b$, then let $[m_{1},m_{2},...m_{s}]=m_{1}^{\prime}m_{1}=m_{2}^{\prime}m_{2}=...=m_{s}^{\prime}m_{s}$ represent the LCM of $m_{1},m_{2},...,m_{s}$. Based on the properties of linear congruence equations and \cite{mathews1892}, we can conclude that the system of congruence equations (\ref{ty}) is equivalent to the following system (\ref{ty1}):
	\begin{equation}
		\begin{cases}
			m_{1}^{\prime}a_{11}x_{1}+m_{1}^{\prime}a_{12}x_{2}+...+m_{1}^{\prime}a_{1n}x_{n}\equiv m_{1}^{\prime}b_{1}\\
			m_{2}^{\prime}a_{21}x_{1}+m_{2}^{\prime}a_{22}x_{2}+...+m_{2}^{\prime}a_{2n}x_{n}\equiv m_{2}^{\prime}b_{1}\\
			\qquad \qquad......\\
			m_{s}^{\prime}a_{s1}x_{1}+m_{s}^{\prime}a_{s2}x_{2}+...+m_{s}^{\prime}a_{sn}x_{n}\equiv m_{s}^{\prime}b_{s}
		\end{cases}(mod\ [m_{1},m_{2},...,m_{s}]).
		\label{ty1}
	\end{equation}
	For a system of linear congruence equations where each equation is modulo $m$, we refer to it as a $modulo\ m\ system\ of\ linear\ congruence\ equations$. \par
	\begin{proposition}\textsuperscript{\cite{Wang}}\label{prop}
		Performing the following elementary row operations on a linear congruence system modulo $m$ results in an equivalent system with identical solutions.\par 
		(1)location operation: Interchanges any two rows of the matrix;\par
		(2)elimination operation: Adding a multiple of each elements of one row to the 
		corresponding elements of another row;\par 
		(3)multiple operation: multiplying an integer $k$ to each elements of one row, where $k$
		and the modular $m$ are coprime;\par 
		(4)modulo operation: applying the modulo operation to any entries of the modular matrix, i.e., adding a multiple of $m$ to an element or subtracting a multiple of $m$ from any element of the modular matrix, denoted this operation as $(mod\ m)$.
	\end{proposition}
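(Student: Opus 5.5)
The statement asserts that the four elementary row operations on an augmented matrix $(A\mid b)$ of a modulo $m$ system do not change its solution set in $\mathbb{Z}_m^{n}$. I will show that each operation is invertible by an operation of the same kind, and then prove only one inclusion for each.

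\textbf{Step 1: one inclusion.} Let $x\in\mathbb{Z}^n$ solve the original system, so $R_i(x)\equiv b_i \pmod m$ for every row $i$, where $R_i(x)=\sum_j a_{ij}x_j$.
\begin{romanlist}
\item A location operation only reorders the congruences, so the set of conditions is unchanged.
\item An elimination operation replaces row $i$ by row $i$ plus $t$ times row $k$, with $t\in\mathbb{Z}$ and $k\neq i$. The new $i$-th congruence reads $R_i(x)+tR_k(x)\equiv b_i+tb_k$, and this follows by adding $t$ times the $k$-th congruence to the $i$-th. The other rows are untouched.
\item A multiple operation by $k$ turns $R_i(x)\equiv b_i$ into $kR_i(x)\equiv kb_i$, which holds trivially.
\item A modulo operation changes some $a_{ij}$ or $b_i$ by $tm$. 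Then $R_i(x)$ or $b_i$ changes by $tmx_j$ or $tm$, both $\equiv 0\pmod m$, so the congruence is unaffected.
\end{romanlist}

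\textbf{Step 2: reverse inclusion via inverses.} Location is its own inverse. Elimination by $t$ is undone by elimination by $-t$ with the same pair of rows; the pivot row $k$ is unchanged, so this is legitimate. A modulo operation by $+tm$ is undone by $-tm$. For the multiple operation, $\gcd(k,m)=1$ gives, by B\'ezout, some $k'$ with $kk'\equiv1\pmod m$. Multiplying the new row by $k'$ returns a row that agrees with the original entrywise modulo $m$. A further modulo operation (4) then recovers the original row exactly. Applying Step 1 to these inverse operations gives the reverse inclusion, so the solution sets coincide.

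The main subtlety is the multiple operation. Its inverse is not literally multiplication by $1/k$ but by $k'$ followed by a reduction. This is precisely where coprimality is needed: without it, $kR_i(x)\equiv kb_i$ only yields $R_i(x)\equiv b_i \pmod{m/\gcd(k,m)}$, which can enlarge the solution set. I would add a remark pointing this out. The remaining cases are routine.
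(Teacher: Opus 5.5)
The paper gives no proof of this proposition. It is quoted from \cite{Wang} and used as a black box, so there is no argument in the paper to compare yours with.

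Your proof is correct and complete. The structure is the right one: show that each operation preserves solutions in the forward direction, then undo each operation by a finite chain of operations of the four allowed types. You are right to insist on $k\neq i$ in the elimination operation. With $k=i$ it would amount to multiplying a row by $1+t$, which need not be coprime to $m$. You are also right to act on the augmented matrix $(A\mid b)$. That matters for inhomogeneous systems, even though the paper only ever applies the proposition to homogeneous ones.

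Two small remarks on the multiple operation.

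First, the reverse inclusion can be done in one line by cancellation. If $kR_i(x)\equiv kb_i \pmod m$, then $m\mid k\,(R_i(x)-b_i)$, and $\gcd(k,m)=1$ forces $m\mid R_i(x)-b_i$. This avoids the detour through $k'$ and a further modulo operation, and it makes plain that coprimality is used exactly once.

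Second, if you keep the detour, note two things.
\begin{itemize}
\item Step 1(iii) never uses coprimality, since multiplying a true congruence by any integer keeps it true. So the chain ``multiply by $k'$, then reduce'' is covered by Step 1 whatever $k'$ is.
\item Your opening claim that each operation is inverted by an operation of the same kind is slightly too strong for type (3). Its inverse is a composite of a type (3) operation and type (4) operations. That type (3) step is legitimate because $kk'\equiv 1 \pmod m$ makes $k'$ a unit, so $\gcd(k',m)=1$. You already acknowledge this at the end, so it is only a matter of aligning the first sentence with what you actually prove.
\end{itemize}
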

	\begin{lemma}\label{lem2.16}
		For matrices over $\mathbb{Z}_{n}$, when $n$ is a prime number, it is always possible to reduce them to reduced row echelon form (RREF) through row operations. However, when $n$ is a composite number, they may not necessarily be reducible to RREF via row transformations.
	\end{lemma}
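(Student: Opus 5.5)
The statement has two parts, and I will treat them separately. The first part is the positive claim for prime $n$; the second is the negative claim for composite $n$, which needs only a counterexample.

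\textbf{Prime $n$.} When $n$ is prime, $\mathbb{Z}_{n}$ is a field, so I will run ordinary Gauss--Jordan elimination using only the operations allowed by Proposition \ref{prop}. I will proceed column by column, keeping a current pivot row $r$ (initially $1$).

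For the current column $j$, I look for an entry $a_{ij}\not\equiv 0 \pmod n$ with $i\geq r$. If there is none, I move on to column $j+1$. Otherwise I carry out three steps:
\begin{enumerate}[label=(\roman*)]
\item Use the location operation (1) to swap row $i$ into position $r$.
\item Since $n$ is prime and $a_{ij}\not\equiv 0$, we have $\gcd(a_{ij},n)=1$, so there is an inverse $k$ with $ka_{ij}\equiv 1 \pmod n$. Because $\gcd(k,n)=1$, the multiple operation (3) is permitted, and multiplying row $r$ by $k$ makes the pivot equal to $1$, after reducing with the modulo operation (4).
\item Use the elimination operation (2) to subtract $a_{tj}$ times row $r$ from every other row $t$, above and below. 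This clears column $j$ except for the pivot.
\end{enumerate}
Then I increment $r$ and continue.

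The elimination steps only change entries in columns to the right of, or equal to, the current one. Hence previously created pivots and zeros are preserved, and the process terminates after at most the number of columns steps. The result is in RREF: leading entries are $1$, each pivot column is otherwise zero, pivots move strictly rightward, and zero rows sit at the bottom.

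\textbf{Composite $n$.} Here it suffices to give one matrix that cannot be brought to RREF. Write $n=ab$ with $1<a,b<n$, and take the $1\times 1$ matrix $(a)$ over $\mathbb{Z}_{n}$. I then need to show that no sequence of the allowed row operations turns it into $(1)$ or $(0)$.

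The invariant I will use is the ideal generated by the entries of a column. For a single row, this is $\gcd(\text{entry},n)$. For the given matrix it equals $a$.
\begin{itemize}
\item Location and elimination operations are invertible integer row operations, so they preserve the ideal.
\item Multiplication by a unit $k$ preserves it, since $\gcd(ka,n)=\gcd(a,n)$ when $\gcd(k,n)=1$.
\item The modulo operation preserves it by definition.
\end{itemize}
Since $\gcd(a,n)=a\neq 1,n$, the entry can never become $1$ or $0$. Hence the matrix has no RREF reachable by row operations.

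\textbf{Main obstacle.} The delicate point is pinning down what ``RREF'' means over $\mathbb{Z}_{n}$. I take it to mean that leading entries must equal $1$, and I would state this convention explicitly before giving the argument. The gcd-ideal invariant then makes the counterexample rigorous.
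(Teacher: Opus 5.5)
Your proposal is correct. For prime $n$ it is the same Gauss--Jordan argument the paper gives: find a nonzero entry, swap it up, scale by its inverse, clear the column. You are more careful than the paper here, since you track the pivot row and explain why earlier pivots and skipped columns survive later eliminations.

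The composite case is where you genuinely differ. The paper only observes that pivots need not be units and that other entries need not be multiples of the pivot. That explains why naive elimination can stall, but it does not show that \emph{no} sequence of row operations succeeds. Indeed, a non-unit pivot alone is no obstruction: over $\mathbb{Z}_{6}$ the column $(2,3)^{T}$ becomes $(1,0)^{T}$ via $R_2\mapsto R_2-R_1$, a swap, and $R_2\mapsto R_2-2R_1$.

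Your invariant turns that heuristic into an actual impossibility proof. The ideal of $\mathbb{Z}_{n}$ generated by the entries of a column (equivalently, the $\gcd$ of those entries with $n$) is preserved by all four operations of Proposition \ref{prop}. The $1\times 1$ matrix $(a)$ with $n=ab$, $1<a,b<n$, is then a clean witness.

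The paper's phrasing gives intuition about where elimination breaks. Yours gives rigor and pins down the real obstruction: a first column whose entries generate a proper nonzero ideal can never become zero or contain a $1$, so such a matrix has no RREF. Your stated convention that leading entries must equal $1$ agrees with the paper's implicit one, since its prime-case proof normalizes every pivot to $1$.
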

	\begin{proof}
		When $n$ is a prime number, for any integer $a$, either $\gcd(a,n)=1$ or $a$ is a multiple of $n$. Then for general matrices over $\mathbb{Z}_{n}$:
		\begin{equation}
			A=\begin{bmatrix}
				a_{11}&a_{12}&\cdots &a_{1n}\\
				a_{21}&a_{22}&\cdots &a_{2n}\\
				\vdots&\vdots&\ddots &\vdots\\
				a_{s1}&a_{s2}&\cdots &a_{sn}
			\end{bmatrix}.
			\nonumber
		\end{equation} 
		If $A=0$, it is trivial. If $A\neq 0$, it must contain a element $a_kl\neq 0$, which is also coprime with $n$. \par
		Find the first nonzero element in the first column. If $a_{11}\neq 0$, then $a_{11}$ is the first pivot. If $a_{11}=0$ and there exists some $a_{k1}\neq 0,k=2,...,s$, then swap the first row with the $k$-th row, making $a_{k1}$ the pivot. If no such nonzero element exists in the first column, we take no action and proceed to the next column for further operations.Without loss of generality, assume $a_{11}\neq 0$, then $a_{11}$ is a unit (invertible element) in $\mathbb{Z}_{n}$, i.e., there exists $a_{11}^{-1}\in \mathbb{Z}_{n}$ such that $a_{11}a_{11}^{-1}=1$. Clearly, $a_{11}^{-1}$ is also a unit (invertible element) in $\mathbb{Z}_{n}$. By multiplying every element of the first row by $a_{11}^{-1}$, the pivot can be normalized. If $a_{21}\neq 0$, then subtract $a_{21}$ times each element of the first row from the corresponding element of this row, and proceed analogously to eliminate all elements below the pivot in the first column.\par 
		For the second column: If there exists a nonzero element below $a_{12}$, proceed as follows (assuming without loss of generality $a_{22}\neq 0$). Multiply every element of the second row by $a_{22}^{-1}$ to normalize its pivot entry to 1. Then, using analogous row operations, eliminate all other entries in the second column.If no such nonzero element exists in the second column, we take no action and proceed to the next column for further operations.\par 
		This process is repeated iteratively to transform the entire matrix into RREF via elementary row operations.\par 
		However, when $n$ is composite: Pivot elements may not be invertible (since nonzero elements in $\mathbb{Z}_{n}$ need not be units). Elements in the same column may not be exact multiples of the pivot (due to zero divisors).
		Thus, the matrix cannot always be reduced to RREF.
	\end{proof}
	Let the coefficient matrix of the congruence equation system be $A=\begin{bmatrix}
		a_{11}&a_{12}&\cdots &a_{1n}\\
		a_{21}&a_{22}&\cdots &a_{2n}\\
		\vdots &\vdots &\ddots &\vdots\\
		a_{s1}&a_{s2} &\cdots &a_{sn}
	\end{bmatrix}$. Let $b=\begin{bmatrix}
		b_{1}\\
		b_{2}\\
		\vdots\\
		b_{s}
	\end{bmatrix}$ and $x=\begin{bmatrix}
		x_{1}\\
		x_{2}\\
		\vdots\\
		x_{n}
	\end{bmatrix}$, then the system of linear congruences modulo $m$ can then be expressed as:
	\begin{equation}
		Ax\equiv b\ (mod\ m).
		\label{2.1}
	\end{equation}
	\begin{theorem}\label{th2.17}\textsuperscript{\cite{Wang}}
		Let $A$ be an $s\times n$ matrix modulo $m$, and let $P$ and $Q$ be invertible matrices of sizes $s\times s$ and $n\times n$, respectively. Then, the congruence system (\ref{2.1}) is equivalent to the congruence system (\ref{2.2}):
		\begin{equation}
			PAQy\equiv Pb\ (mod\ m),
			\label{2.2}
		\end{equation}
		where $y=\begin{bmatrix}
			y_{1}\\
			y_{2}\\
			\vdots\\
			y_{n}
		\end{bmatrix}$. When the congruence system has solutions, the relationship between the solutions of the two systems is given by:
		\begin{equation}
			x=Qy\ (mod\ m).
			\nonumber
		\end{equation}
	\end{theorem}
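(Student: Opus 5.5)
To prove Theorem~\ref{th2.17}, I will show that the map $y\mapsto Qy$ is a bijection from the solution set of (\ref{2.2}) onto the solution set of (\ref{2.1}). Throughout, ``invertible'' means invertible over $\mathbb{Z}_{m}$: there are integer matrices $P^{-1}$ and $Q^{-1}$ with $PP^{-1}\equiv P^{-1}P\equiv I_{s}$ and $QQ^{-1}\equiv Q^{-1}Q\equiv I_{n}\pmod m$. Equivalently, $\det P$ and $\det Q$ are coprime to $m$, and the inverses are given by the adjugate times $(\det)^{-1}$ mod $m$. All matrix identities below are read modulo $m$, using only that congruence mod $m$ respects addition and multiplication of integer matrices.

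\emph{Step 1 (left multiplication by $P$).} I will first show that $Ax\equiv b$ and $PAx\equiv Pb \pmod m$ have the same solutions $x$. If $Ax\equiv b$, multiplying on the left by $P$ gives $PAx\equiv Pb$. Conversely, if $PAx\equiv Pb$, multiplying by $P^{-1}$ gives $Ax\equiv P^{-1}PAx\equiv P^{-1}Pb\equiv b$. This is the matrix form of Proposition~\ref{prop}: any $P$ that is a product of elementary row operations of the types listed there is invertible mod $m$, but the argument does not need such a factorization.

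\emph{Step 2 (change of variables by $Q$).} Next I will relate $PAx\equiv Pb$ to $PAQy\equiv Pb$. If $y$ solves (\ref{2.2}), set $x=Qy$; then $PAx\equiv PAQy\equiv Pb$, so $x$ solves the system of Step 1 and hence (\ref{2.1}). Conversely, if $x$ solves (\ref{2.1}), set $y=Q^{-1}x$; then $Qy\equiv x$, so $PAQy\equiv PAx\equiv Pb$ and $y$ solves (\ref{2.2}). The two maps $y\mapsto Qy$ and $x\mapsto Q^{-1}x$ are mutually inverse on $\mathbb{Z}_{m}^{n}$. They therefore give a bijection between the two solution sets, with $x\equiv Qy \pmod m$. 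In particular one system is solvable if and only if the other is, and the systems have equally many solutions mod $m$, which is what the later counting of colorings will rely on.

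The main point needing care is Step 2's converse direction and the meaning of ``invertible''. When $m$ is composite, $\det Q\neq 0$ is not enough: we need $\gcd(\det Q,m)=1$, as Lemma~\ref{lem2.16} warns about non-units. So I will state explicitly that invertibility is over $\mathbb{Z}_{m}$. I will also note that $Q^{-1}$ acts on residue vectors mod $m$, so the solution correspondence is between residue classes mod $m$ rather than between integer vectors.
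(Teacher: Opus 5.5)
Your proof is correct and complete. Left-multiplying by $P^{-1}$ shows that $Ax\equiv b$ and $PAx\equiv Pb$ have the same solutions. The maps $y\mapsto Qy$ and $x\mapsto Q^{-1}x$ are then mutually inverse bijections between the two solution sets, which gives solvability of one system iff the other and equal solution counts.

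The paper does not prove this statement itself; it quotes it from Wang. So there is no argument of the paper's to compare against, and yours is the standard direct proof. Your insistence that ``invertible'' means $\gcd(\det Q,m)=1$ is exactly the check the paper performs when it applies the result, with $\det C=-1$, $\det B=-1$ and $\det D=1$.

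One small detail on those applications: the paper defines the new variables as $C$ times the old ones. So the matrix playing the role of your $Q$ is $C^{-1}$, which is again invertible mod $n$.
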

	
	\begin{theorem}\textsuperscript{\cite{Florentin}}
		The linear congruence equation $a_{1}x_{1}+...+a_{n}x_{n}\equiv b\ (mod\ m)$ has a solution if and only if $(a_{1},...,a_{n},m)|b$.
	\end{theorem}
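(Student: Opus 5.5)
The plan is to reduce the single congruence to a statement about the ideal $d\mathbb{Z}$, where $d=(a_{1},\dots,a_{n},m)$. Throughout I use that a solution of $a_{1}x_{1}+\cdots+a_{n}x_{n}\equiv b\ (mod\ m)$ is the same as integers $x_{1},\dots,x_{n},t$ with
\[
a_{1}x_{1}+\cdots+a_{n}x_{n}+mt=b .
\]
So the congruence is solvable exactly when $b$ lies in the set $I=\{a_{1}x_{1}+\cdots+a_{n}x_{n}+mt : x_{i},t\in\mathbb{Z}\}$. It therefore suffices to show $I=d\mathbb{Z}$.

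\textbf{Necessity.} Suppose a solution exists. Since $d$ divides each $a_{i}$ and divides $m$, it divides the left-hand side $a_{1}x_{1}+\cdots+a_{n}x_{n}+mt$. Hence $d\mid b$. This direction is immediate.

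\textbf{Sufficiency.} I first show $d\in I$ by induction on the number of integers involved, using Bézout's identity. For two integers $u,v$ there exist $r,s$ with $ru+sv=(u,v)$. Inductively, I write $(a_{1},\dots,a_{k})$ as an integer combination of $a_{1},\dots,a_{k}$. Then $(a_{1},\dots,a_{k+1})=((a_{1},\dots,a_{k}),a_{k+1})$ is a combination of $(a_{1},\dots,a_{k})$ and $a_{k+1}$, hence of $a_{1},\dots,a_{k+1}$. Applying this to the $n+1$ integers $a_{1},\dots,a_{n},m$ gives integers $y_{1},\dots,y_{n},s$ with
\[
a_{1}y_{1}+\cdots+a_{n}y_{n}+ms=d .
\]
Now assume $d\mid b$ and write $b=dc$. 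Multiplying the identity by $c$ and setting $x_{i}=cy_{i}$ gives $a_{1}x_{1}+\cdots+a_{n}x_{n}\equiv dc=b\ (mod\ m)$. Thus $(x_{1},\dots,x_{n})$ is a solution.

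The only step requiring care is the inductive Bézout claim, specifically the identity $(a_{1},\dots,a_{k+1})=((a_{1},\dots,a_{k}),a_{k+1})$. I would justify it by noting that the two sides have the same set of common divisors. Degenerate cases, such as all $a_{i}=0$, are covered because $d=m$ and the congruence then reads $0\equiv b\ (mod\ m)$. An alternative route uses Theorem \ref{th2.17}: reduce the $1\times n$ row to Smith form with a unimodular $Q$, which turns the equation into $g\,y_{1}\equiv b\ (mod\ m)$ with $g=(a_{1},\dots,a_{n})$, and finish with the one-variable criterion $(g,m)\mid b$. The Bézout argument is more direct, so I would follow that.
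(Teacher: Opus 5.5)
Your proof is correct and complete. The paper does not prove this statement: it quotes it from Smarandache's book without argument, so there is no proof in the paper to compare your route with. What you give is the standard self-contained proof. You rewrite the congruence as the Diophantine equation $a_{1}x_{1}+\cdots+a_{n}x_{n}+mt=b$. The set $I$ of values of the left side lies in $d\mathbb{Z}$ trivially, and it contains $d=(a_{1},\dots,a_{n},m)$ by iterated B\'ezout, so $I=d\mathbb{Z}$.

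Your separate treatment of the case where all $a_{i}=0$ is harmless but unnecessary. The B\'ezout argument already covers it, since $(0,\dots,0,m)=|m|$ is trivially an integer combination of the entries.

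You are right to avoid the alternative route through Theorem~\ref{th2.17}. Building the unimodular $Q$ that sends the row to $(g,0,\dots,0)$ is itself a Euclidean-algorithm argument. The one-variable criterion $(g,m)\mid b$ is again B\'ezout. That route only restates the same fact in the matrix language the paper uses elsewhere, at extra cost.

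Your ideal formulation also gives the paper's next cited result, Theorem~\ref{th2.19}. For $m\neq 0$, the map $\mathbb{Z}_{m}^{n}\to\mathbb{Z}_{m}$, $x\mapsto\sum a_{i}x_{i}$, has image $d\mathbb{Z}/m\mathbb{Z}$ of size $|m|/d$. Each nonempty fiber is a coset of the kernel, so it has $|m|^{n}/(|m|/d)=d|m|^{n-1}$ elements.
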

	\begin{theorem}\textsuperscript{\cite{Florentin}}\label{th2.19}
		The linear congruence equation $a_{1}x_{1}+...+a_{n}x_{n}\equiv b\ (mod\ m)$ has $d|m|^{n-1}$ distinct solutions modulo $m$ when $m\neq 0,\ \gcd(a_{1},...,a_{n},m)=d|b$.
	\end{theorem}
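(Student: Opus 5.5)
The plan is to reduce the equation to a single congruence in one variable and then count the free choices of the others. Set $d=\gcd(a_{1},\dots,a_{n},m)$ and assume $d\mid b$ with $m\neq 0$; replacing $m$ by $|m|$ changes nothing, so take $m>0$. The count will come from two ingredients. First, by B\'ezout, $\gcd(a_{1},\dots,a_{n},m)$ is the smallest positive value of $a_{1}x_{1}+\dots+a_{n}x_{n}+mt$. Second, for fixed $x_{2},\dots,x_{n}$, the number of residues $x_{1}$ solving $a_{1}x_{1}\equiv c \pmod m$ is $\gcd(a_{1},m)$ if $\gcd(a_{1},m)\mid c$ and $0$ otherwise.

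I would argue by induction on $n$. For $n=1$, the one-variable fact above gives exactly $d=d\,m^{0}$ solutions. For the inductive step, put $d'=\gcd(a_{1},m)$ and consider the one-variable equation
\begin{equation}
a_{1}x_{1}\equiv b-a_{2}x_{2}-\dots-a_{n}x_{n}\pmod m .
\nonumber
\end{equation}
It is solvable exactly when
\begin{equation}
a_{2}x_{2}+\dots+a_{n}x_{n}\equiv b\pmod{d'},
\nonumber
\end{equation}
and in that case it has $d'$ solutions $x_{1}$. So the total count is $d'$ times the number of tuples $(x_{2},\dots,x_{n})$ modulo $m$ satisfying this congruence modulo $d'$.

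Next, count those tuples. Since $d'\mid m$, each residue class modulo $d'$ lifts to $(m/d')^{n-1}$ tuples modulo $m$. By the induction hypothesis applied modulo $d'$, the reduced equation has $\gcd(a_{2},\dots,a_{n},d')\,d'^{\,n-2}$ solutions. Its solvability condition $\gcd(a_{2},\dots,a_{n},d')\mid b$ holds because $\gcd(a_{2},\dots,a_{n},d')=\gcd(a_{1},\dots,a_{n},m)=d$. Multiplying the three factors gives
\begin{equation}
d'\cdot d\,d'^{\,n-2}\cdot (m/d')^{n-1}=d\,m^{n-1},
\nonumber
\end{equation}
which is the claimed count. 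One edge case needs attention: if $d'=1$, the reduction is to a congruence modulo $1$, which holds trivially. The formula still applies there, with $d=1$ and all $m^{n-1}$ tuples counted.

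The main obstacle is the lifting step. I need to check that the solution count of the reduced congruence modulo $d'$ really multiplies by $(m/d')^{n-1}$ when passing to residues modulo $m$. I also need the gcd identity $\gcd(a_{2},\dots,a_{n},\gcd(a_{1},m))=d$. Both follow from the Chinese-remainder-free fact that the reduction map $\mathbb{Z}_{m}^{n-1}\to\mathbb{Z}_{d'}^{n-1}$ is surjective with fibres of equal size $(m/d')^{n-1}$.
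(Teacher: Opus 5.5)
Your induction is correct. Conditioning on $(x_2,\dots,x_n)$ gives the factor $d'=\gcd(a_1,m)$. The solvability condition depends only on $(x_2,\dots,x_n) \bmod d'$, and the reduction $\mathbb{Z}_m^{n-1}\to\mathbb{Z}_{d'}^{n-1}$ has fibres of size $(m/d')^{n-1}$. The induction hypothesis then contributes $d\,d'^{\,n-2}$, so the product is $d\,m^{n-1}$. You should state the hypothesis as holding for every positive modulus, since you apply it with modulus $d'$ rather than $m$.

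The paper gives no argument for this statement; it simply quotes it from Smarandache. Your proof is therefore a self-contained substitute rather than a variant of anything in the text.

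Two small corrections. First, the Bézout remark in your opening paragraph is never used in the induction. Second, the identity $\gcd(a_2,\dots,a_n,\gcd(a_1,m))=\gcd(a_1,\dots,a_n,m)$ is just associativity of $\gcd$. It does not follow from the fibre count, as your last sentence suggests.

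For comparison, that unused Bézout fact is exactly what powers a shorter route. The map $\mathbb{Z}_m^n\to\mathbb{Z}_m$, $x\mapsto\sum a_ix_i$, is a group homomorphism. Its image is the subgroup generated by the $a_i$, which by Bézout is $d\,\mathbb{Z}_m$, of order $m/d$. Hence the kernel has $m^n/(m/d)=d\,m^{n-1}$ elements, and the solution set is a coset of the kernel precisely when $b\in d\,\mathbb{Z}_m$, i.e. when $d\mid b$.

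That argument yields the solvability criterion (the theorem just before this one in the paper) and the count at the same time. Your induction is more elementary: it needs only the one-variable case, which is also the only case the paper actually uses later, in the proof of its quiver theorem for $P_1^3$.
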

	
	
	\section{Quandle coloring spaces of pretzel links}
	To simplify the expression, the following linear congruent equations and systems of linear congruent equations are all modulo $n$ unless otherwise specified.

	\subsection{Quandle coloring spaces of 3-pretzel links}
	By Theorem \ref{th2.13} and Theorem \ref{th2.14}, there are only two forms of the classic pretzel links, i.e., $P_1^1=(p_ {1}, p_{2},p_{3})$ and $P_2^3=(p_{1}, p_{2},-p_{3})$, where $p_{1},p_{2},p_{3}\in \mathbb{Z}^{+}$. Let's start by study the numbers of coloring of the first form of the three-pronged pretzel links.\par 
	From Lemma \ref{lem2.16}, it can be known that for the coefficient matrix of a general linear congruence system of equations, it is very likely that it cannot be reduced to a RREF, and thus the explicit solution of the linear congruence system of equations cannot be obtained. For a linear congruent system of equations composed of two ternary equations, at least one of them must be a free element. Therefore, we can perform variable substitution, that is, subtract the free element from the remaining unknowns and study it as a new variable.
	\begin{theorem}\label{th3}
		Given an oriented pretzel link $P_1^3=(p_{1},p_{2},p_{3})$ and a dihedral quandle $\mathbb{Z}_{n}$, where $p_{1},p_{2},p_{3},n\in \mathbb{Z}^{+}$. Let $P=p_1p_2+p_1p_3+p_2p_3,Q=\gcd(p_{1},p_{2},p_3),D_{1}=\gcd(Q,n),D_2=\gcd(\frac{|P|}{Q},n)$. Then the coloring number of $P_1^3$ is
		\begin{equation}
			|Hom(P_1^3,\mathbb{Z}_n)|=
			\begin{cases}
				n\qquad \ \qquad \gcd(P,n)=1;\\
				D_1D_2n\qquad others.
			\end{cases}
			\nonumber
		\end{equation}
	\end{theorem}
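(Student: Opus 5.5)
The plan is to turn the coloring condition into a homogeneous linear congruence system in a few ``difference'' variables, then count its solutions by putting the coefficient matrix into diagonal (Smith) form with invertible integer row and column operations, as allowed by Proposition \ref{prop} and Theorem \ref{th2.17}.

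\textbf{Step 1: coloring equations.} Label by $x_1,x_2,x_3$ the three arcs at the top of the diagram of $P_1^3$, where $x_i$ and $x_{i+1}$ (indices mod $3$) are the two strands entering the twist tangle $A_i$. In a vertical twist region colored by the dihedral quandle, each crossing sends $(u,v)\mapsto(v,2v-u)$. So the colors along the region form an arithmetic progression with common difference $d_i=x_{i+1}-x_i$, and by induction on $|p_i|$ the two bottom arcs of $A_i$ are $x_i+p_id_i$ and $x_{i+1}+p_id_i$. The bottom arcs are shared by consecutive tangles. Matching them gives the conditions
\begin{equation}
d_1+d_2+d_3\equiv 0,\qquad p_1d_1\equiv p_2d_2\equiv p_3d_3 \pmod n .
\nonumber
\end{equation}
The first condition is automatic from $d_i=x_{i+1}-x_i$. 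The coloring is then determined by $x_1\in\mathbb{Z}_n$ (a free variable) together with $(d_1,d_2)$, so
\[
|Hom(P_1^3,\mathbb{Z}_n)|=n\cdot N,
\]
where $N$ is the number of solutions in $\mathbb{Z}_n^2$ of the system obtained by substituting $d_3=-d_1-d_2$:
\begin{equation}
\begin{bmatrix} p_1 & -p_2\\ p_3 & p_2+p_3\end{bmatrix}\begin{bmatrix} d_1\\ d_2\end{bmatrix}\equiv 0 \pmod n .
\nonumber
\end{equation}
This is precisely the ``subtract the free element'' substitution announced before the theorem.

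\textbf{Step 2: diagonalize.} Call the matrix above $M$. Its determinant is $p_1(p_2+p_3)+p_2p_3=P$. Its entries have gcd $\gcd(p_1,p_2,p_3,p_2+p_3)=Q$. By Smith normal form over $\mathbb{Z}$ there are unimodular integer matrices $U,V$ with
\[
UMV=\mathrm{diag}(Q,P/Q).
\]
Their reductions mod $n$ are invertible, so by Theorem \ref{th2.17} the system is equivalent to the two decoupled congruences $Qy_1\equiv 0$ and $(P/Q)y_2\equiv 0$, and the change of variables $d=Vy$ is a bijection on $\mathbb{Z}_n^2$. By Theorem \ref{th2.19}, these two congruences have $\gcd(Q,n)=D_1$ and $\gcd(|P|/Q,n)=D_2$ solutions respectively. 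Hence $N=D_1D_2$ and the total is $D_1D_2n$. When $\gcd(P,n)=1$, both $D_1$ and $D_2$ divide $\gcd(P,n)$, so both equal $1$ and the count is $n$; this is just the special case listed separately in the statement.

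\textbf{Main obstacle.} I expect the main difficulty to be Step 1: setting up the arc labels and the orientation and crossing conventions of Fig.\ref{t1} so that the tangle-sum gluing really yields exactly these equations. One must check that no extra independent relation appears, namely that the relation from the last tangle closing up is implied by the others. Since the dihedral operation is an involution, orientations do not affect the relations, which simplifies this check.

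A secondary point is the Smith form claim that the second invariant factor equals $P/Q$. This follows because the product of the invariant factors is $|\det M|$ and the first invariant factor is the gcd of the entries. The identity $\gcd(p_1,p_2,p_3,p_2+p_3)=\gcd(p_1,p_2,p_3)$ is immediate.
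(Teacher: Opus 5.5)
Your proposal is correct and follows essentially the same route as the paper: both reduce the coloring conditions to a $2\times 2$ homogeneous system in difference variables (the paper uses $x=a-b$, $y=c-b$). Both then read off the Smith invariant factors $Q$ and $|P|/Q$ from the gcd of the entries and the determinant, and count $\gcd(Q,n)\gcd(|P|/Q,n)$ solutions times $n$ for the free variable. Your arithmetic-progression derivation of the symmetric conditions $p_1d_1\equiv p_2d_2\equiv p_3d_3$ is a cleaner way to set up the system than the paper's arc-by-arc computation. Your matrix has the same determinant up to sign and the same content as the paper's, so the counting argument is identical.
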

	\begin{proof}
		Given an oriented pretzel link $P_1^3=(p_{1},p_{2},p_{3})$, as shown in below.
		\begin{figure}[H]
			\centering
			\includegraphics[width=0.4\textwidth,height=0.39\textwidth]{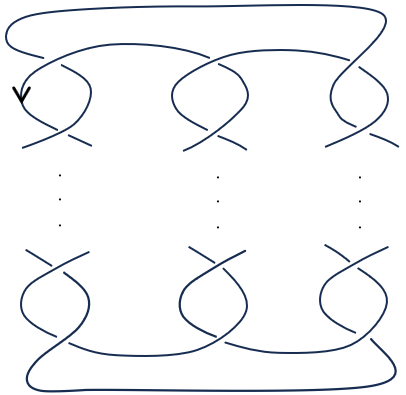}
			\caption{$P_1^3$}
			\label{t123}
		\end{figure}\par
		In Fig.\ref{t123}, there are many arcs without marked directions. Although different directions of those unmarked arcs will result in different fundamental quandles, since the binary operation of the dihedral quandle takes itself as the inverse, the direction of the arc will not change the dihedral quandle of $P_1^3$.\par 
		Take any orientation of $P_1^3$, and let its fundamental quandle be $\mathcal{Q}(P_1^3)$, for any $f\in Hom(\mathcal{Q}(P_1^3),\mathbb{Z}_{n})$ satisfying that for any $x,y\in  \mathcal{Q}(P_1^3),f(x\triangleright y)=f(x)\triangleright f(y)=2f(y)-f(x)$. Next, let's examine the image of $f$. First, we set the coloring condition of the arc in the upper left corner as $a$, and the coloring condition of the arc below it as $b$. Then, according to the operation rule of the dihedral quandle, the coloring condition of the arc below $a$ and $b$ is $a\triangleright b=2b-a$. Consequently, the arcs in the first column can be successively colored as $a,b,2b-a,3b-2a,...,p_{1}b-(p_{1}-1)a,(p_{1}+1)b-p_{1}a$. Suppose the arc linking columns two and three below is colored by $c$. This fixes the colors of all arcs in the third column:$p_{1}b-(p_{1}-1)a,c,2c-p_{1}b+(p_{1}-1)a,...,p_{3}c-(p_{3}-1)p_{1}b+(p_{3}-1)(p_{1}-1)a,(p_{3}+1)c-p_{3}p_{1}b+p_{3}(p_{1}-1)a$. Since each arc can be assigned only one color, the coloring condition of the topmost arc implies:
		\begin{equation}
			p_{3}c-(p_{3}-1)p_{1}b+(p_{3}-1)(p_{1}-1)a\equiv a.
			\label{el1}
		\end{equation}
		The coloring condition of the arcs connecting the second and third columns above is as follows:
		\begin{equation}
			(p_{3}+1)c-p_{3}p_{1}b+p_{3}(p_{1}-1)a\equiv c+p_{1}(a-b).
			\nonumber
		\end{equation}
		Perform coloring on the intermediate arcs, similarly, we have:
		\begin{equation}
			\begin{cases}
				p_{2}c+p_{2}p_{1}(a-b)-(p_{2}-1)b\equiv (p_{1}+1)b-p_{1}a, \\
				(p_{2}+1)c+(p_{2}+1)p_{1}(a-b)-p_{2}b\equiv c.
			\end{cases}
			\label{3.2}
		\end{equation}
		The coloring of the entire $P_1^3$ using the dihedral quandle $\mathbb{Z}_n$ is illustrated as follows:
		\begin{figure}[H]
			\centering
			\includegraphics[width=1\textwidth,height=0.43\textwidth]{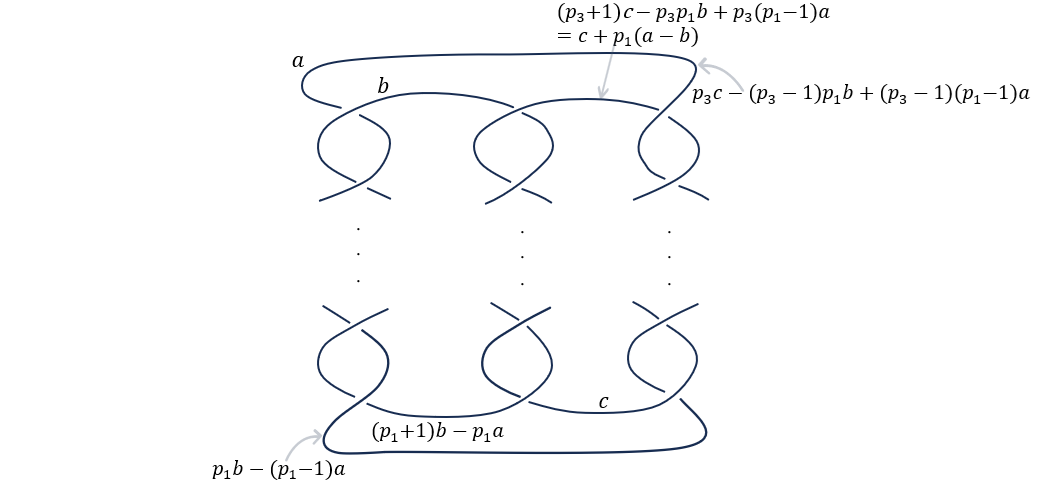}
			\caption{Coloring diagram of $P_1^3$ using the dihedral quandle $\mathbb{Z}_{n}$}
		\end{figure}\par  
		Combining these results, we conclude that the image of $f$ consists of linear combinations of the three elements $a,b$, and $c$. Let $f(x_{1})=a,f(x_{2})=b,f(x_{3})=c,x_{1},x_{2},x_{3}\in \mathcal{Q}(P_1^3)$, and $a,b,c\in \mathbb{Z}_{n}$. By simplifying equations (\ref{el1}) and (\ref{3.2}), we obtain a system of linear congruence equations describing the coloring conditions of $P_1^3$:
		\begin{equation}
			\begin{cases}
				p_{1}(p_{3}-1)(a-b)+p_{3}(c-a)\equiv 0, \qquad (1a)\\
				p_{1}(p_{2}+1)(a-b)+p_{2}(c-b)\equiv 0. \qquad \, (1b)
			\end{cases} 
			\label{el}
		\end{equation}
		We call this the $P_1^3$ coloring system, and now proceed to find its solutions.\par 
		
		We set
			\begin{equation}
				\begin{cases}
					a-b=x,\\
					c-b=y,\\
					c=z.
				\end{cases} 
				\nonumber 
			\end{equation}
			Expressed in matrix form:
			\begin{equation}
				\left[
				\begin{array}{c}
					x \\
					y \\
					z
				\end{array}
				\right]
				=
				\left[
				\begin{array}{ccc}
					1 & -1 &0 \\
					0 &-1 &1 \\
					0& 0 &1
				\end{array}
				\right]
				\left[
				\begin{array}{c}
					a \\ 
					b \\
					c
				\end{array}
				\right]
				=
				C
				\left[
				\begin{array}{c}
					a \\ 
					b \\
					c
				\end{array}
				\right].
				\nonumber 
			\end{equation}
			Since $det(C)=-1$, it follows from \cite{Stein2017} that $\gcd(-1,n)=\gcd(1,n)=1$. Therefore, the matrix $C$ is invertible, and the system of equations (\ref{el}) is equivalent to the system (\ref{el2}):
			\begin{equation}
				\begin{cases}
					(p_{1}p_{3}-p_1-p_3)x+p_{3}y\equiv 0, \quad  \\
					p_{1}(p_{2}+1)x+p_{2}y\equiv 0. \quad 
				\end{cases} 
				\label{el2}
			\end{equation}
			Expressed in matrix form:
			\begin{equation}
				\left[
				\begin{array}{cc}
					p_1p_3-p_1-p_3& p_3 \\
					p_1p_2+p_1& p_2
				\end{array}
				\right]
				\left[
				\begin{array}{c}
					x \\ 
					y
				\end{array}
				\right]
				\equiv
				A\mathbf{x}
				\equiv
				\mathbf{0}\pmod{n}.
				\label{eq5}
			\end{equation}\par 
			If the coefficient matrix $A$ is invertible, i.e., $\gcd(p_1p_2+p_1p_3+p_2p_3,n)=1$, then the homogeneous system has only the trivial solution, which implies $a\equiv b\equiv c$. In this case, only the trivial coloring exists, resulting in $|Hom(P_1^3,\mathbb{Z}_{n})|=n$.\par 
			If $\gcd(p_1p_2+p_1p_3+p_2p_3,n)=d\neq 1$, based on the prime factorization of the integer, there exist prime numbers $q_1, q_2,...q_s$ and integer $r_1,r_1,...,r_s$ such that $d = q_1^{r_1} q_2^{r_2}...q_s^{r_s}.(s\in \mathbb{Z}\ and\ s<n)$.\par
			We consider the system of equations (\ref{el2}) as a system of linear equations over the integral domain $\mathbb{Z}$. Over $\mathbb{Z}$, we factor the coefficient matrix $A$ as $A=UDV$, where: $U$ and $V$ are invertible integer square matrices (i.e., with determinant $\pm 1$), and D is the diagonal matrix $(D = \operatorname{diag}(d_1, d_2))$, which is the Smith normal form of matrix $A$, satisfying $d_1 \mid d_2$.\par
			The original equation $A \mathbf{x} \equiv \mathbf{0} \pmod{n}$ is equivalent to:  
			
			\begin{equation}
				UDV \mathbf{x} \equiv \mathbf{0} \pmod{n},
				\nonumber
			\end{equation}
			
			Since $U$ and $V$ are invertible over $\mathbb{Z}_n$, i.e., $|U|=|V|=±1$, taking determinants on both sides of $A=UDV$ gives:
			\begin{equation}
				(p_1p_2+p_1p_3+p_2p_3)=\pm d_1d_2.
			\end{equation}
			Then the problem reduces to how to solve for $d_1$.\par
			On the one hand, if we consider the prime factorization of D, then we have:
			\begin{itemize}
				\item[(1)] If $p_1p_2+p_1p_3+p_2p_3=q_1q_2$, and $q_1=1$ or $q_1$ is a prime number, $q_2$ is a prime number, then (\ref{el2}) is equivalent to
				\begin{equation}
					\begin{cases}
						x\equiv 0,\\
						d_1y\equiv 0.	
					\end{cases}
				\end{equation}
				It is easy to know that at this point we have $|Hom(P_1^3,\mathbb{Z}_n)|=dn$.
				\item[(2)] If $p_1p_2+p_1p_3+p_2p_3=q_1^2q_2...q_s$, where $q_1,q_2$ are prime numbers, then it is easy to know $d_1=q_1,\ d_2=q_1q_2...q_s$. Let $D_1=\gcd(d_1,n),\ D_2=\gcd(d_2,n)$, then (\ref{el2}) is equivalent to\par
				\begin{equation}
					\begin{cases}
						D_1x\equiv 0,\\
						D_2y\equiv 0.	
					\end{cases}
				\end{equation}
				It is easy to know that at this point we have $|Hom(P_1^3,\mathbb{Z}_n)|=D_1D_2n$.
			\end{itemize}
			On the other hand, according to the determinant divisor theory of the Smith normal form, we have the following two core relations:  \par
			\begin{itemize}
				\item[(1)] First-order determinant divisor $\Delta_1$ is the greatest common divisor of all $1\times1$ minors (i.e., all entries) of matrix $A$, and $d_1=\Delta_1$.  \par
				\item[(2)] Second-order determinant divisor $\Delta_2$ is the greatest common divisor of all $2\times2$ minors (i.e., determinants) of matrix $A$, and $\Delta_2=det(A)$.
			\end{itemize}
			It is easy to know $d_1=\gcd(p_1p_3-p_1-p_3,p_3,p_1(p_2+1),p_2)=\gcd(p_1,p_2,p_3)$, then we can get $$d_2=\frac{|p_1p_2+p_1p_3+p_2p_3|}{\gcd(p_1,p_2,p_3)}.$$
			Let $D_1=\gcd(d_1,n),D_2=\gcd(d_2,n)$, then we have $|Hom(P_1^3,\mathbb{Z}_n)|=D_1D_2n$.
			
	\end{proof}\par

	Compared with the large matrix computations in \cite{Ostrander}, it can be observed that quandle colorings are computationally simpler to work with. Moreover, quandle colorings constitutes a stronger invariant than $p$-coloring.
	\begin{example}\label{ex1}
		Given an oriented pretzel link $P_{246}=(2,4,6)$ and a dihedral quandle $\mathbb{Z}_{n}$. We can get $d_1=\gcd(p_1,p_2,p_3)=2,d_2=22$. When $n=4$, the quandle coloring number of $P_{246}$ is $2\times 2\times n=16$. When $n=8$, the quandle coloring number of $P_{246}$ is $2\times 6\times n=96$.
		\begin{proof}
			When $n=4$, substituting $p_{1}=2,p_{2}=4,p_{3}=6$ into the congruence system (\ref{el}) and simplifying yields:
			\begin{equation}
				\begin{cases}
					2(c-b)\equiv 0,\\
					2(a-c)\equiv 0.
				\end{cases}
				\nonumber
			\end{equation}
			
			It is straightforward to verify that in this case, the coloring number of $P_{246}$, $|Hom(P_{246},\mathbb{Z}_{4})|=2\times2\times n=16$.\par 
			When $n=8$, substituting $p_{1}=2,p_{2}=4,p_{3}=6$ into the congruence system (\ref{el}) and simplifying yields:
			\begin{equation}
				\begin{cases}
					2(b-c)\equiv 0,\\
					2(a-c)\equiv 0.
				\end{cases}
				\nonumber
			\end{equation}\par
				Clearly, in this case, the coloring number of $P_{246}$ is $|Hom(P_{246},\mathbb{Z}_{8})|=2\times 2\times n=32$.\par
		\end{proof}
	\end{example}
	\begin{corollary}
		Given an oriented pretzel link $P_2^3=(p_{1},p_{2},-p_{3})$ and a dihedral quandle $\mathbb{Z}_{n}$, where $p_{1},p_{2},p_{3},n\in \mathbb{Z}^{+}$. Let $P=p_1p_2-p_1p_3-p_2p_3,Q=\gcd(p_{1},p_{2},p_3),D_{1}=\gcd(Q,n),D_2=\gcd(\frac{|P|}{Q},n)$. Then the coloring number of $P_2^3$ is
		\begin{equation}
			|Hom(P_2^3,\mathbb{Z}_n)|=
			\begin{cases}
				n\qquad \ \qquad \gcd(P,n)=1;\\
				D_1D_2n\qquad others.
			\end{cases}
			\nonumber
		\end{equation}
	\end{corollary}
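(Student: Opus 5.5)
The plan is to reduce the corollary to Theorem \ref{th3} by a substitution rather than redo the coloring computation from scratch. The coloring system (\ref{el}) for $P_1^3$ was derived purely from the dihedral relation $x\rhd y=2y-x$ applied successively down each column of crossings. In the third column of $P_2^3$ the crossings are negative, i.e. the over/under information is swapped. Because the dihedral operation is its own inverse, the only effect is on which arcs play the role of the over-arc, which amounts to traversing the twist region in the opposite sense. I would therefore recolor the third column of $P_2^3$ exactly as in the proof of Theorem \ref{th3}, starting from the same labels $a,b,c$. The goal is to check that the arc colors are again $k c-(k-1)(\cdot)$ with $p_3$ replaced by $-p_3$. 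This is the standard fact that a twist region of $-p$ crossings acts on the pair of boundary colors by the inverse (the $(-p)$-th power) of the linear map for $p$ crossings.

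Granting this, the coloring system for $P_2^3$ is (\ref{el}) with $p_3\mapsto -p_3$. After the same unimodular change of variables $x=a-b$, $y=c-b$, $z=c$ (matrix $C$ with $\det C=-1$), we obtain the $2\times 2$ system (\ref{el2}) with coefficient matrix
\begin{equation}
A'=\begin{bmatrix} -p_1p_3-p_1+p_3 & -p_3\\ p_1p_2+p_1 & p_2\end{bmatrix},
\nonumber
\end{equation}
whose determinant is $\pm(p_1p_2-p_1p_3-p_2p_3)=\pm P$.

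The counting then follows the second half of the proof of Theorem \ref{th3} verbatim. If $\gcd(P,n)=1$, then $A'$ is invertible mod $n$, so $x\equiv y\equiv 0$ and only the $n$ trivial colorings remain. Otherwise, take the Smith normal form $A'=UDV$ over $\mathbb{Z}$ with $D=\operatorname{diag}(d_1,d_2)$. We have $d_1=\Delta_1=\gcd$ of the entries of $A'$. Reducing the entries modulo one another gives $d_1=\gcd(p_1,p_2,p_3)=Q$, and hence $d_2=|P|/Q$. The system becomes $d_1x'\equiv0$, $d_2y'\equiv0$, with $z$ free. This gives $\gcd(d_1,n)\gcd(d_2,n)\,n=D_1D_2n$ solutions, using Theorem \ref{th2.17} to transfer the solution count back to $(a,b,c)$.

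The main obstacle is the first step: justifying rigorously that reversing the crossing signs in one tangle is exactly the substitution $p_3\mapsto-p_3$ in (\ref{el}). I would verify this directly on the third-column recurrence. With negative crossings, the roles of the two strands entering the column are interchanged, so the linear recurrence $u_{k+1}=2u_k-u_{k-1}$ runs upward instead of downward. Its closed form $u_k=u_0+k(u_1-u_0)$ extends to negative $k$, so the two matching conditions are obtained by evaluating at $-p_3$. The gcd computation $d_1=Q$ is routine. One should note that $d_1\mid P$, which is needed so that $D_2$ is well-defined.
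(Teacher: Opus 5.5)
Your proposal is correct and follows the paper's route. Substituting $p_3\mapsto -p_3$ in (\ref{el}) gives exactly the paper's coloring system for $P_2^3$ (the first equation up to an overall sign, the second unchanged), and the count then comes from the same Smith-normal-form argument as in Theorem \ref{th3}, with your twist-region justification and your explicit check that the $\gcd$ of the entries of $A'$ is $Q$ spelling out what the paper leaves to the figure and to the word ``similarly''.
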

	\begin{proof}
		For any given oriented tangle link $P_2^3$, let $\mathcal{Q}(P_2^3)$ denote its fundamental quandle. Then for every 
		$f\in Hom(\mathcal{Q}(P_2^3),\mathbb{Z}_{n})$ following the same reasoning as in Theorem \ref{th3}, we can derive the coloring for each arc of $P_2^3$ using dihedral quandle as shown in the following diagram:
		\begin{figure}[H]
			\centering
			\includegraphics[width=0.85\textwidth,height=0.45\textwidth]{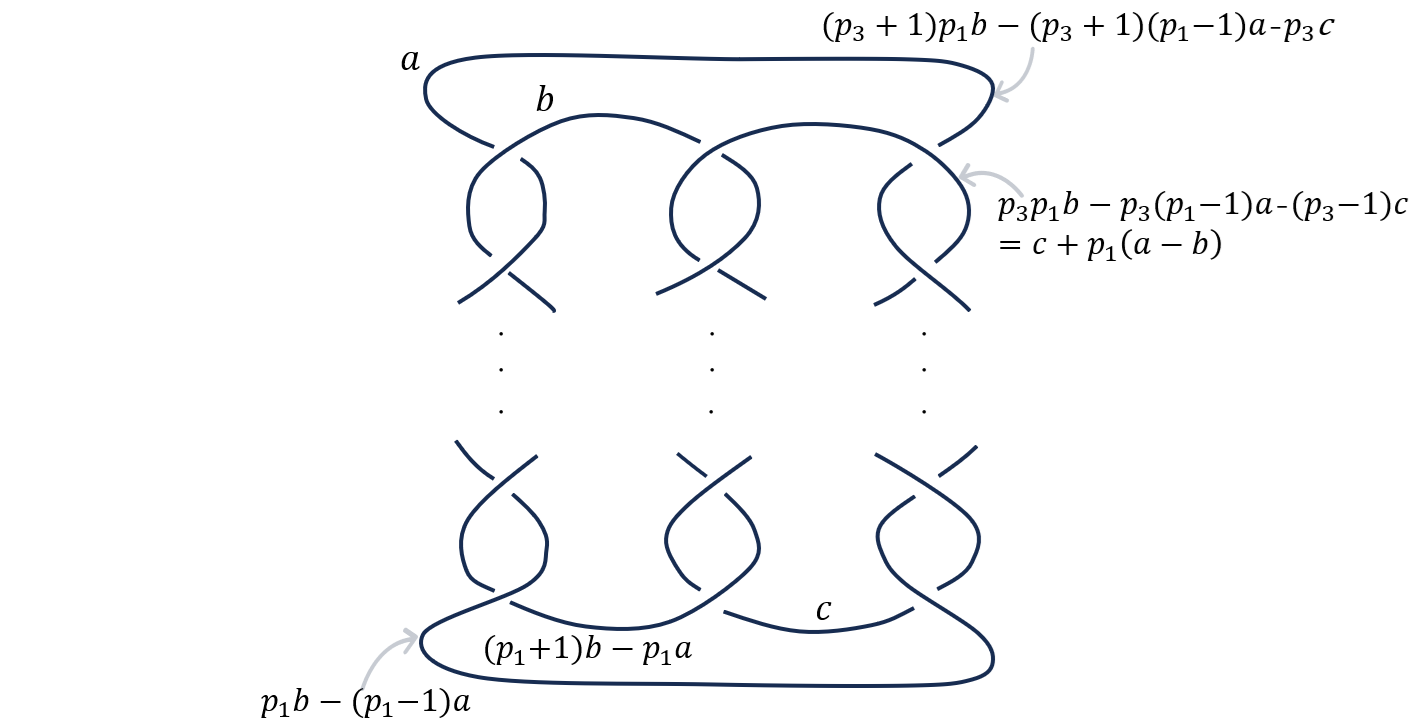}
			\caption{Coloring diagram of $P_2^3$ using the dihedral quandle $\mathbb{Z}_{n}$}
		\end{figure}\par
		And obtain the coloring equation system for $P_2^3$:
		\begin{equation}
			\begin{cases}
				p_{1}(p_{3}+1)(a-b)+p_{3}(c-a)\equiv 0,\\
				p_{1}(p_{2}+1)(a-b)+p_{2}(c-b)\equiv 0.
			\end{cases}
		\end{equation}
		Similarly to Theorem \ref{th3}, solving this system of equations yields the coloring number of $P_2^3$,
		\begin{equation}
			|Hom(P_2^3,\mathbb{Z}_n)|=
			\begin{cases}
				n\qquad \ \qquad \gcd(P,n)=1;\\
				D_1D_2n\qquad others.
			\end{cases}
			\nonumber
		\end{equation}
	\end{proof}

	\subsection{Quandle coloring spaces of 4-pretzel links} \label{sec3.2}
	According to \cite{Ostrander}, there are only four distinct types of 4-pretzel links, namely: $P_1^4=(p_{1},p_{2},p_{3},p_{4}),P_2^4=(p_{1},p_{2},p_{3},-p_{4}),P_3^4=(p_{1},p_{2},-p_{3},-p_{4})$ and $P_4^4=(p_{1},-p_{2},p_{3},-p_{4})$, and where $p_{1},p_{2},p_{3},p_{4}\in \mathbb{Z}^{+}$. We begin by examining the coloring space of the first type of 4-pretzel links.\par 
	For matrices over $\mathbb{Z}_{n}$, Lemma \ref{lem2.16} indicates that when $n$ is not a prime number, it may not be possible to transform them into the reduced row echelon form through row operations, thus preventing the derivation of explicit solutions. Consequently, the solution method for coloring equations of 3-pretzel links cannot be directly generalized to 4-pretzel links or more complex cases. When $n$ is prime, Lemma \ref{lem2.16} ensures the reducibility to reduced row echelon form over $\mathbb{Z}_{n}$. We therefore restrict our analysis of 4-pretzel links colorings to prime moduli.
	
	\begin{theorem}\label{th4p}
		Given an oriented 4-pretzel link $P_1^4=(p_{1},p_{2},p_{3},p_{4})$ and a dihedral quandle $\mathbb{Z}_{n}$, where n is a prime number, let $h=p_{1}p_{2}p_{3}+p_{1}p_{2}p_{4}+p_{1}p_{3}p_{4}+p_{2}p_{3}p_{4}$. Then the number of colorings of $P_1^4$ is 
		\begin{equation}
			|Hom(P_1^4,\mathbb{Z}_n)|=
			\begin{cases}
				n\qquad \ h\not\equiv 0;\\
				n^{2}\qquad h\equiv 0,\ and\ at\ least\ one\ p_{i}p_{j}\not \equiv 0\ hold\ for\ i\neq j,
				\\ \qquad \quad i,\ j =1,2,3,4;\\
				n^{3}\qquad  only\ one\ of\ p_{i},i=1,2,3,4\ is\ coprime\ with\ n;\\
				n^{4}\qquad p_{1}\equiv p_{2}\equiv p_{3}\equiv p_4\equiv 0.
			\end{cases}
			\nonumber
		\end{equation}
	\end{theorem}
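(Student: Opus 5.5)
We would follow the scheme of Theorem \ref{th3}: write down the coloring system of $P_1^4$, change to difference variables by an invertible integer matrix, and count the solutions over the field $\mathbb{Z}_n$ through a rank computation, which Lemma \ref{lem2.16} makes legitimate because $n$ is prime. As in Theorem \ref{th3}, orientations can be ignored, since the dihedral operation is its own inverse.

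\textbf{Step 1: the coloring system.} Label the top arcs entering the four twist columns cyclically by $y_1,y_2,y_3,y_4$, and let the $i$-th column contain $p_i$ crossings. Inside a twist column with top colors $u,v$, the two strands keep the constant difference $\pm(v-u)$, and each crossing shifts the colors by that difference. So after $p_i$ crossings the bottom colors of column $i$ are affine expressions $u+p_i(v-u)$ and $v+p_i(v-u)$ (up to the order of the two strands).

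Put $\delta_i=y_{i+1}-y_i$, with indices mod $4$. The cyclic closure along the top forces
$$\delta_1+\delta_2+\delta_3+\delta_4\equiv 0 .$$
Matching the bottom arcs between consecutive columns, as in (\ref{el1})--(\ref{3.2}), gives
$$p_1\delta_1\equiv p_2\delta_2\equiv p_3\delta_3\equiv p_4\delta_4 .$$
These equations should be checked against Fig.~\ref{t23} in the same way the $3$-pretzel system was derived; the analogous change of variables applied to (\ref{el}) should reproduce determinant $P$. The substitution $(y_1,\dots,y_4)\mapsto(\delta_1,\delta_2,\delta_3,y_4)$ has determinant $\pm1$, so by Theorem \ref{th2.17} we get
$$|Hom(P_1^4,\mathbb{Z}_n)|=n\cdot|\ker M|=n^{1+(3-\operatorname{rank}M)},$$
where $M$ is the $3\times 4$ system in $\delta_1,\dots,\delta_4$ given by $\sum\delta_i=0$ together with $p_1\delta_1=p_j\delta_j$ for $j=2,3,4$. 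Here $\operatorname{rank}M$ is computed over $\mathbb{Z}_n$ after reduction to RREF (Lemma \ref{lem2.16}), so $3-\operatorname{rank}M=\dim\ker M-1$.

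\textbf{Step 2: case analysis.} Let $k$ be the number of indices with $p_i\equiv0 \pmod n$.

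\emph{Case $k=0$.} Every $p_i$ is a unit. Write $t$ for the common value of $p_i\delta_i$, so $\delta_i=t p_i^{-1}$. The sum condition becomes
$$t\,h\,(p_1p_2p_3p_4)^{-1}\equiv0 .$$
The kernel is $0$ if $h\not\equiv0$, giving $n$ colorings, and is $1$-dimensional if $h\equiv0$, giving $n^2$ colorings.

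\emph{Case $k\ge1$.} Some $p_i\delta_i\equiv0$ forces $t\equiv0$, hence $\delta_j=0$ for every unit $p_j$. The remaining $k$ variables are free subject only to $\sum\delta_i=0$, so $\dim\ker M=k-1$ and there are $n^{k}$ colorings. Then:
\begin{itemize}
\item[(i)] $k=1$ gives $h\equiv\prod_{j\ne i}p_j\not\equiv0$ and $n$ colorings, which falls in the first line of the statement.
\item[(ii)] $k=2$ gives $h\equiv0$ while the product of the two surviving $p_j$ is nonzero, and $n^2$ colorings.
\item[(iii)] $k=3$, i.e.\ exactly one $p_i$ is coprime to $n$, gives $n^3$ colorings.
\item[(iv)] $k=4$ gives $n^4$ colorings.
\end{itemize}

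\textbf{Step 3: matching the statement.} It remains to check that these cases correspond exactly to the conditions listed in the theorem. Note that $h\equiv0$ together with some $p_ip_j\not\equiv0$ occurs precisely for $k=0$ with $h\equiv0$, or for $k=2$; both give $n^2$.

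\textbf{Main obstacle.} We expect the hardest part to be Step 1: reading off the correct system from the diagram. In particular we must make sure the bottom-matching relations really reduce to $p_i\delta_i$ all equal, up to signs that can be absorbed by an invertible change of variables, rather than some shifted variant such as $(p_i\pm1)$ factors like those appearing in (\ref{el}). We would first verify the reduction on the $3$-strand case, checking that it recovers determinant $P=p_1p_2+p_1p_3+p_2p_3$, and only then carry it over to four columns.
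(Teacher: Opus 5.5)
Your argument is correct, and it takes a different route from the paper's.

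\textbf{How the two routes differ.} The paper reads off an asymmetric system (\ref{el6}) in $a,b$ (two arcs of the first column) and the connecting arcs $c,d$. It substitutes $x=a-b$, $y=c-b$, $z=b-d$ to obtain a $3\times3$ matrix $C$ with $\det C=-h$, and then classifies by $\operatorname{rank}C$:
\begin{itemize}
\item rank $3$ iff $h\not\equiv0$;
\item rank $0$ iff all $p_i\equiv0$;
\item rank $1$ iff all $2\times2$ minors vanish, which it reduces to $p_ip_j\equiv0$ for all $i\neq j$ (forcing exactly three $p_i\equiv0$);
\item rank $2$ otherwise, where it solves the subcases by hand to confirm $n^2$.
\end{itemize}
You instead use the symmetric system $p_1\delta_1\equiv\cdots\equiv p_4\delta_4$, $\sum\delta_i\equiv0$, and split by $k=\#\{i:\ n\mid p_i\}$. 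Primality enters only to know that each $p_i$ is either $0$ or a unit.

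Your version needs no minors. It shows directly that the count is $n$ or $n^2$ (according to $h$) when $k=0$, and exactly $n^{k}$ when $k\geq1$. It also carries over verbatim to $m$ strands, so it would give a much cleaner proof of Theorem \ref{thmp} than the paper's rank induction. The paper's route keeps continuity with the diagram computation of Theorem \ref{th3}. The price is an asymmetric matrix whose minors must be listed and a separate check of the rank-$2$ subcases.

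\textbf{Step 1 is sound; drop the hedge.} You can remove ``up to the order of the two strands''. At each crossing of a twist column, the (left, right) pair of colors $(L,R)$ becomes $(2L-R,L)$ or $(R,2R-L)$. In both cases both entries are translated by $\mp(R-L)$, so the order never changes and $R-L$ is preserved. Since all $p_i>0$, the sign is the same in every column. Hence the bottom-left and bottom-right colors of column $i$ are $y_i\mp p_i\delta_i$ and $y_{i+1}\mp p_i\delta_i$. Equating the two expressions for each bottom arc gives $p_i\delta_i\equiv p_{i+1}\delta_{i+1}$.

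The $(p_i\pm1)$ factors in (\ref{el6}) come only from the paper's choice of variables. Eliminating $\delta_4=-(\delta_1+\delta_2+\delta_3)$ from your system gives a $3\times3$ matrix of determinant $\pm h$, which agrees with $\det C=-h$.

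\textbf{Your displayed count is off by one in the exponent.} $M$ as you describe it consists of four equations in $\delta_1,\dots,\delta_4$, so it is not $3\times4$. For that $M$,
$n\cdot|\ker M|=n^{1+\dim\ker M}=n^{5-\operatorname{rank}M}$, not $n^{1+(3-\operatorname{rank}M)}$.
For example, all $p_i\equiv0$ gives $\operatorname{rank}M=1$ and $n^4$ colorings, whereas your formula gives $n^3$. The exponent $1+(3-\operatorname{rank})$ is correct for the $3\times3$ matrix obtained after eliminating $\delta_4$. Your Step 2 actually uses $|Hom(P_1^4,\mathbb{Z}_n)|=n\cdot|\ker|$ correctly, so nothing downstream is affected.
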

	\begin{proof}
		For an arbitrary 4-pretzel link $P_1^4=(p_{1},p_{2},p_{3},p_{4})$ with all positive crossings, we denote its fundamental quandle as $\mathcal{Q}(P_1^4)$. For any $f\in Hom(\mathcal{Q}(P_1^4),\mathbb{Z}_{n})$, analogous to Theorem \ref{th3}, we first examine the image set of $f$. Let the coloring of the top-left arc be $a$ and that of the arc directly below it be $b$. Then the colors of the two bottom arcs of the first branch are $p_{1}b-(p_{1}-1)a$ and $(p_{1}+1)b-p_{1}a$ respectively. Proceeding further, let $c$ denote the coloring of the arc connecting the third and fourth branches. We then obtain the colorings of the two upper arcs of the fourth branch as: $p_{4}c-(p_{4}-1)p_{1}b+(p_{4}-1)(p_{1}-1)a$ and $(p_{4}+1)c-p_{4}p_{1}b+p_{4}(p_{1}-1)a$. Since each arc can only be assigned a single color, we have the following consistency condition:
		\begin{equation}
			p_{4}c-(p_{4}-1)p_{1}b+(p_{4}-1)(p_{1}-1)a\equiv a.
			\label{c1}
		\end{equation}
		Then we have $p_{3}c+p_{3}p_{1}(a-b)-(p_{3}-1)d=d+p_{1}(b-a)$. Let $d$ denote the coloring of the arc connecting the second and third branches. We can then determine the colorings for both the third and second branches. Specifically, the colorings of the two bottom arcs of the third branch are given by:$p_{3}c+p_{3}p_{1}(a-b)-(p_{3}-1)d$ and $(p_{3}+1)c+(p_{3}+1)p_{1}(a-b)-p_{3}d$, thus we have :
		\begin{equation}
			(p_{3}+1)c+(p_{3}+1)p_{1}(a-b)-p_{3}d\equiv c.
			\label{c2}
		\end{equation}
		Furthermore, there is $p_{3}c+p_{3}p_{1}(a-b)-(p_{3}-1)d=d+p_{1}(b-a)$. From the coloring states $b$ and $d$ of the two arcs, we obtain the coloring states of the two bottommost arcs of the second branch as: $p_{2}d-(p_{2}-1)b$ and $(p_{2}+1)d-p_{2}b$. Therefore, we can obtain:
		\begin{equation}
			\begin{cases}
				p_{2}d-(p_{2}-1)b\equiv (p_{1}+1)b-p_{1}a, \\
				(p_{2}+1)d-p_{2}b\equiv d+p_{1}(b-a).
			\end{cases}
			\label{c3}
		\end{equation}
		Thus, the complete coloring scheme of $P_1^4$ can be represented as shown in the following diagram:
		\begin{figure}[H]
			\centering
			\includegraphics[width=1\textwidth,height=0.47\textwidth]{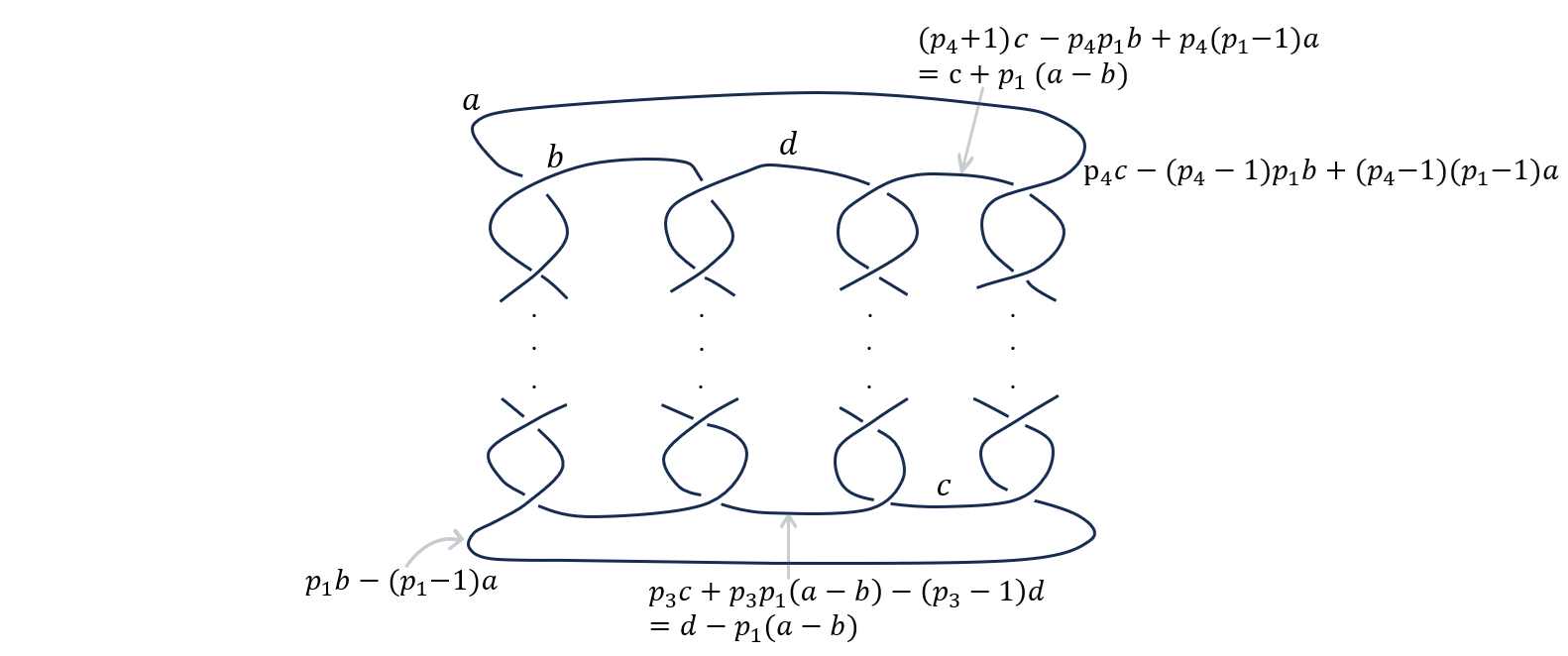}
			\caption{Coloring diagram of $P_1^4$ using the dihedral quandle $\mathbb{Z}_{n}$}
		\end{figure}\par
		In summary, the coloring states of all arcs in $P_1^4$ are completely determined by the variables $a,b,c,d$. Let $f(x_{1})=a,f(x_{2})=b,f(x_{3})=c,f(x_{4})=d,x_{1},x_{2},x_{3},x_{4}\in \mathcal{Q}(P_1^4)$ and $a,b,c,d\in \mathbb{Z}_{n}$. By synthesizing equations (\ref{c1}), (\ref{c2}), and (\ref{c3}), we obtain the following system of congruence equations:
		\begin{equation}
			\begin{cases}
				p_{1}(p_{3}+1)(a-b)+p_{3}(c-d)\equiv 0,\\
				-p_{1}(a-b)+p_{2}(b-d)\equiv 0, \\
				p_{1}(p_{4}-1)(a-b)+p_{4}(c-a)\equiv 0.
			\end{cases}
			\label{el6}
		\end{equation}
		To solve this system of congruence equations, we first perform the following variable substitution:
		\begin{equation}
			\begin{cases}
				a-b=x,\\
				c-b=y,\\
				b-d=z,\\
				d=k.
			\end{cases}
			\nonumber
		\end{equation}
		Expressed in matrix form, the system becomes:
		\begin{equation}
			\left[
			\begin{array}{c}
				x\\
				y\\
				z\\
				k
			\end{array}
			\right]
			=
			\left[
			\begin{array}{cccc}
				1 &-1 & 0 & 0\\
				0 &-1 & 1 & 0\\
				0 & 1 & 0 & -1\\
				0 & 0 & 0 & 1
			\end{array}
			\right]
			\left[
			\begin{array}{c}
				a \\ 
				b \\
				c\\
				d
			\end{array}
			\right]
			=
			B
			\left[
			\begin{array}{c}
				a \\ 
				b \\
				c\\
				d
			\end{array}
			\right].
			\nonumber 
		\end{equation}
		Since $det(B)=-1$ and $\gcd(-1,n)=\gcd(1,n)=1$ the matrix $B$ is invertible over $\mathbb{Z}_{n}$. Consequently, the system of equations (\ref{el6}) is equivalent to the following system:
		\begin{equation}
			\begin{cases}
				p_{1}(p_{3}+1)x+p_{3}(y+z)\equiv 0,\\
				-p_{1}x+p_{2}z\equiv 0,\\
				p_{1}(p_{4}-1)x+p_{4}(y-x)\equiv 0.
			\end{cases}
			\label{el7}
		\end{equation}
		The system of equations (\ref{el7}) can be expressed in matrix form as:
		\begin{equation}
			\left[
			\begin{array}{ccc}
				p_{1}(p_{3}+1)&p_{3}&p_{3}\\
				-p_{1}&0&p_{2}\\
				p_{1}(p_{4}-1)-p_{4}&p_{4}& 0
			\end{array}
			\right]
			\left[
			\begin{array}{c}
				x \\ 
				y \\
				z\\
			\end{array}
			\right]
			=
			C
			\left[
			\begin{array}{c}
				x \\ 
				y \\
				z\\
			\end{array}
			\right]
			\equiv
			\left[
			\begin{array}{c}
				0\\
				0\\
				0
			\end{array}
			\right].
			\nonumber 
		\end{equation}
		It is straightforward to verify that: $det(C)=-(p_{1}p_{2}p_{3}+p_{1}p_{2}p_{4}+p_{1}p_{3}p_{4}+p_{2}p_{3}p_{4})$. We now analyze the rank of matrix $C$:\par 
		\begin{romanlist}
			\item  If the rank of matrix $C$ is 3, i.e., $p_{1}p_{2}p_{3}+p_{1}p_{2}p_{4}+p_{1}p_{3}p_{4}+p_{2}p_{3}p_{4}\not\equiv0$, then the matrix $C$ is invertible. In this case, the homogeneous linear congruence system (\ref{el7}) has only the trivial solution, meaning: $x\equiv y\equiv z\equiv0$. Thus, the solution to the system (\ref{el6}) is $a\equiv b\equiv c\equiv d$, meaning that in this case $|Hom(P_1^4,\mathbb{Z}_{n})|=n$.\par 
			\item  If the rank of matrix $C$ is 0, since $p_{1},p_{2},p_{3},p_{4}\in \mathbb{Z}^{+}$, it must follow that $p_{1}\equiv p_{2}\equiv p_{3}\equiv p_{4}\equiv0$. Thus, the solutions to the system (\ref{el7}) are given by arbitrary $x,y,z\in \mathbb{Z}_{n}$, it immediately follows that the coloring count of $P_1^4$ satisfies $|Hom(P_1^4,\mathbb{Z}_{n})|=n^{4}$.\par 
			\item  If the rank of matrix $C$ is 1, then all $2\times 2$ minors of matrix $C$ vanish modulo $n$. That is
			$p_{1}p_{3}\equiv p_{1}p_{2}p_{3}+p_{1}p_{2}+p_{1}p_{3}\equiv p_{3}p_{4}+p_{1}p_{3}-p_{1}p_{3}p_{4}\equiv p_{1}p_{4}+p_{1}p_{3}+p_{3}p_{4}\equiv p_{1}p_{4}\equiv p_{2}p_{4}-p_{1}p_{2}(p_{4}-1)\equiv p_{2}p_{3}\equiv p_{3}p_{4}\equiv p_{2}p_{4}\equiv 0$.\par 
			Organized as follows:
			\begin{equation}
				p_{1}p_{2}\equiv p_{1}p_{3}\equiv p_{1}p_{4}\equiv p_{2}p_{3}\equiv p_{2}p_{4}\equiv p_{3}p_{4}\equiv 0.
				\label{el8}
			\end{equation}
			Since $n$ is prime and $p_{1},p_{2},p_{3},p_{4}\in \mathbb{Z}^{+}$, equation (\ref{el8}) implies that at least three of the $p_{1},i=1,2,3,4$ must be multiples of $n$. However, because the rank of $C$ is 1, not all four $p_{1},i=1234$ can be divisible by $n$ (which would reduce the rank to 0). Thus, exactly three of the $p_{i},i=1,2,3,4$ are multiples of $n$, while the remaining one is coprime with $n$.\par 
			If $p_{1}$ is coprime with $n$ while $p_{2},p_{3},p_{4}$ are multiples of $n$, then the congruence system (\ref{el6}) reduces to: $p_{1}(a-b)\equiv 0$, which simplifies to: $a-b\equiv 0$; If $p_{2}$ is coprime with $n$ while $p_{1},p_{3},p_{4}$ are multiples of $n$, then the congruence system (\ref{el6}) reduces to: $p_{2}(b-d)\equiv 0$, which simplifies to: $b-d\equiv 0$; If $p_{3}$ is coprime with $n$ while $p_{1},p_{2},p_{4}$ are multiples of $n$, then the congruence system (\ref{el6}) reduces to: $p_{3}(c-d)\equiv 0$, which simplifies to: $c-d\equiv 0$; If $p_{4}$ is coprime with $n$ while $p_{1},p_{2},p_{3}$ are multiples of $n$, then the congruence system (\ref{el6}) reduces to: $p_{4}(c-a)\equiv 0$, which simplifies to: $c-a\equiv 0$. Summary: When exactly one of the parameters $p_{i},i=1,2,3,4$ is coprime to $n$ (while the other three are multiples of $n$), the coloring count of $P_1^4$ satisfies: $|Hom(P_1^4,\mathbb{Z}_{n})|=n^3$.\par 
			\item  If the rank of matrix $C$ is 2, then at least one of the pairwise products $p_{1}p_{2},p_{1}p_{3},p_{1}p_{4},p_{2}p_{3},p_{2}p_{4},p_{3}p_{4}$ is non-zero modulo $n$, while the cubic sum $h=p_{1}p_{2}p_{3}+p_{1}p_{2}p_{4}+p_{1}p_{3}p_{4}+p_{2}p_{3}p_{4}\equiv 0$ vanishes.\par 
			Without loss of generality, if $p_{1}p_{2}\not \equiv 0$ where $n$ is prime, then both $p_{1}$ and $p_{2}$ must be coprime with $n$. Clearly, neither the case where $p_{3}\not \equiv 0,p_{4}\equiv 0$ nor $p_{3}\equiv 0,p_{4}\not \equiv 0$ can satisfy $h\equiv 0$. When $p_{3}\equiv p_{4}\equiv 0$ the congruence system (\ref{el7}) reduces to:
			\begin{equation}
				\begin{cases}
					p_{1}x\equiv 0,\\
					-p_{1}x+p_{2}x\equiv 0.
				\end{cases}
				\label{3.19}
			\end{equation}
			It is straightforward to verify that the solutions to the congruence system (\ref{3.19}) are given by: $x\equiv z\equiv 0,y\in \mathbb{Z}_{n}$. Consequently, in this case, the solutions to the congruence system (\ref{el6}) satisfy:
			\begin{equation}
				\begin{cases}
					a=d=b,\\
					c=b+k\qquad k=0,1,...,n-1.
				\end{cases}
				\nonumber
			\end{equation}
			Therefore, at this moment, $|Hom(P_1^4,\mathbb{Z}_{n})|=n^{2}$.\par 
			When $p_{3}\not \equiv 0,p_{4}\not \equiv 0$, since n is prime and $p_{1}p_{2}\not \equiv 0$, it follows that all $p_{1},p_{2},p_{3},p_{4}$ are units in $\mathbb{Z}_{n}$. That is, there exist multiplicative inverses $p_{i}^{-1}\in \mathbb{Z}_{n}$ satisfying $p_{i}^{-1}p_{i}\equiv 1\ (mod\ n)$, for each $i=1,2,3,4$. Furthermore, given that the matrix $C$ has rank 2, the solution set of the congruence system (\ref{el7}) is characterized by:
			\begin{equation}
				\begin{cases}
					z\equiv p_{2}^{-1}p_{1}x,\\
					y\equiv p_{4}^{-1}[p_{4}-p_{1}(p_{4}-1)]x.
				\end{cases}
			\end{equation}
			Consequently, the solution to congruence system (\ref{el6}) in this case is given by:
			\begin{equation}
				\begin{cases}
					d\equiv b-p_{2}^{-1}p_{1}(a-b),\\
					c\equiv b+p_{4}^{-1}[p_{4}-p_{1}(p_{4}-1)](a-b).
				\end{cases}
			\end{equation}
			Thus, in this case we also have: $|Hom(P_1^4,\mathbb{Z}_{n})|=n^{2}$.\par 
			In summary, when $h\equiv 0$, and at least one of  $p_{1}p_{2},p_{1}p_{3},p_{1}p_{4},p_{2}p_{3},p_{2}p_{4},p_{3}p_{4}$ is non-zero modulo $n$, we conclude that $|Hom(P_1^4,\mathbb{Z}_{n})|=n^{2}$.
		\end{romanlist}
	\end{proof}
	\begin{example}
		Given an oriented pretzel link $(p_{1},p_{2},p_{3},p_{4})=(11,12,15,15)$ and a dihedral quandle $\mathbb{Z}_{n}$, when $n=7$, we have $h=p_{1}p_{2}p_{3}+p_{1}p_{2}p_{4}+p_{1}p_{3}p_{4}+p_{2}p_{3}p_{4}=0\ (mod\ 7)$. Substituting the values of $p_{1},p_{2},p_{3},p_{4}$ into the congruence system (\ref{el7}) and simplifying, we obtain:
		\begin{equation}
			\begin{cases}
				x+y+z\equiv 0,\\
				3x+5z\equiv 0,\\
				6x+y\equiv 0.
			\end{cases}
			\label{ex14}
		\end{equation}
		Solving the congruence system (\ref{ex14}) yields:
		\begin{equation}
			\begin{cases}
				x\equiv 3z,\\
				y\equiv 3z.
			\end{cases}
			\nonumber
		\end{equation}
		That is, at this time, the solution to the congruence system (\ref{el6}) is:
		\begin{equation}
			\begin{cases}
				a\equiv 4b+4d,\\
				c\equiv 4b+4d.
			\end{cases}
			\nonumber
		\end{equation}
		Thus, the coloring number of the pretzel link (11,12,15,15) at this time is $n^{2}=49$.\par 
		It is also clear that $p_{1}$ and $p_{2}$ are both coprime with $n$, so $p_{1}p_{2}\not \equiv 0\ (mod\ n)$ which is consistent with the conclusion of Theorem \ref{th4p}.
	\end{example}
	\begin{corollary}
		Given an oriented pretzel link $P_2^4=(p_{1},p_{2},p_{3},-p_{4})$ and a dihedral quandle $\mathbb{Z}_{n}$, where $n$ is prime, let $h=p_{1}p_{2}p_{3}-p_{1}p_{2}p_{4}-p_{1}p_{3}p_{4}-p_{2}p_{3}p_{4}$. The number of colorings of $P_2^4$ is:
		\begin{equation}
			|Hom(P_2^4,\mathbb{Z}_n)|=
			\begin{cases}
				n\qquad \ h\not\equiv 0;\\
				n^{2}\qquad h\equiv 0,\ and\ at\ least\ one\ p_{i}p_{j}\not \equiv 0\ hold\ for\ i\neq j,
				\\ \qquad \quad i,\ j =1,2,3,4;\\
				n^{3}\qquad  only\ one\ of\ p_{i},i=1,2,3,4\ is\ coprime\ with\ n;\\
				n^{4}\qquad p_{1}\equiv p_{2}\equiv p_{3}\equiv p_4\equiv 0.
			\end{cases}
			\nonumber
		\end{equation}
	\end{corollary}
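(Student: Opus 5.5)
We follow the proof of Theorem \ref{th4p}, changing only the fourth tangle, which now has $p_4$ negative crossings. The corollary for 3-pretzel links shows that reversing the crossing sign of a tangle amounts to replacing its parameter by its negative in the coloring recursion. Concretely, the colors along the $k$-th tangle follow an arithmetic progression in the two incoming colors, and the crossing sign only flips the direction in which the progression is read. So we first redo the coloring of Fig.~7 with the same variables: $a,b$ on the top-left arcs, $c$ on the arc joining the third and fourth branches, and $d$ on the arc joining the second and third branches. For the first three branches the consistency conditions (\ref{c2}) and (\ref{c3}) are unchanged. Condition (\ref{c1}) becomes its analogue with $p_4$ replaced by $-p_4$. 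This gives a system of the form (\ref{el6}) whose third equation is $p_1(-p_4-1)(a-b)-p_4(c-a)\equiv 0$.

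Next we apply the same unimodular substitution $x=a-b$, $y=c-b$, $z=b-d$, $k=d$ (matrix $B$, $\det B=-1$). This yields a $3\times 3$ coefficient matrix $C'$, which is $C$ with $p_4\mapsto -p_4$. We compute $\det C'=-h$ with $h=p_1p_2p_3-p_1p_2p_4-p_1p_3p_4-p_2p_3p_4$. This is a direct check, since $\det C$ is multilinear and odd in $p_4$ in the relevant terms. Since $n$ is prime, Lemma \ref{lem2.16} allows row reduction over the field $\mathbb{Z}_n$. The number of colorings is then $n\cdot n^{3-\operatorname{rank}C'}$, with the free variable $k$ contributing the factor $n$.

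The rank analysis then goes case by case. Rank $3$ holds exactly when $h\not\equiv 0$, giving $n$. Rank $0$ forces all $p_i\equiv 0$, since the entries include $p_2,p_3,p_4$ and $-p_1$; this gives $n^4$. For rank $\le 1$, the $2\times 2$ minors of $C'$ are those of $C$ with signs changed on terms containing $p_4$. We verify, as in (\ref{el8}), that their vanishing is equivalent to all pairwise products $p_ip_j\equiv 0$. Since $n$ is prime, this means at least three $p_i\equiv 0$; excluding rank $0$ leaves exactly one coprime, giving $n^3$. The remaining case is rank $2$: $h\equiv 0$ and some $p_ip_j\not\equiv 0$, giving $n^2$. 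Conversely, every configuration in the statement's conditions falls into exactly one of these rank cases, so the case list is exhaustive.

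The main obstacle is the rank-$1$ step: showing that the vanishing of all $2\times 2$ minors of $C'$ is equivalent to the vanishing of all pairwise products. Some minors, such as $p_3p_4+p_1p_3-p_1p_3p_4$ in the original, mix degrees, and their signed versions must be combined carefully. We would use the minors $p_1p_3$, $p_1p_4$, $p_2p_3$, $p_3p_4$, $p_2p_4$ first. Each of these is a single monomial up to sign, so each already vanishes. Substituting them into the remaining minors then kills $p_1p_2$, since the remaining term is $p_1p_2(\pm p_4\mp 1)$ or $p_1p_2(p_3+1)$ modulo the others. If necessary we split on whether $p_3$ or $p_4\equiv 0$ and use primality.
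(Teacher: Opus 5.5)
Your proposal is correct and follows the paper's route. The paper also reads off the system (\ref{eq-4}), which is (\ref{el6}) with $p_4\mapsto -p_4$ in the third equation up to an overall sign, and then defers to the substitution by $B$ and the rank analysis of Theorem~\ref{th4p}. Your version is actually more complete: you apply rank--nullity over the field $\mathbb{Z}_n$ directly and check explicitly that the $2\times 2$ minors of $C'$ vanish exactly when all $p_ip_j\equiv 0$. The quickest way to close that check is to note $p_1p_2p_3=p_2\cdot p_1p_3\equiv 0$, so the minor $p_1p_2(p_3+1)+p_1p_3$ reduces to $p_1p_2$ and no case split is needed.
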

	\begin{proof}
		For any given pretzel link $P_2^4=(p_{1},p_{2},p_{3},-p_{4})$ and dihedral quandle $\mathbb{Z}_{n}$ where $n$ is prime, analogous to Theorem \ref{th4p}, let the fundamental quandle of $P_2^4$ be $\mathcal{Q}(P_2^4)$. For any $f\in Hom(\mathcal{Q}(P_2^4),\mathbb{Z}_{n})$, the image of $f$ is determined by the coloring of four arcs, denoted $a,b,c,d$. Let $f(x_{1})=a,f(x_{2})=b,f(x_{3})=c,f(x_{4})=d,x_{1},x_{2},x_{3},x_{4}\in \mathcal{Q}(P_2^4)$ and $a,b,c,d\in \mathbb{Z}_{n}$. The coloring of all arcs of $P_2^4$ can be shown in the following figure:
		\begin{figure}[H]
			\centering
			\includegraphics[width=1\textwidth,height=0.47\textwidth]{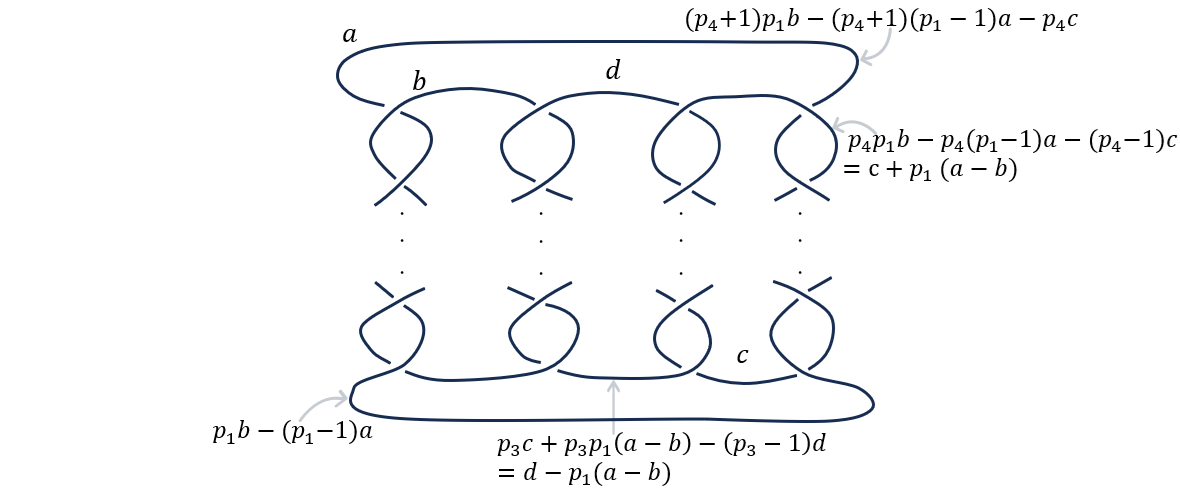}
			\caption{Coloring diagram of $P_2^4$ using the dihedral quandle $\mathbb{Z}_{n}$}
		\end{figure}\par 
		The coloring system for $P_2^4$ is:
		\begin{equation}
			\begin{cases}
				p_{1}(p_{3}+1)(a-b)+p_{3}(c-d)\equiv 0,\\
				-p_{1}(a-b)+p_{2}(b-d)\equiv 0,\\
				p_{1}(p_{4}+1)(a-b)+p_{4}(c-a)\equiv 0.
			\end{cases}
			\label{eq-4}
		\end{equation}
		Solve the system of congruences (\ref{eq-4}), and similarly to Theorem \ref{th4p}, we can obtain that the coloring number of $P_2^4$ is:
		\begin{equation}
			|Hom(P_2^4,\mathbb{Z}_n)|=
			\begin{cases}
				n\qquad \ h\not\equiv 0;\\
				n^{2}\qquad h\equiv 0,\ and\ at\ least\ one\ p_{i}p_{j}\not \equiv 0\ hold\ for\ i\neq j,
				\\ \qquad \quad i,\ j =1,2,3,4;\\
				n^{3}\qquad  only\ one\ of\ p_{i},i=1,2,3,4\ is\ coprime\ with\ n;\\
				n^{4}\qquad p_{1}\equiv p_{2}\equiv p_{3}\equiv 0.
			\end{cases}
			\nonumber
		\end{equation}
	\end{proof}
	\begin{corollary}
		Given an oriented pretzel link $P_3^4=(p_{1},p_{2},-p_{3},-p_{4})$ and a dihedral quandle $\mathbb{Z}_{n}$, where n is prime. Let $h=-p_{1}p_{2}p_{3}-p_{1}p_{2}p_{4}+p_{1}p_{3}p_{4}+p_{2}p_{3}p_{4}$, the number of colorings of $P_3^4$ is:
		\begin{equation}
			|Hom(P_3^4,\mathbb{Z}_n)|=
			\begin{cases}
				n\qquad \ h\not\equiv 0;\\
				n^{2}\qquad h\equiv 0,\ and\ at\ least\ one\ p_{i}p_{j}\not \equiv 0\ hold\ for\ i\neq j,
				\\ \qquad \quad i,\ j =1,2,3,4;\\
				n^{3}\qquad  only\ one\ of\ p_{i},i=1,2,3,4\ is\ coprime\ with\ n;\\
				n^{4}\qquad p_{1}\equiv p_{2}\equiv p_{3}\equiv p_4\equiv 0.
			\end{cases}
			\nonumber
		\end{equation}
	\end{corollary}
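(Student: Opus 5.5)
We follow the proof of Theorem \ref{th4p} almost verbatim, changing only the crossing signs in the last two tangles. Fix an orientation of $P_3^4=(p_1,p_2,-p_3,-p_4)$ and let $f\in Hom(\mathcal{Q}(P_3^4),\mathbb{Z}_n)$. Since the dihedral operation is its own inverse, the arc orientations are irrelevant, and negative crossings only change the propagation pattern inside a twist region. Concretely, a run of $k$ negative half-twists propagates colors exactly as a run of $k$ positive ones read from the opposite end, which is why $p_4-1$ became $p_4+1$ in the $P_2^4$ system.

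\textbf{Step 1: the coloring system.} We keep the free colors $a,b$ (top-left arcs), $c$ (arc joining the third and fourth branches) and $d$ (arc joining the second and third branches). Propagating through the four branches and imposing that each arc carries one color gives three consistency relations, as in (\ref{c1})--(\ref{c3}). The second relation, $-p_1(a-b)+p_2(b-d)\equiv 0$, is unchanged because the first two tangles are unchanged. In the other two equations, the negative sign on the third and fourth tangles replaces $p_3$ by $-p_3$ and $p_4$ by $-p_4$. This gives
\begin{equation}
\begin{cases}
p_1(1-p_3)(a-b)-p_3(c-d)\equiv 0,\\
-p_1(a-b)+p_2(b-d)\equiv 0,\\
p_1(-p_4-1)(a-b)-p_4(c-a)\equiv 0.
\end{cases}
\nonumber
\end{equation}
The exact form of the first and third equations should be checked against the coloring diagram, since this bookkeeping is the error-prone part.

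\textbf{Step 2: reduction to a $3\times 3$ system.} We apply the same unimodular substitution $x=a-b$, $y=c-b$, $z=b-d$, $k=d$ with matrix $B$, $\det B=-1$. By Theorem \ref{th2.17} this yields $C'\mathbf{x}\equiv 0$, where $C'$ is the matrix $C$ of Theorem \ref{th4p} with $p_3\mapsto -p_3$ and $p_4\mapsto -p_4$. The determinant is a polynomial identity in the $p_i$, so $\det C'=-h'$ with
\[
h'=p_1p_2(-p_3)+p_1p_2(-p_4)+p_1(-p_3)(-p_4)+p_2(-p_3)(-p_4)=h .
\]
The $2\times 2$ minors are likewise the old minors with these sign substitutions, so their common vanishing modulo $n$ is again equivalent to all pairwise products $p_ip_j\equiv 0$. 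The signs only flip some products, which does not affect vanishing.

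\textbf{Step 3: rank analysis over the field $\mathbb{Z}_n$.} Since $n$ is prime, Lemma \ref{lem2.16} applies, and the number of colorings is $n\cdot n^{3-\operatorname{rank}C'}$. The four cases follow Theorem \ref{th4p}:
\begin{itemize}
\item Rank 3 holds iff $h\not\equiv 0$, giving $n$ colorings.
\item Rank 0 holds iff all $p_i\equiv 0$, giving $n^4$.
\item Rank 1 holds iff all $p_ip_j\equiv 0$ but not all $p_i\equiv 0$. Primality then forces exactly one $p_i$ to be a unit. The system collapses to a single equation $p_i(\cdot)\equiv 0$, namely $a-b$, $b-d$, $c-d$ or $c-a\equiv 0$, giving $n^3$.
\item Rank 2 holds iff $h\equiv 0$ and some $p_ip_j\not\equiv 0$, giving $n^2$.
\end{itemize}

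\textbf{Main obstacle.} The step needing real care is the rank-2 case, specifically confirming that the entry pattern of $C'$ still makes every $2\times 2$ minor lie in the ideal generated by the $p_ip_j$. Otherwise some configuration with $h\equiv 0$ and all $p_ip_j\equiv 0$ could have rank 2, or rank could drop unexpectedly. I would verify this directly. First list the nine minors of $C'$, as in case (iii) of Theorem \ref{th4p}. Then check that each is an integer combination of $p_1p_2,\dots,p_3p_4$, and that conversely each $p_ip_j$ is recoverable modulo $n$ from these minors. Once that holds, the rank-2 count follows from rank--nullity over $\mathbb{Z}_n$, with no need for the explicit solution formulas used in Theorem \ref{th4p}.
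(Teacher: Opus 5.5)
Your proposal is correct and is essentially the paper's argument. The system you obtain from the substitution $p_3\mapsto -p_3$, $p_4\mapsto -p_4$ coincides with the paper's diagram-derived system (\ref{eq-3-4}) up to reordering the equations and multiplying each by $-1$, which settles the check you flagged in Step~1; the rest (the unimodular change of variables via $B$, $\det C'=-h$, and the rank case analysis over the field $\mathbb{Z}_n$) is exactly the ``similarly to Theorem~\ref{th4p}'' step the paper invokes, and your ``main obstacle'' is already handled because the $2\times 2$ minor computation in case (iii) of Theorem~\ref{th4p} never uses positivity of the $p_i$, so it transfers verbatim under the sign substitution.
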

	\begin{proof}
		For any given pretzel link $P_3^4=(p_{1},p_{2},-p_{3},-p_{4})$ and dihedral quandle $\mathbb{Z}_{n}$ where $n$ is prime, analogous to Theorem \ref{th4p}, let the fundamental quandle of $P_3^4$ be $\mathcal{Q}(P_3^4)$. For any $f\in Hom(\mathcal{Q}(P_3^4),\mathbb{Z}_{n})$, the image of $f$ is determined by the coloring of four arcs, denoted $a,b,c,d$. Let $f(x_{1})=a,f(x_{2})=b,f(x_{3})=c,f(x_{4})=d,x_{1},x_{2},x_{3},x_{4}\in \mathcal{Q}(P_3^4)$ and $a,b,c,d\in \mathbb{Z}_{n}$. The coloring of all arcs of $P_3^4$ can be shown in Fig.\ref{t12}:
		\begin{figure}[!ht]
			\centering
			\includegraphics[width=1\textwidth,height=0.47\textwidth]{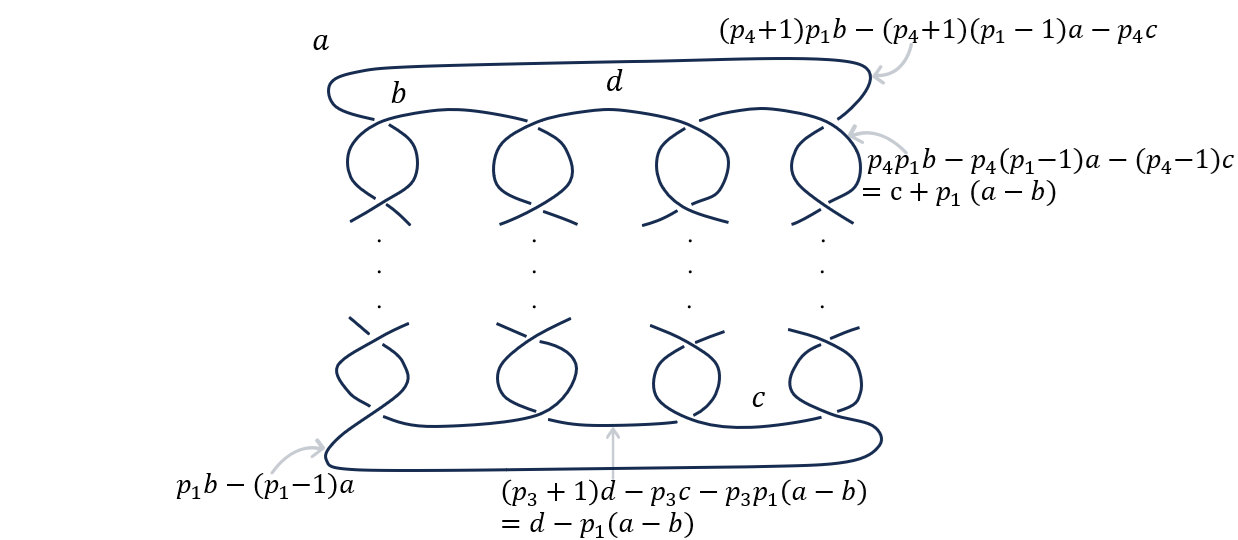}
			\caption{Coloring diagram of $P_3^4$ using the dihedral quandle $\mathbb{Z}_{n}$}
			\label{t12}
		\end{figure}\par 
		The coloring system of $P_3^4$ is:
		\begin{equation}
			\begin{cases}
				p_{1}(p_{4}+1)(a-b)+p_{4}(c-a)\equiv 0,\\
				p_{1}(a-b)+p_{2}(d-b)\equiv 0,\\
				p_{1}(p_{3}-1)(a-b)+p_{3}(c-d)\equiv 0.
			\end{cases}
			\label{eq-3-4}
		\end{equation}
		Solve the system of congruences (\ref{eq-3-4}), and similarly to Theorem \ref{th4p}, we can obtain that the coloring number of $P_3^4$ is:
		\begin{equation}
			|Hom(P_3^4,\mathbb{Z}_n)|=
			\begin{cases}
				n\qquad \ h\not\equiv 0;\\
				n^{2}\qquad h\equiv 0,\ and\ at\ least\ one\ p_{i}p_{j}\not \equiv 0\ hold\ for\ i\neq j,
				\\ \qquad \quad i,\ j =1,2,3,4;\\
				n^{3}\qquad  only\ one\ of\ p_{i},i=1,2,3,4\ is\ coprime\ with\ n;\\
				n^{4}\qquad p_{1}\equiv p_{2}\equiv p_{3}\equiv p_4\equiv0.
			\end{cases}
			\nonumber
		\end{equation}
	\end{proof}
	\begin{corollary}
		Given an oriented pretzel link $P_4^4=(p_{1},-p_{2},p_{3},-p_{4})$ and a dihedral quandle $\mathbb{Z}_{n}$, where n is prime, let $h=-p_{1}p_{2}p_{3}+p_{1}p_{2}p_{4}-p_{1}p_{3}p_{4}+p_{2}p_{3}p_{4}$. The number of colorings of $P_4^4$ is:
		\begin{equation}
			|Hom(P_4^4,\mathbb{Z}_n)|=
			\begin{cases}
				n\qquad \ h\not\equiv 0;\\
				n^{2}\qquad h\equiv 0,\ and\ at\ least\ one\ p_{i}p_{j}\not \equiv 0\ hold\ for\ i\neq j,
				\\ \qquad \quad i,\ j =1,2,3,4;\\
				n^{3}\qquad  only\ one\ of\ p_{i},i=1,2,3,4\ is\ coprime\ with\ n;\\
				n^{4}\qquad p_{1}\equiv p_{2}\equiv p_{3}\equiv p_4\equiv 0.
			\end{cases}
			\nonumber
		\end{equation}
	\end{corollary}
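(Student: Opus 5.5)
Following the pattern of Theorem \ref{th4p} and the two preceding corollaries, I will first write down the coloring system for $P_4^4=(p_1,-p_2,p_3,-p_4)$. Each tangle of $|p_i|$ crossings colors its column by the arithmetic progression rule of the dihedral quandle; changing the sign of $p_i$ only replaces $p_i$ by $-p_i$ in the progression. So I will take the coloring of $P_1^4$ with arc variables $a,b,c,d$ and substitute $p_2\mapsto -p_2$ and $p_4\mapsto -p_4$ in (\ref{el6}). This gives a homogeneous system of three congruences in $a,b,c,d$ modulo the prime $n$. Its shape matches (\ref{el6}) and (\ref{eq-4}), with $p_2$ and $p_4$ replaced by their negatives. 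As in Theorem \ref{th4p}, I will then apply the unimodular substitution $x=a-b$, $y=c-b$, $z=b-d$, $k=d$. Since $\det B=-1$, Theorem \ref{th2.17} shows the new system is equivalent to the old one. It becomes a $3\times3$ system $C'(x,y,z)^T\equiv 0$, and $k$ is free, which accounts for the trivial factor $n$.

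Next I compute $\det C'$. The expression $-\det C=\sum_{i<j<l}p_ip_jp_l$ for $P_1^4$ is polynomial in the $p_i$ and the derivation is sign-independent. So $\det C'$ is obtained by substituting $p_2\to-p_2$, $p_4\to-p_4$, which gives $\pm(-p_1p_2p_3+p_1p_2p_4-p_1p_3p_4+p_2p_3p_4)=\pm h$. Because $n$ is prime, Lemma \ref{lem2.16} lets me reduce $C'$ to RREF over the field $\mathbb{Z}_n$. Then $|Hom(P_4^4,\mathbb{Z}_n)|=n\cdot n^{3-\operatorname{rank}C'}$, and the proof becomes a case analysis on the rank.

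For the cases: rank $3$ holds exactly when $h\not\equiv0$, giving $n$. Rank $0$ forces every entry to vanish; the entries include $\pm p_2$, $\pm p_3$, $\pm p_4$ and combinations involving $p_1$, so all $p_i\equiv0$ and the count is $n^4$. For rank $\le1$ I will list the $2\times2$ minors of $C'$. As in (\ref{el8}), up to sign they reduce to the pairwise products $p_ip_j$. Their vanishing means at least three $p_i\equiv0$, and rank exactly $1$ means exactly one $p_i$ is a unit. Plugging each of the four possibilities back into the original system leaves a single equation, such as $p_1(a-b)\equiv0$, giving $n^3$. The remaining case is $h\equiv0$ with some $p_ip_j\not\equiv0$, so the rank is $2$ and the count is $n^2$. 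I could also confirm this directly by solving for $z$ and $y$ in terms of $x$ using inverses of the unit $p_i$, as in part (iv) of Theorem \ref{th4p}.

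The main obstacle is the rank-$1$ step. I must verify that, after the sign changes, the nine $2\times2$ minors of $C'$ still generate the same conditions $p_ip_j\equiv0$ for all $i\ne j$. The signs could in principle produce cancellations different from those in (\ref{el8}). I will handle this by first extracting the minors that are single monomials $\pm p_ip_j$, such as $p_1p_3$, $p_2p_3$, $p_3p_4$ and $p_2p_4$. I will then use them to simplify the remaining minors to $\pm p_1p_2$ and $\pm p_1p_4$. Because $n$ is prime, each condition $p_ip_j\equiv0$ means $n\mid p_i$ or $n\mid p_j$, which gives the ``at least three multiples of $n$'' conclusion.
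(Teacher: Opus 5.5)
Your proposal is correct and follows the paper's route: substituting $p_2\mapsto -p_2$, $p_4\mapsto -p_4$ into (\ref{el6}) gives exactly the paper's system (\ref{3.27}), up to the sign of the third congruence, and the paper then, like you, reruns the change of variables and rank analysis of Theorem \ref{th4p}. The rank-$1$ step you flag as the main obstacle is routine: five of the nine $2\times 2$ minors of $C'$ are already $\pm p_1p_3,\pm p_2p_3,\pm p_3p_4,\pm p_1p_4,\pm p_2p_4$, and modulo these each remaining minor reduces to $\pm p_1p_2$ or to $0$.
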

	\begin{proof}
		For any given pretzel link $P_4^4=(p_{1},-p_{2},p_{3},-p_{4})$ and dihedral quandle $\mathbb{Z}_{n}$ where $n$ is prime, analogous to Theorem \ref{th4p}, let the fundamental quandle of $P_4^4$ be $\mathcal{Q}(P_4^4)$. For any $f\in Hom(\mathcal{Q}(P_4^4),\mathbb{Z}_{n})$, the image of $f$ is determined by the coloring of four arcs, denoted $a,b,c,d$. Let $f(x_{1})=a,f(x_{2})=b,f(x_{3})=c,f(x_{4})=d,x_{1},x_{2},x_{3},x_{4}\in \mathcal{Q}(P_4^4)$ and $a,b,c,d\in \mathbb{Z}_{n}$. The coloring of all arcs of $P_4^4$ can be shown in the following figure:\par 
		\begin{figure}[H]
			\centering
			\includegraphics[width=1\textwidth,height=0.47\textwidth]{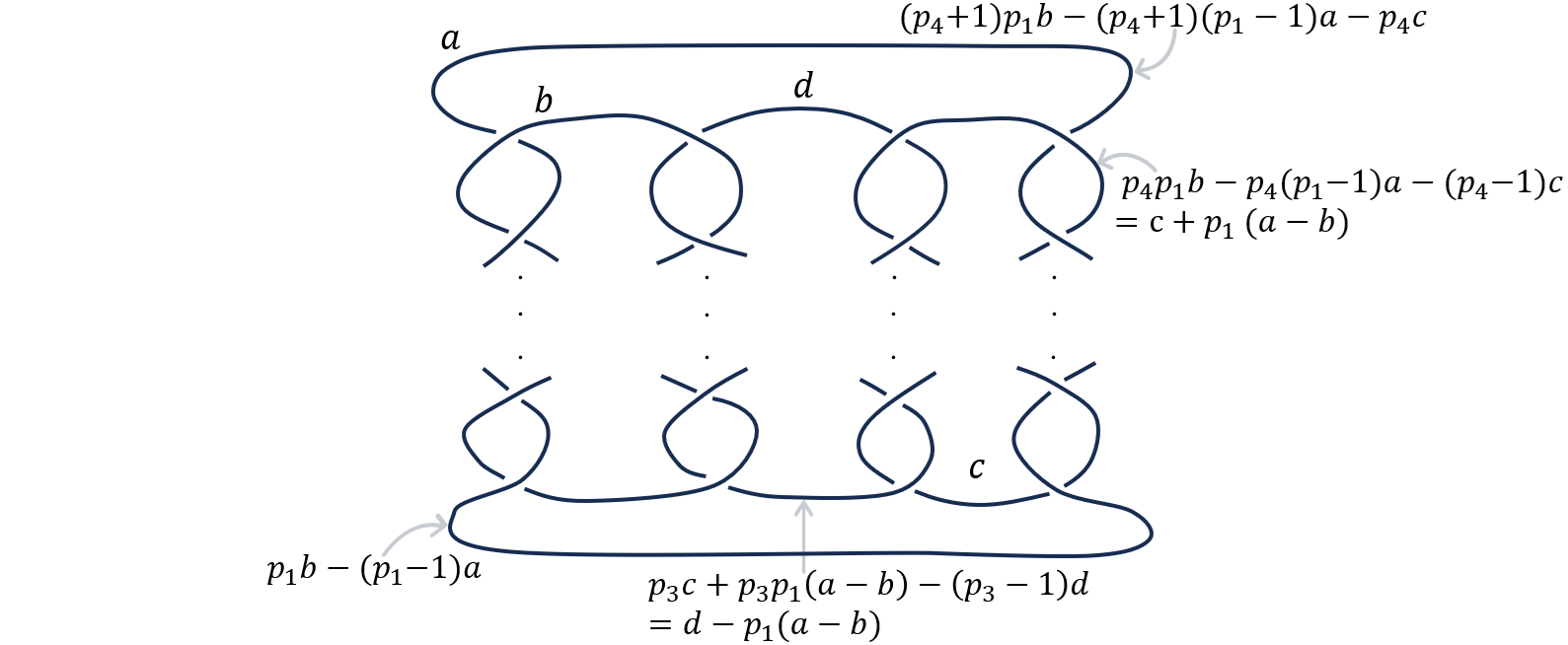}
			\caption{Coloring diagram of $P_4^4$ using the dihedral quandle $\mathbb{Z}_{n}$}
			\label{tap13}\par 
		\end{figure}
		The coloring system for $P_4^4$ is:
		\begin{equation}
			\begin{cases}
				p_{1}(p_{3}+1)(a-b)+p_{3}(c-d)\equiv 0,\\
				p_{2}(d-b)+p_{1}(b-a)\equiv 0,\\
				p_{1}(p_{4}+1)(a-b)+p_{4}(c-a)\equiv 0.
			\end{cases}
			\label{3.27}
		\end{equation}
		Solve the system of congruences (\ref{3.27}), and similarly to Theorem \ref{th4p}, we can obtain that the coloring number of $P_4^4$ is:
		\begin{equation}
			|Hom(P_4^4,\mathbb{Z}_n)|=
			\begin{cases}
				n\qquad \ h\not\equiv 0;\\
				n^{2}\qquad h\equiv 0,\ and\ at\ least\ one\ p_{i}p_{j}\not \equiv 0\ hold\ for\ i\neq j,
				\\ \qquad \quad i,\ j =1,2,3,4;\\
				n^{3}\qquad  only\ one\ of\ p_{i},i=1,2,3,4\ is\ coprime\ with\ n;\\
				n^{4}\qquad p_{1}\equiv p_{2}\equiv p_{3}\equiv p_4\equiv 0.
			\end{cases}
			\nonumber
		\end{equation}
	\end{proof}
	From the above analysis, it can be seen that when $n$ is a prime number, the coloring of different types of pretzel links is partially dependent on the value of $h$. Moreover, for pretzel links $(q_{1},q_{2},q_{3},q_{4})$ that do not consist entirely of positive crossings (i.e., where some $q_{i}<0$), the value of $h$ for $(q_{1},q_{2},q_{3},q_{4})$ can be obtained by simply substituting $p_{i}$ in Theorem \ref{th4p} for $q_{i}$.
	\subsection{Quandle coloring spaces of General pretzel links}
	From Theorem \ref{th2.13} and Theorem \ref{th2.14}, we can deduce that there are $\frac{1}{2m}(\sum_{d|m}\phi(d)2^{m/d}+\sum_{d|m,2|d}\phi(d)2^{m/d})$ different $m$-pretzel links. (The number of distinct $m$-pretzel links is equivalent to the number of orbits of the sequence $(s_1, s_2, \dots, s_m)$, where each $s_i = \pm 1$, under the group action generated by cyclic shifts and global inversion.) Based on the study of the coloring number of 4-pretzel links in Section \ref{sec3.2}, we can, without loss of generality, focus only on the coloring number of the first type of n-pretzel links here.
	\begin{theorem}\label{thmp}
		Given an oriented pretzel link $P^m=(p_{1},...,p_{m})$ and a dihedral quandle $\mathbb{Z}_{n}$, where n is prime , $m,p_{1},...,p_{m}\in \mathbb{Z}^{+}$. Let $h=\sum p_{i_{1}}p_{i_{2}}...p_{i_{m-1}},1\leq i_{1}<i_{2}<...<i_{m-1}\leq m$. The coloring number of $P^m$ is
		\begin{equation}
			|Hom(P^m,\mathbb{Z}_{n})|=
			\begin{cases}
				n\qquad \;\  h\not \equiv 0;\\
				n^{2}\qquad h\equiv 0\ and\ p_{1}p_{2}...p_{m-2}\equiv ...\equiv p_{3}p_{4}...p_{m}\equiv 0\ do\ not\\ \qquad \quad \, all\ hold;\\
				\,\vdots \\
				n^{m-2}\quad \ h\equiv 0\ and\ among\ p_{1},p_{2},...,p_{m}\ two\ are\ coprime\\ \qquad \quad \ \ with\  n\ while\ the\ rest\ are\ multiples\ of\ n;\\
				n^{m-1}\quad \  among\ p_{1},p_{2},...,p_{m}\ m-1\ are\ multiples\ of\ n\\ \qquad \quad \ \ \,  and\ only\ one\ is\ coprime\ with\ n;\\
				n^{m}\qquad \;p_{1}\equiv p_{2}\equiv ...\equiv p_{m}\equiv 0.
			\end{cases}
			\nonumber
		\end{equation}
	\end{theorem}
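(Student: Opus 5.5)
The plan is to set up the coloring system for $P^m$ the same way as in Theorem \ref{th4p}, and then to compute the rank of its coefficient matrix over the field $\mathbb{Z}_n$ ($n$ prime) directly in terms of how many of the $p_i$ vanish modulo $n$.

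\textbf{Step 1: set up the coloring system.} Color each tangle $A_i$ as in the proofs of Theorems \ref{th3} and \ref{th4p}. In the twist region of $A_i$, the two strands entering from the top carry colors $u_i,v_i$, and the colors change by the arithmetic progression $u,v,2v-u,\dots$. The bottom colors are therefore $u_i+p_i(v_i-u_i)$ and $u_i+(p_i+1)(v_i-u_i)$.

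Set $w_i=v_i-u_i$ (the ``difference'' carried through tangle $i$). The gluing conditions of the sum $A_1+\cdots+A_m$ then say:
\begin{itemize}
\item[(a)] the difference passed along the horizontal arcs is the same at every junction, up to sign, so that $p_1w_1=p_2w_2=\cdots=p_mw_m$;
\item[(b)] the differences close up around the top arcs, which gives one further relation $\sum_i w_i\equiv 0$ (again with signs fixed by orientation).
\end{itemize}
After an invertible change of variables (a unitriangular matrix, as with $B$ in Theorem \ref{th4p}, so Theorem \ref{th2.17} applies), the solution set is $\mathbb{Z}_n$ (one overall translation color) times the solution space $W\subseteq\mathbb{Z}_n^m$ of
\[
p_1w_1=\cdots=p_mw_m,\qquad w_1+\cdots+w_m\equiv 0 .
\]
Hence $|Hom(P^m,\mathbb{Z}_n)|=n^{1+\dim W}$. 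I expect this step to be the main obstacle: one must extract the diagram relations cleanly for general $m$, checking in particular that exactly $m$ independent relations survive and that the signs are consistent. I would verify the result against the $m=3$ system (\ref{el}) and the $m=4$ system (\ref{el6}).

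\textbf{Step 2: compute $\dim W$ by cases on the zero set.} Let $Z=\{i: p_i\equiv 0\}$ and $k=|Z|$.

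If $k=0$, parametrize by the common value $t=p_iw_i$, so $w_i=p_i^{-1}t$. The sum condition becomes $t\sum_i p_i^{-1}=0$, and $\sum_i p_i^{-1}=h/\prod_i p_i$. So $\dim W=0$ when $h\not\equiv 0$ and $\dim W=1$ when $h\equiv 0$. This gives $n$ or $n^2$ colorings.

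If $k\ge 1$, the common value is forced to be $t=0$. Then $w_i=0$ for $i\notin Z$, the $w_j$ with $j\in Z$ are free subject only to $\sum_{j\in Z}w_j=0$, and so $\dim W=k-1$. Note that $h\equiv 0$ is automatic when $k\ge 2$. When $k=1$, $h\equiv\prod_{i\ne j}p_i\not\equiv 0$, which again gives $n$ colorings.

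\textbf{Step 3: match with the statement.} The coloring number is $n^{k}$ for $k\ge 2$, and $n$ or $n^2$ (according to $h$) for $k=0$. The case $k=m$ gives $n^m$, and the case $k=m-1$, i.e. exactly one $p_i$ coprime to $n$, gives $n^{m-1}$, matching the last two lines of the statement.

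The line with exactly two $p_i$ coprime to $n$ corresponds to $k=m-2$ and gives $n^{m-2}$. The $n^2$ line corresponds to $h\equiv 0$ together with not all $(m-2)$-fold products vanishing, i.e. $k\le 2$. Here I would pin down the intermediate ``$\vdots$'' lines by identifying the condition ``all $(m-j)$-fold products vanish'' with $k\ge j+1$, using the $(j+1)\times(j+1)$-minor criterion as in case (iii) of Theorem \ref{th4p}. I would then reconcile the boundary cases, e.g. $k=0$ versus $k=2$ both yielding $n^2$, explicitly, so that the exponent is read off as $\max(k,1+[k=0,\,h\equiv 0])$.
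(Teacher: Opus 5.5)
Your proposal is correct, and it takes a genuinely different route from the paper.

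\textbf{The paper's route.} The paper keeps the asymmetric variables $y_1,\dots,y_m$, changes variables to get an $(m-1)\times(m-1)$ matrix $G$, and classifies by $\operatorname{rank} G$.
\begin{itemize}
\item It asserts, without computing them, that all $j\times j$ minors of $G$ vanish exactly when all $j$-fold products $p_{i_1}\cdots p_{i_j}$ vanish.
\item It treats ranks $0,1,2$ directly and fills in the intermediate ranks ``by analogy.''
\item It actually solves the system only at rank $m-2$, splitting into $p_{m-1}\equiv p_m\equiv 0$ versus all $p_i$ units.
\item Odd and even $m$ get separate systems, and the second case is dismissed as ``similar.''
\end{itemize}

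\textbf{Your route.} You symmetrize the system to $W=\{p_1w_1=\cdots=p_mw_m,\ \sum w_i=0\}$ and compute $\dim W$ by splitting on $k=|\{i:p_i\equiv 0\}|$. What this buys:
\begin{itemize}
\item Every line of the statement, including the ``$\vdots$'' lines, becomes rigorous without any minor identities.
\item All $m$ are handled uniformly.
\item You get the compact answer $n^{\max(k,1)}$, except $n^2$ when $k=0$ and $h\equiv 0$.
\item It shows why the list degenerates at the $n^2$ line: an exponent $q\ge 3$ occurs exactly when $k=q$, while exponent $2$ combines $k=2$ with the case ($k=0$, $h\equiv 0$).
\end{itemize}
The paper's route stays inside the matrix framework of its $m=3,4$ computations, but it pays for that with the unproved minor claims.

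\textbf{Step 1 is routine, and your sign hedges can be dropped.} With your convention, the colors leaving $A_i$ at SW and SE are $u_i+p_iw_i$ and $u_i+(p_i+1)w_i$, for either parity of $p_i$.
\begin{itemize}
\item The top gluing is $u_{i+1}=v_i$ (cyclically), so $\sum_i w_i=0$ is just telescoping.
\item The bottom gluing $v_i+p_iw_i=u_{i+1}+p_{i+1}w_{i+1}$ gives exactly $p_iw_i=p_{i+1}w_{i+1}$.
\item Orientation plays no role for the dihedral quandle. Crossing handedness only replaces $p_i$ by $-p_i$, so your method also covers the mixed-sign pretzel links, with $h=e_{m-1}$ of the signed $p_i$.
\end{itemize}

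\textbf{Step 3 needs no minor criterion.} Since $n$ is prime, ``all $(m-j)$-fold products vanish'' is equivalent to $k\ge j+1$ by a purely combinatorial count.
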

	\begin{proof}
		For any given pretzel link $P^m=(p_{1},p_{2},...,p_{m})$ and dihedral quandle $\mathbb{Z}_{n}$ where $n$ is prime. We denote the fundamental quandle of $P^m$ as $\mathcal{Q}(P^m)$. Then, for any $f\in Hom(\mathcal{Q}(P^m),\mathbb{Z}_{n})$, similar to Theorem \ref{th4p} we first examine the image set of $f$, that is, coloring the entire $P^m$ with the dihedral quandle, which yields the coloring diagram for each arc:\par 
		\begin{figure}[H]
			\centering
			\includegraphics[width=1\linewidth]{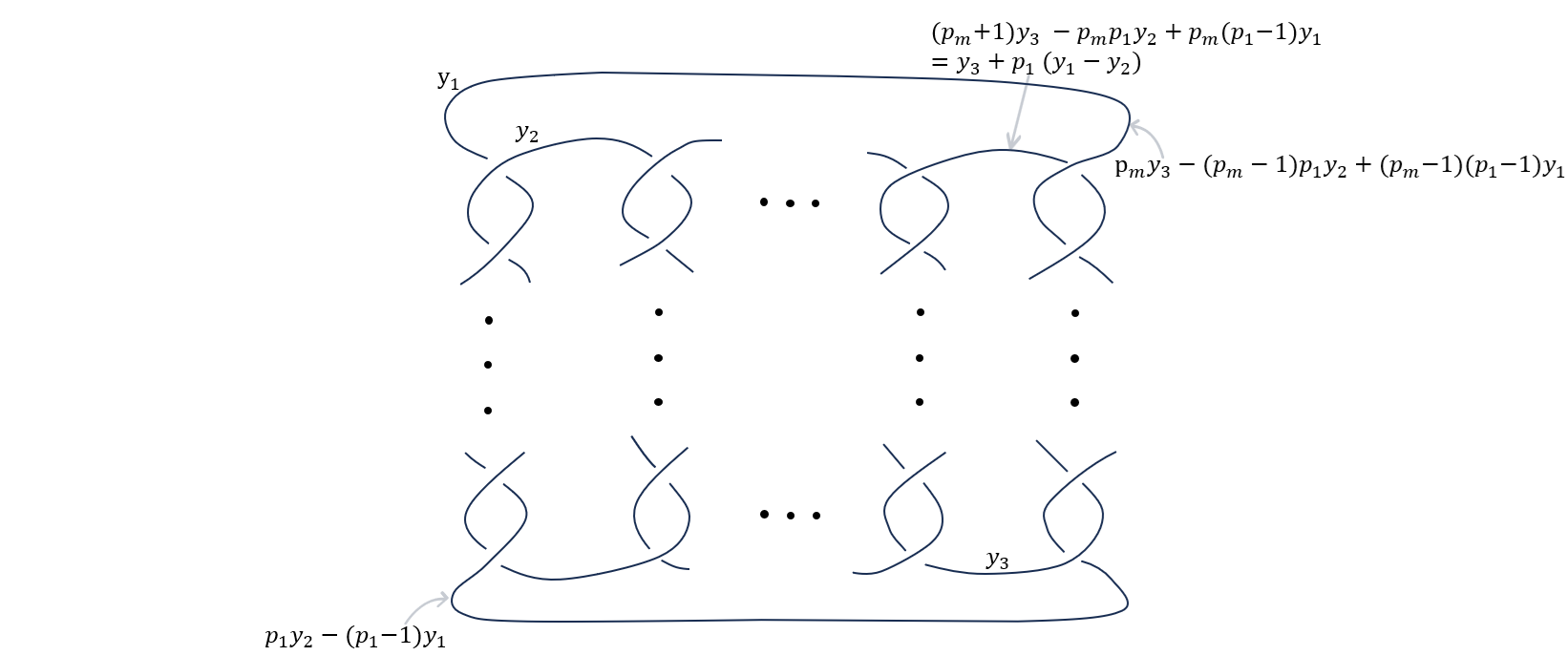}
			\caption{Coloring diagram of $P^m$ using the dihedral quandle $\mathbb{Z}_{n}$}
			\label{tnp}
		\end{figure}\par
		Furthermore, we can see that the image of $f$ is determined solely by the colorings of the $m$ arcs, meaning the image of $f$ depends on the values of $y_{1},y_{2},...,y_{m}$, while the colorings of the remaining arcs are linear combinations of them. Let $f(x_{1})=y_{1},f(x_{2})=y_{2},...,f(x_{m})=y_{m}$, where $x_{1},x_{2},...,x_{m}\in \mathcal{Q}(P^m)$, and $y_{1},y_{2},...,y_{m}\in \mathbb{Z}_{n}$. Then, when $m$ is an odd number, we obtain the coloring equation system for $P^m$:
		\begin{equation}
			\begin{cases}
				p_{1}(p_{m}-1)(y_{1}-y_{2})+p_{m}(y_{3}-y_{1})\equiv 0,\\
				p_{1}(p_{m-1}+1)(y_{1}-y_{2})+p_{m-1}(y_{3}-y_{4})\equiv 0,\\
				p_{1}(m_{m-2}-1)(y_{1}-y_{2})+p_{m-2}(y_{5}-y_{4})\equiv 0,\\
				\qquad \cdots \\
				p_{1}(p_{3}-1)(y_{1}-y_{2})+p_{3}(y_{m}-y_{m-1})\equiv 0,\\
				p_{1}(p_{2}+1)(y_{1}-y_{2})+p_{2}(y_{m}-y_{2})\equiv 0.
			\end{cases}
			\label{np1}
		\end{equation}
		When $m$ is an even number, we obtain the coloring equation system for $P^m$:
		\begin{equation}
			\begin{cases}
				p_{1}(p_{m}-1)(y_{1}-y_{2})+p_{m}(y_{3}-y_{1})\equiv 0,\\
				p_{1}(p_{m-1}+1)(y_{1}-y_{2})+p_{m-1}(y_{3}-y_{4})\equiv 0,\\
				p_{1}(m_{m-2}-1)(y_{1}-y_{2})+p_{m-2}(y_{5}-y_{4})\equiv 0,\\
				\qquad \cdots \\
				p_{1}(p_{3}-1)(y_{1}-y_{2})+p_{3}(y_{m}-y_{m-1})\equiv 0,\\
				p_{1}(y_{1}-y_{2})+p_{2}(y_{m}-y_{2})\equiv 0.
			\end{cases}
			\label{el31}
		\end{equation}
		Let us first solve the system of congruences (\ref{el31}). Given that it consists of $m$ equations in $m-1$ unknowns, we begin by making a change of variables:
		\begin{equation}
			\begin{cases}
				y_{1}-y_{2}=z_{1},\\
				y_{3}-y_{2}=z_{2},\\
				\qquad\cdots \\
				y_{m}-y_{2}=z_{m-1},\\
				y_{m}=z_{m}.
			\end{cases}
			\nonumber
		\end{equation}
		In matrix form, this can be expressed as:
		\begin{equation}
			\left[
			\begin{array}{c}
				z_{1}\\
				z_{2} \\
				\vdots \\
				z_{m-1}\\
				z_{m}
			\end{array}
			\right]
			=
			\begin{bmatrix}
				1&-1&\cdots &0&0\\
				0&-1&\cdots &0&0\\
				\vdots &\vdots &\ddots &\vdots &\vdots \\
				0&-1&\cdots &0&1\\
				0&0 &\cdots &0&1
			\end{bmatrix}
			\left[
			\begin{array}{c}
				y_{1} \\ 
				y_{2} \\
				\vdots \\
				y_{m-1}\\
				y_{m}
			\end{array}
			\right]
			=
			D
			\left[
			\begin{array}{c}
				y_{1}\\ 
				y_{2}\\
				\vdots \\
				y_{m-1}\\
				y_{m}
			\end{array}
			\right]
			\nonumber 
		\end{equation}
		Since $det(D)=1$, the matrix $D$ is invertible. Consequently, the system of congruences (\ref{el31}) is equivalent to the following system (\ref{el32}):
		\begin{equation}
			\begin{cases}
				p_{1}(p_{m}-1)z_{1}+p_{m}(z_{2}-z_{1})\equiv 0,\\
				p_{1}(p_{m-1}+1)z_{1}+p_{m-1}(z_{2}-z_{3})\equiv 0,\\
				p_{1}(p_{m-2}-1)z_{1}+p_{m-2}(z_{4}-z_{3})\equiv 0,\\
				\qquad \cdots \\
				p_{1}(p_{3}-1)z_{1}+p_{3}(z_{m-1}-z_{m-2})\equiv 0,\\
				p_{1}(p_{2}+1)z_{1}+p_{2}z_{m-1}\equiv 0.
			\end{cases}
			\label{el32}
		\end{equation}
		The system of congruences (\ref{el32}) can be represented in matrix form as:
		\begin{equation}
			\begin{bmatrix}
				p_{1}(p_{m}-1)-p_{m}&p_{m}&\cdots &0 &0\\
				p_{1}(p_{m-1}+1)&p_{m-1}&\cdots &0 &0\\
				\vdots &\vdots &\ddots &\vdots &\vdots \\
				p_{1}(p_{3}-1)&0&\cdots &-p_{3}&p_{3}\\
				p_{1}(p_{2}+1)&0&\cdots &0&p_{2}
			\end{bmatrix}
			\left[
			\begin{array}{c}
				z_{1} \\ 
				z_{2} \\
				\vdots \\
				z_{m-1}
			\end{array}
			\right]
			=
			G
			\left[
			\begin{array}{c}
				z_{1} \\ 
				z_{2} \\
				\vdots \\
				z_{m-1}
			\end{array}
			\right]
			\equiv
			\left[
			\begin{array}{c}
				0\\
				0\\
				\vdots \\
				0
			\end{array}
			\right]
			\label{elm}
		\end{equation}
		\par 
		We proceed to analyze the rank of matrix $E$:\par 
		\begin{romanlist}[(ii)]
			\item If $det(G)\not \equiv 0$, hen $G$ has full rank, and consequently the homogeneous system of congruences (\ref{el32}) admits only the trivial solution. In this case, we have $|Hom(P^m,\mathbb{Z}_{n})|=n$.\par 
			\item If the rank of $G$ is 0, then every element of $G$ is a multiple of $n$, i.e., $p_{1}\equiv p_{2}\equiv ...p_{m}\equiv 0$. In this case, the solutions to the system of congruences (\ref{el32}) are arbitrary elements $z_{1},z_{2},...,z_{m}\in \mathbb{Z}_{n}$. It follows immediately that $|Hom(P^m,\mathbb{Z}_{n})|=n^{m}$.\par 
			\item If the rank of $G$ is 1, then the determinant of every $2\times 2$ submatrix of $G$ is a multiple of $n$, i.e.,
			\begin{equation}
				p_{i}p_{j}\equiv 0\qquad (there\ are\ C_{m}^{2}\ distinct\ pairs\ of\ indices\ (i,j)\ where i<j).
				\label{ld2}
			\end{equation}
			Since $rank(G)=1$, at least one of $p_{1},p_{2},...,p_{m}$ is not a multiple of $n$. Without loss of generality, assume $p_{1}\not \equiv 0$. Then, from (\ref{ld2}), it follows that $p_{2}\equiv p_{3}\equiv ...p_{m}\equiv 0$. In this case, the solution to the system of equations (\ref{el32}) is:
			\begin{equation}
				\begin{cases}
					z_{1}\equiv 0,\\
					z_{2},...,z_{m}\in \mathbb{Z}_{n}.
				\end{cases}
				\nonumber
			\end{equation}
			It is straightforward to see that in this case, we have $|Hom(P^m,\mathbb{Z}_{n})|=n^{m-1}$. Furthermore, we can conclude that when $rank(G)=1$, among $p_{1},p_{2},...,p_{m}$ exactly one is coprime with $n$, while all the others are multiples of $n$.
			\item If the rank of $G$ is 2, then at least one equation in (\ref{ld2}) does not hold, and all third-order minors of matrix $G$ have determinants that are multiples of $n$, i.e.,
			\begin{equation}
				p_{i}p_{j}p_{k}\equiv 0\quad (there\ are\ C_{m}^{3}\ distinct\ pairs\ of\ indices\ (i,j,k)\ where\ i<j<k).
				\label{ld3}
			\end{equation}
			Without loss of generality, we assume $p_{1}p_{2}\not \equiv 0$. Then, we have $p_{3}\equiv p_{4}\equiv ...p_{m}\equiv 0$ and the solution to the congruence system (\ref{el32}) is:
			\begin{equation}
				\begin{cases}
					z_{1}\equiv 0,\\
					z_{m-1}\equiv 0,\\
					z_{2},...,z_{m-2}\in \mathbb{Z}_{n}.
				\end{cases}
				\nonumber
			\end{equation}
			Thus, in this case we have $|Hom(P^m,\mathbb{Z}_{n})|=n^{m-2}$, and among $p_{1},p_{2},...,p_{m}$, exactly two are coprime with n, while the remaining ones are multiples of $n$.\par 
			\item  By analogy, we assume that the conclusion holds when the rank of $G$ is m-3.\par 
			\item When the rank of $G$ is $m-2$, we have $det(G)\equiv 0$, and at least one of the $m-2$-th order minors of $G$ is not divisible by n. That is:
			\begin{equation}
				p_{1}p_{2}...p_{m-2}\equiv ...\equiv p_{3}p_{4}...p_{m}\equiv 0.
				\label{ld4}
			\end{equation}
			The congruence relations (\ref{ld4}) do not all hold simultaneously. Without loss of generality, assume $p_{1}p_{2}...p_{m-2}\not \equiv 0$, then it follows that each $p_{1},...p_{m-2}$ is coprime with $n$.\par 
			\begin{romanlist}[(b)]
				\item  When $p_{m-1}\not \equiv 0$ and $p_{m}\equiv 0$, we have $det(G)=p_{1}...p_{m-1}\not \equiv 0$ which contradicts the assumption. Similarly, the case $p_{m-1}\equiv 0$ and $p_{m}\not \equiv 0$ is also impossible.\par 
				\item If $p_{m-1}\equiv p_{m}\equiv 0$, then the system of congruences (\ref{el31}) is equivalent to:
				\begin{equation}
					\begin{cases}
						y_{1}\equiv y_{2},\\
						p_{1}(m_{m-2}-1)(y_{1}-y_{2})+p_{m-2}(y_{5}-y_{4})\equiv 0,\\
						\qquad \cdots \\
						p_{1}(p_{3}-1)(y_{1}-y_{2})+p_{3}(y_{m}-y_{m-1})\equiv 0,\\
						p_{1}(p_{2}+1)(y_{1}-y_{2})+p_{2}(y_{m}-y_{2})\equiv 0.
					\end{cases}
					\nonumber
				\end{equation}
				Since $p_{1},...p_{m-2}$ are all coprime with $n$, it is straightforward to verify that the matrix $H=\begin{bmatrix}
					-p_{m-2}&p_{m-2}&\cdots &0&0\\
					\vdots &\vdots& \ddots &\vdots&\vdots\\
					0&0&\cdots &-p_{3}&p_{3}\\
					0&0&\cdots &0&p_{2}
				\end{bmatrix}$ is invertible. That is, there exists an invertible matrix $H^{-1}$ over $\mathbb{Z}_{n}$ such that $HH^{-1}=E$ (the identity matrix). Consequently, we have:
				\begin{equation}
					\left[
					\begin{array}{c}
						z_{3}\\
						\vdots\\
						z_{m-1}
					\end{array}
					\right]
					=H^{-1}
					\begin{bmatrix}
						p_{1}(p_{m-2}-1)&0&\cdots &0\\
						0&p_{1}(p_{m-3}+1)&\cdots &0\\
						\vdots &\vdots&\ddots &\vdots\\
						0&0&\cdots &p_{1}(p_{2}+1)
					\end{bmatrix}
					\left[
					\begin{array}{c}
						z_{1}\\
						z_{1}\\
						\vdots\\
						z_{1}
					\end{array}
					\right]
					\nonumber
				\end{equation}
				Therefore, we conclude that in this case $|Hom(P^m,\mathbb{Z}_{n})|=n^2$.\par 
				\item  When $p_{m-1}\not \equiv 0$ and $p_{m}\not \equiv 0$, all $p_{1},...p_{m}$ become invertible elements in $\mathbb{Z}_{n}$. Thus, the matrix $I=\begin{bmatrix}
					p_{m-1}&-p_{m-1}&\cdots &0&0\\
					0&-p_{m-2}&\cdots &0&0\\
					\vdots &\vdots &\ddots &\vdots &\vdots \\
					0&0&\cdots &-p_{3}&p_{3}\\
					0&0&\cdots &0&p_{2}
				\end{bmatrix}$ is invertible. Furthermore, since the rank of matrix $G$ is $m-2$ in this case, the solution to the system of congruences (\ref{el32}) is given by:
				\begin{equation}
					\left[
					\begin{array}{c}
						z_{2}\\
						z_{3}\\
						\vdots \\
						z_{m-1}
					\end{array}
					\right]
					=I^{-1}
					\begin{bmatrix}
						p_{1}(p_{m-1}+1)&0&\cdots &0\\
						0&p_{1}(p_{m-2}-1)&\cdots &0\\
						\vdots &\vdots&\ddots &\vdots\\
						0&0&\cdots &p_{1}(p_{2}+1)
					\end{bmatrix}
					\left[
					\begin{array}{c}
						z_{1}\\
						z_{1}\\
						\vdots\\
						z_{1}
					\end{array}
					\right]
					\nonumber
				\end{equation} 
				Thus, in this case, the solution to the congruence system (\ref{el31}) has two free variables $y_{1}$ and $y_{2}$, which leads to $|Hom(P^m,\mathbb{Z}_{n})|=n^2$.\par 
			\end{romanlist}
		\end{romanlist}
		\par 
		Similarly to the discussion of the system of congruence equations (\ref{el31}), we can conclude that the result of the system of congruence equations (\ref{np1}) is identical to that of (\ref{el31}).\par 
		This completes the proof.
	\end{proof}
	\eject
	
	\noindent
	
	\section{The quandle coloring quivers of pretzel links}
	
		\begin{theorem}\label{th4.1}
		Given an oriented pretzel link $P_1^3=(p_{1},p_{2},p_{3})$ and a dihedral quandle $\mathbb{Z}_{n}$, where $p_{1},p_{2},p_{3},n\in \mathbb{Z}^{+}$. Let $P=p_1p_2+p_1p_3+p_2p_3,Q=\gcd(p_{1},p_{2},p_3),d_{1}=\gcd(Q,n),d_2=\gcd(\frac{|P|}{Q},n)$. If $d_1,d_2$ are primes, then the full quandle coloring quiver of $P_1^3$ is\\ $$\mathcal{Q}_{\mathbb{Z}_{n}}(P_1^3)=(G_{1}\overleftarrow{\nabla}_{\widehat{\frac{n}{d_{1}}}}G_{2})\bigsqcup (G_{1}\overleftarrow{\nabla}_{\widehat{\frac{n}{d_{2}}}}G_{3})\bigsqcup(G_{1}\overleftarrow{\nabla}_{\widehat{\frac{n}{[d_1,d_2]}}}G_{4}),$$ where  $G_{1}=(\overleftrightarrow{K_{n}},\hat{n}),\ G_{2}=[\bigsqcup_{2}(\overleftrightarrow{K_{(d_{1}-1)n}},\widehat{\frac{n}{d_{1}}})],\ G_3=(\overleftrightarrow{K_{(d_{2}-1)n}},\widehat{\frac{n}{d_2}}),\ G_{4}=(\overleftrightarrow{K_{(d_{1}-1)(d_{2}-2)n}},\widehat{\frac{n}{[d_1,d_2]}})$, where $[d_1,d_2]$ is the least common multiple of $d_1$ and $d_2$.
	\end{theorem}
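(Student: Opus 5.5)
The plan is to make the coloring space explicit as a module, and then read off the quiver from the action of $Hom(\mathbb{Z}_n,\mathbb{Z}_n)$. From the proof of Theorem \ref{th3}, every coloring is determined by a triple $v=(a,b,c)\in\mathbb{Z}_n^3$ satisfying system (\ref{el}). After the invertible substitution $x=a-b$, $y=c-b$ and the Smith normal form $A=UDV$ of system (\ref{el2}), the coloring space decomposes as $\mathbb{Z}_n\times K$. Here the first factor is the "translation" coordinate, i.e. the constant colorings. The second factor is $K\cong \mathbb{Z}_{d_1}\times\mathbb{Z}_{d_2}$, the solution group of $d_1x'\equiv d_2y'\equiv 0$ after applying $V$. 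This is consistent with the count $d_1d_2n$.

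Next I will recall that the quandle endomorphisms of the dihedral quandle $\mathbb{Z}_n$ are exactly the affine maps $\phi_{\alpha,\beta}(t)=\alpha t+\beta$, giving $n^2$ of them. Composing $\phi_{\alpha,\beta}$ with a coloring $v$ gives $\alpha v+\beta(1,1,1)$. In the coordinates above, the difference part $w\in K$ is sent to $\alpha w$, and the translation coordinate is sent to $\alpha t+\beta$. Hence every vertex has out-degree $n^2$ counted with multiplicity, and the edge multiplicity from $f$ to $g$ equals the number of pairs $(\alpha,\beta)$ solving $\alpha w_f=w_g$ and $\alpha t_f+\beta=t_g$. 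Since $\beta$ is determined uniquely once $\alpha$ is fixed, this multiplicity is the number of $\alpha\in\mathbb{Z}_n$ with $\alpha w_f=w_g$.

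The case analysis then runs as follows.

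\begin{itemize}
\item For trivial colorings ($w_f=0$), every $\alpha$ works for each trivial target. This gives $c\equiv n$ on the $n$ trivial vertices, which is $G_1=(\overleftrightarrow{K_n},\hat n)$, and there are no edges leaving $G_1$.
\item For $w_f\neq 0$, let $d$ be its order. Since $d_1,d_2$ are prime, $d\in\{d_1,d_2,[d_1,d_2]\}$. The $\alpha$ with $\alpha\equiv 0 \pmod d$ number $n/d$, and each sends $f$ to every trivial vertex. This yields the $\overleftarrow{\nabla}_{\widehat{n/d}}$ edges into $G_1$.
\item The $\alpha$ with $\alpha$ a unit mod $d$ send $w_f$ onto the nonzero elements of the cyclic group $\langle w_f\rangle$. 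Each such target is hit by exactly $n/d$ values of $\alpha$. So the vertices $\{(t,w): w\in\langle w_f\rangle\setminus 0\}$ form a complete pseudograph on $(d-1)n$ vertices with constant weight $n/d$, loops included.
\end{itemize}

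The other values of $\alpha$ (nonzero mod $d$ but not units) cannot occur because $d$ is prime or a product of two distinct primes. When $d=[d_1,d_2]$ is composite I must also check the intermediate multiples $\alpha w_f$ of order $d_1$ or $d_2$; these produce edges into the other blocks.

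The main obstacle is the final bookkeeping: partitioning $K\setminus\{0\}$ into the cyclic "lines" $\langle w\rangle\setminus 0$ and matching the counts with $G_2,G_3,G_4$. In particular this requires explaining why there are two copies of $K_{(d_1-1)n}$ and why the top block has $(d_1-1)(d_2-2)n$ vertices. I would first treat the generic case $d_1\neq d_2$, where $K\cong\mathbb{Z}_{d_1d_2}$ is cyclic, by listing its elements of each order. I would then treat $d_1=d_2$ separately, where $K\cong\mathbb{Z}_{d}^2$ splits into $d+1$ lines. After that, I would verify that the vertex totals sum to $d_1d_2n$ and that all out-degrees equal $n^2$ as a consistency check. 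If the stated counts disagree with this orbit count in some case, I will adjust the case hypotheses rather than the method.
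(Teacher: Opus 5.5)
\textbf{There is a genuine gap: the final matching step cannot be carried out, because the statement is false for $d\geq 5$.}

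Your framework is sound, and it takes a different and cleaner route than the paper. The paper splits the nontrivial colorings according to which of $a_1,a_2,a_3$ coincide, and solves congruences for $\tau=f(a_1+1)$ case by case. You instead write the coloring space as $\mathbb{Z}_n\times K$ and read off edge multiplicities as $\#\{\alpha:\alpha w_f=w_g\}$.

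The gap is exactly the step you flag as the main obstacle, the matching with $G_2,G_3,G_4$. You also missed that $d_1\mid d_2$ always holds. Indeed $Q^2$ divides every $p_ip_j$, so $Q\mid P/Q$, hence $\gcd(Q,n)\mid\gcd(P/Q,n)$; this is just $e_1\mid e_2$ in the Smith form. With both prime, $d_1=d_2=:d$ and $[d_1,d_2]=d$. So your ``generic case $d_1\neq d_2$'' with $K$ cyclic never occurs. Had it occurred, your own analysis shows that generators of $\mathbb{Z}_{d_1d_2}$ send edges into the blocks of order $d_1$ and $d_2$, which is not of the stated disjoint form either.

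In the only real case, $K=\{(x,y):dx\equiv dy\equiv 0\}\cong\mathbb{Z}_d^2$ in the coordinates $x=a-b$, $y=c-b$. Your orbit argument then yields
\[
\mathcal{Q}_{\mathbb{Z}_n}(P_1^3)=(\overleftrightarrow{K_n},\hat{n})\,\overleftarrow{\nabla}_{\widehat{n/d}}\Bigl[\bigsqcup_{d+1}(\overleftrightarrow{K_{(d-1)n}},\widehat{n/d})\Bigr].
\]
The lines $y=0$ and $x=y$ are the two copies in $G_2$, and the line $x=0$ is $G_3$. The remaining $d-2$ lines contain $(d-1)(d-2)n$ vertices in total, but they are pairwise disconnected. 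So they form one complete block $G_4$ only when $d\in\{2,3\}$.

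For $d\geq 5$ the stated $G_4$ cannot occur at all. A vertex in it would have out-degree $\frac{n}{d}\bigl((d-1)(d-2)n+n\bigr)$, which is not $n^2=|Hom(\mathbb{Z}_n,\mathbb{Z}_n)|$. For instance, take $(5,5,5)$ with $n=5$:
\begin{itemize}
\item $d_1=d_2=5$, and every triple $(a,b,c)$ is a coloring.
\item The quiver is $(\overleftrightarrow{K_5},\hat{5})\overleftarrow{\nabla}_{\hat{1}}[\bigsqcup_6(\overleftrightarrow{K_{20}},\hat{1})]$. This agrees with the paper's own final theorem for $m=3$ and all $p_i\equiv 0$.
\item The statement instead asserts a complete block on $60$ vertices, whose vertices would have out-degree $65\neq 25$.
\end{itemize}
The paper's proof goes wrong in its case (iv), which puts all colorings with $a_1,a_2,a_3$ pairwise distinct into one complete block.

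So no adjustment of your method will prove the theorem as written. What it proves is the corrected formula above. That formula agrees with the statement precisely for $d\in\{2,3\}$; in general $G_4$ must be replaced by $\bigsqcup_{d-2}(\overleftrightarrow{K_{(d-1)n}},\widehat{n/d})$.
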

	\begin{proof}
		For a given pretzel link $P_1^3$ and a dihedral quandle $\mathbb{Z}_{n}$, by Theorem \ref{th3}, we have $|Hom(P_1^3,\mathbb{Z}_n)|=d_{1}d_2n$, where $d_{1}=\gcd(Q,n),d_2=\gcd(\frac{|P|}{Q},n),P=p_1p_2+p_1p_3+p_2p_3,Q=\gcd(p_{1},p_{2},p_3)$, which includes $n$ trivial colorings. It is easy to see that in this case, the quiver structure of $P_1^3$ contains a substructure $G_{1}=(\overleftrightarrow{K_{n}},\hat{n})$.\par 
		Next, we consider the case of the $(d_{1}d_2-1)n$ nontrivial colorings. Let $\phi _{a}\equiv \phi _{a_{1},a_{2},a_{3}},\phi _{b}\equiv \phi _{b_{1},b_{2},b_{3}}$ be any two quandle colorings in $Hom(P_1^3,\mathbb{Z}_n)$, i.e., $\phi _{a}(x_{i})\equiv a_{i},\phi _{b}(x_{i})\equiv b_{i},x_{i}\in \mathcal{Q}(P_1^3)$ where $a_{i},b_{i}\in \mathbb{Z}_{n},i=1,2,3$.\par 
		Without loss of generality, we assume that $\phi_{a}$ is a nontrivial quandle coloring. Then, from the proof of Theorem \ref{th3}, it follows that $d_{1}a_{1}\equiv d_{1}a_{2},d_2a_3\equiv d_{2}a_{2}$, and at least one of the inequalities $a_{1}\not \equiv a_{2},a_{1}\not \equiv a_{3},a_{2}\not \equiv a_{3}$ must hold. Otherwise, $\phi_{a}$ would be a trivial coloring.
		\begin{romanlist}[(i)]
			\item 	When $a_{2}\equiv a_{3}\not \equiv  a_{1},b_{2}\equiv b_{3}\not \equiv b_{1}$, from $d_{1}a_{1}\equiv d_{1}a_{2}$, we obtain $a_{2}\equiv a_{1}+k\frac{n}{d_{1}},k=1,2,...,d_{1}-1$. Similarly, it follows that $b_{2}\equiv b_{1}+\beta \frac{n}{d_{1}},\beta=1,2,...,d_{1}-1$.\par 
			Suppose there exists a homomorphism $f=f_{\alpha_{0}\alpha_{1}...\alpha_{n-1}}\in Hom(\mathbb{Z}_{n},\mathbb{Z}_{n})$ such that $f\circ \phi _{a}=\phi _{b}$, i.e., $f(a_{1})\equiv b_{1},f(a_{1}+k\frac{n}{d_{1}})\equiv b_{1}+\beta \frac{n}{d_{1}}$. Then, there exists $\tau \in \mathbb{Z}_{n}$ satisfying the following system of equations:
			\begin{center}
				\begin{tabular}{l}
					$f_{\alpha_{0}\alpha_{1}...b_{1}\tau...\alpha_{n-1}}(a_{1})\equiv b_{1},$\\
					$f_{\alpha_{0}\alpha_{1}...b_{1}\tau...\alpha_{n-1}}(a_{1}+1)\equiv \tau,$\\
					$f_{\alpha_{0}\alpha_{1}...b_{1}\tau...\alpha_{n-1}}(a_{1}+2)\equiv 2\tau -b_{1},$\\
					$f_{\alpha_{0}\alpha_{1}...b_{1}\tau...\alpha_{n-1}}(a_{1}+3)\equiv 3\tau -2b_{1},$\\
					\qquad\;\;\ ...\\
					$f_{\alpha_{0}\alpha_{1}...b_{1}\tau...\alpha_{n-1}}(a_{1}+k \frac{n}{d_{1}})\equiv k\frac{n}{d_{1}} \tau -(k\frac{n}{d_{1}}-1)b_{1},$\\
				\end{tabular}
			\end{center}
			Since $f(a_{1}+k\frac{n}{d_{1}})\equiv b_{1}+\beta \frac{n}{d_{1}}$, we have: $b_{1}+\beta \frac{n}{d_{1}}\equiv k\frac{n}{d_{1}} \tau -(k\frac{n}{d_{1}}-1)b_{1}$. Simplifying the equation, we obtain:
			\begin{equation}
				k\frac{n}{d_{1}}\tau \equiv k\frac{n}{d_{1}}b_{1}+\beta \frac{n}{d_{1}}.
				\label{d4.1}
			\end{equation}
			Let $A=k\frac{n}{d_{1}},B=k\frac{n}{d_{1}}b_{1}+\beta \frac{n}{d_{1}}$, the equation (\ref{d4.1}) has solutions if and only if $\gcd(A,n)|B$. If it is solvable, the number of distinct solutions modulo $n$ is exactly equal to $\gcd (A,n)$.
			Clearly, we have:
			$$\gcd(A, n) = \gcd\left(k\frac{n}{d_1}, n\right) = \frac{n}{d_1}\gcd(k, d_1)$$
			and
			\begin{align*}
				\gcd(A,n)|B=&\frac{n}{d_1}\gcd(k, d_1) \;\Big|\; \frac{n}{d_1}(kb_1 + \beta)\\
				\iff &\gcd(k, d_1) \mid (kb_1 + \beta)\\
				\iff&\gcd(k, d_1) \mid \beta
			\end{align*}
			Since $d_1$ is a prime number, and $k=1,2,...,d_1-1$, it follows that $gcd(k,d_1)=1$. Consequently, $\gcd(k, d_1) \mid \beta$ holds, and the congruence equation (\ref{d4.1}) has $\frac{n}{d_1}$ solutions. \par
			Since the homomorphism f is uniquely determined by $b_{1}$ and $\tau$, where $b_{1}$ is an arbitrary fixed value, $\tau \in\mathbb{Z}_{n}$, there exist exactly $\frac{n}{d_1}\gcd (k, d_1)$ distinct homomorphisms $f\in Hom(\mathbb{Z}_{n},\mathbb{Z}_{n})$ satisfying $f\circ \phi _{a}=\phi _{b}$. Meanwhile, since $a_{2}\in \mathbb{Z}_{n}$ there are $(d_{1}-1)n$ distinct possible values for $a_{1}$ or $a_{3} $, i.e., in this case, there are $(d_{1}-1)n$ quandle coloring vertices. Therefore, $\mathcal{Q}_{\mathbb{Z}_{n}}(P_1^3)$ contains a subgraph $(\overleftrightarrow{K_{(d_{1}-1)n}},\widehat{\frac{n}{d_1}})$.
			\item When $a_{1}\equiv a_{2}\not \equiv a_{3},b_{1}\equiv b_{2}\not \equiv b_{3}$, we can obtain $a_{3}\equiv a_{2}+l\frac{n}{d_{2}},l=1,2,...,d_{2}-1$ and similarly, $b_{3}\equiv b_{2}+\beta^{\prime}\frac{n}{d_{2}},\beta^{\prime}=1,2,...,d_{2}-1$. Following analogous reasoning to the previous case, we can also contains a subgraph $G_3=(\overleftrightarrow{K_{(d_{2}-1)n}},\widehat{\frac{n}{d_2}})$ of $\mathcal{Q}_{\mathbb{Z}_{n}}(P_1^3)$.
			\item When $a_{1}\equiv a_{3}\not \equiv a_{2},b_{1}\equiv b_{3}\not \equiv b_{2}$, similarly to the above case, we can deduce that there are also $(d_{1}-1)n$ non-trivial quandle colorings at the vertices in this scenario. And we have
			\begin{equation}
				\begin{cases}
					k\frac{n}{d_{1}}\equiv l\frac{n}{d_{2}},\qquad \ \,(1a)\\
					\beta \frac{n}{d_{1}}\equiv \beta^{\prime}\frac{n}{d_{2}}.\qquad(1b)
				\end{cases}
				\label{4.2}
			\end{equation}
			\begin{equation}
				\begin{cases}
					k\frac{n}{d_{1}}\tau \equiv k\frac{n}{d_{1}}b_{2}+\beta \frac{n}{d_{1}},\qquad k,\beta =1,2,...,d_{1}-1,\qquad(2a)\\
					l\frac{n}{d_{1}}\tau \equiv l\frac{n}{d_{1}}b_{2}+\beta^{\prime} \frac{n}{d_{1}}. \qquad l,\beta^{\prime}=1,2,...,d_{2}-1.\qquad(2b)
				\end{cases}
				\label{4.3}
			\end{equation}
			First, we examine the system of equations (\ref{4.2}):
			\begin{align*}
				(1a)
				&\iff there\ exists\ an\ integer\ m\in \mathbb{Z}\ such \ that\ k\frac{n}{d_{1}}-l\frac{n}{d_{1}}=mn\\
				&\iff(\frac{k}{d_{1}}-\frac{l}{d_{2}})n=mn
			\end{align*} \\
			Since $k=1,2,...,d_{1}-1,l=1,2,...,d_{2}-1$ the only possibility is $\frac{k}{d_{1}}-\frac{l}{d_{2}}=0$,i.e.,
			\begin{equation}
				\frac{k}{d_{1}}=\frac{l}{d_{2}}.
				\label{4.4}
			\end{equation}
			Similarly, from (1b) we obtain: 
			\begin{equation}
				\frac{\beta }{d_{1}}=\frac{\beta^{\prime}}{d_{2}}.
				\label{4.5}
			\end{equation}
			Thus, the system of equations (\ref{4.3}) is equivalent to either (2a) or (2b). This conclusion appears strange,since (2a) and (2b) evidently yield distinct quiver structures. Consequently, the system (\ref{4.2}) warrants further investigation to resolve this apparent discrepancy.\par 
			Without loss of generality, we assume that $d_{1}<d_{2}$, if $d_{1}\nmid d_{2}$ then there exist integers $r<d_{1}$ and $q$ such that $d_{2}=qd_{1}+r$, substituting this into equation (\ref{4.4}) yields:
			\begin{align*}
				\frac{k}{d_{1}}=\frac{l}{qd_{1}+r}
				&\iff kqd_{1}+kr=d_{1}l\\
				&\iff (l-kq)d_{1}=kr
			\end{align*}
			Since $\gcd(k,d_{1})=1$, it follows that $r=0$ and $d_1|d_2$, i.e., $d_{1}=1$. Then from (\ref{4.4}) we obtain $qk=l$, and consequently, the system (\ref{4.3}) is equivalent to (2a). It then follows from (i) that $\mathcal{Q}_{\mathbb{Z}_{n}}(P_1^3)$ contains a subgraph $(\overleftrightarrow{K_{(d_{1}-1)n}},\widehat{\frac{n}{d_1}})$.
			\item When $a_{1}\not \equiv a_{2}\not \equiv a_{3},b_{1}\not \equiv b_{2}\not \equiv b_{3}$, combining the above discussion, we obtain $a_{1}=a_{2}+k\frac{n}{d_{1}},a_{3}= a_{2}+l\frac{n}{d_{1}}$ and $b_{1}=b_{2}+\beta \frac{n}{d_{1}},b_{3}=b_{2}+\beta ^{\prime}\frac{n}{d_{2}}$, where $k,l,\beta,\beta ^{\prime}=1,2,...,d_{1}-1,k\neq l,\beta \neq \beta^{\prime}$. At this stage, only $(d_{1}-1)(d_{2}-2)n$ non-trivial quandle coloring vertices remain to be analyzed. By employing the methods from (i), we obtain the following two systems of equations:
			\begin{equation}
				\begin{cases}
					k\frac{n}{d_{1}}\not \equiv l\frac{n}{d_{2}},\qquad(3a)\\
					\beta \frac{n}{d_{1}}\not \equiv \beta^{\prime}\frac{n}{d_{2}}.\qquad(3b)
				\end{cases}
				\label{4.6}
			\end{equation}
			\begin{equation}
				\begin{cases}
					k\frac{n}{d_{1}}\tau \equiv k\frac{n}{d_{1}}b_{2}+\beta \frac{n}{d_{1}}\qquad k,\beta =1,2,...,d_{1}-1,\\
					l\frac{n}{d_{1}}\tau \equiv l\frac{n}{d_{1}}b_{2}+\beta^{\prime} \frac{n}{d_{1}} \qquad l,\beta^{\prime}=1,2,...,d_{2}-1.
				\end{cases}
				\label{4.7}
			\end{equation}
			Since $\gcd(\frac{n}{d_{1}},n)=\frac{n}{d_{1}},\gcd(\frac{n}{d_{2}},n)=\frac{n}{d_{2}}$, we obtain:
			\begin{equation}
				\begin{cases}
					k\tau \equiv kb_{2}+\beta \quad (mod\ d_{1}),\\
					l\tau \equiv lb_{2}+\beta^{\prime}\quad(mod\ d_{2}).
				\end{cases}
				\label{4.8}
			\end{equation}
			Let $[d_{1},d_{2}]$ denote the least common multiple of $d_{1}$ and $d_{2}$, then there exist $m_{1},m_{2}\in \mathbb{Z}^{+}$ such that $[d_{1},d_{2}]=m_{1}d_{1}=m_{2}d_{2}$ where $\gcd(m_{1},m_{2})=1$. According to \cite{Stein2017}, the system of equations (\ref{4.8}) is equivalent to the system (\ref{4.9})  below.
			\begin{equation}
				\begin{cases}
					km_{1}\tau \equiv km_{1}b_{2}+m_{1}\beta \\
					lm_{2}\tau \equiv lm_{2}b_{2}+m_{2}\beta^{\prime} 
				\end{cases}\quad (mod\ [d_{1},d_{2}]).
				\label{4.9}
			\end{equation}
			Since $\gcd(k,d_{1})=\gcd(k,d_{2})=1$, the systems of equations (\ref{4.8}) and (\ref{4.9}) have a unique solution if and only if the following condition (\ref{4.10}) holds. Moreover, in this case, the system of equations (\ref{4.7}) has $\frac{n}{[d_{1},d_{2}]}$ solutions.
			\begin{equation}
				k^{\phi (d_{1})-1}\beta \equiv l^{\phi (d_{2})-1}\beta^{\prime}\pmod{\gcd\left(\frac{d_1}{\gcd(k, d_1)}, \frac{d_2}{\gcd(l, d_2)}\right)}
				\label{4.10}
			\end{equation}\par
			Here, $\phi(d_{i}),i=1,2$ denotes the Euler's totient function. Clearly, when $k=\beta ,l=\beta ^{\prime}$, the equality (\ref{4.10}) holds. Combining this with system (\ref{4.6}) and (iii), there are $(d_{1}-1)(d_{2}-2)$ possible ways for the equality to be satisfied. \par
			It is straightforward to verify that neither $k=\beta ,l\neq \beta ^{\prime}$ nor $k\neq \beta ,l=\beta^{\prime}$ can exist. When $k\neq \beta ,l\neq \beta ^{\prime}$, for fixed $k$ and $l$ there are $d_{1}-1$ possible choices for $\beta$ and $d_{2}-2$ choices for $\beta ^{\prime}$. When $d_{1}\neq d_{2}$, this implies that the $(d_{1}-1)(d_{2}-2)$ quandle-colored vertices cannot be further partitioned. Consequently, in this case, $\mathcal{Q}_{\mathbb{Z}_{n}}(P_1^3)$ contains a subgraph $G_{4}=(\overleftrightarrow{K_{(d_{1}-1)(d_{2}-2)n}},\widehat{\frac{n}{[d_{1},d_{2}]}})$.
			
		\end{romanlist}
		\par 
		We now examine the connection relationships between the aforementioned subgraphs: Since no mapping in $Hom(\mathbb{Z}_{n},\mathbb{Z}_{n})$ can send a single value in $\mathbb{Z}_{n}$ to two distinct values in $\mathbb{Z}_{n}$, there exist no directed edges from the subgraph $G_{1}$ (containing only trivially colored vertices) pointing to other subgraphs containing non-trivially colored vertices. Moreover, it is straightforward to verify that no directed edges connect the subgraphs generated in cases (i), and (iii). Combining these subgraphs yields $G_{2}=[\bigsqcup_{2}(\overleftrightarrow{K_{(d_{1}-1)n}},\widehat{\frac{n}{d_{1}}})]$. By the same reasoning, we conclude that there are no directed edges from $G_{2}$ or $G_{3}$ to $G_{4}$ and $G_{2}$ to $G_{3}$. We now examine the directed edges from $G_{2}$, $G_{3}$ and $G_{4}$ to $G_{1}$, as well as those from $G_{4}$ to $G_{2}$ and $G_{3}$.
		\begin{enumerate}[label=(\arabic*)]
			\item First, we consider the directed edges from $G_{2}$, $G_{3}$ and $G_{4}$ to $G_{1}$. Let $\phi_{a}=\phi _{a_{1}a_{2}a_{3}},\phi_{b}=\phi_{b_{1}b_{2}b_{3}}$, where $v_{\phi_{a}}\in G_{2},v_{\phi_{b}}\in G_{1}$. Using the method of (i), we can obtain:
			\begin{equation}
				k\frac{n}{d_{1}}\tau \equiv k\frac{n}{d_{1}}b_{2}.
				\label{4.11}
			\end{equation}
			Since $\gcd(k,d_1)=1$, it is easy to know the congruence equation (\ref{4.11}) have $\frac{n}{d_{1}}$ solutions. The relationship between $G_{2}$ and $G_{1}$ is $G_{1}\overleftarrow{\nabla}_{\hat{\frac{n}{d_{1}}}}G_{2}$.\\
			\item Similarly, the relationship between $G_{3}$ and $G_{1}$ is $G_{1}\overleftarrow{\nabla}_{\hat{\frac{n}{d_{2}}}}G_{3}$.
			\item Let $v_{\phi_a}\in G_{4},v_{\phi _{b}}\in G_{1}$. We have:
			\begin{equation}
				\begin{cases}
					k\frac{n}{d_{1}}\tau \equiv k\frac{n}{d_{1}}b_{2},\\
					l\frac{n}{d_{2}}\tau \equiv l\frac{n}{d_{2}}b_{2}.
				\end{cases}
				\label{4.12}
			\end{equation}
			Consequently, we have:
			\begin{equation}
				\begin{cases}
					\tau \equiv b_{2}\qquad (mod\ d_{1}),\\
					\tau \equiv b_{2}\qquad (mod\ d_{2}).
				\end{cases}
				\label{4.13}
			\end{equation}
			Then we will get
			\begin{equation}
				\tau\equiv b_2\quad (mod\ [d_1,d_2])
			\end{equation}
			Therefore, the system of equations (\ref{4.12}) has $\frac{n}{[d_1,d_2]}$ solutions. Consequently, the relationship between $G_{4}$ and $G_{1}$ is given by: $G_{1}\overleftarrow{\nabla}_{\hat{\frac{n}{[d_1,d_2]}}}G_{4}$.
			\item Finally, examining the directed edge from $G_{3}$ to $G_{2}$, we let $\phi_{a}=\phi _{a_{1}a_{2}a_{3}},\phi_{b}=\phi_{b_{1}b_{2}b_{3}}$ where $v_{\phi _{a}}\in G_{3},v_{\phi_b}\in G_{2}$. Using the same method as in (i), we obtain the following three cases of system of congruences:
			\begin{equation}
				\begin{cases}
					k\frac{n}{d_{1}}\tau \equiv k\frac{n}{d_{1}}b_{1}+\beta \frac{n}{d_{1}},\\
					l\frac{n}{d_{1}}\tau \equiv l\frac{n}{d_{1}}b_{1}.
				\end{cases}
				\label{d4.11}
			\end{equation}
			\begin{equation}
				\begin{cases}
					k\frac{n}{d_{1}}\tau \equiv k\frac{n}{d_{1}}b_{1},\\
					l\frac{n}{d_{1}}\tau \equiv l\frac{n}{d_{1}}b_{1}+\beta ^{\prime}\frac{n}{d_{1}}.
				\end{cases}
				\label{d4.12}
			\end{equation}
			\begin{equation}
				\begin{cases}
					k\frac{n}{d_{1}}\tau \equiv k\frac{n}{d_{1}}b_{1}+\beta \frac{n}{d_{1}},\\
					l\frac{n}{d_{1}}\tau \equiv l\frac{n}{d_{1}}b_{1}+\beta \frac{n}{d_{1}}.
					\label{d4.13}
				\end{cases}
			\end{equation}
			Since $\gcd(k,d_{1})=\gcd(\beta,d_{1})=\gcd(l,d_{1})=1$, it is clear that system of congruences (\ref{d4.11}) and (\ref{d4.12}) have no solution. If system of congruences (\ref{d4.13}) has a solution, it must satisfy $k^{\phi (n)-1}\beta =l^{\phi (n)-1}\beta $, which implies $k=l$, but, this contradicts the condition $v_{\phi _{a}}\in G_{3}$. Therefore, there are no directed edges from $G_{3}$ to $G_{2}$ which indicates that there are no directed edges from $G_{4}$ to $G_{2}$. Similarly, there are no directed edges from $G_{4}$ to $G_{3}$.\par 
		\end{enumerate}
		In summary, we can conclude that:\\ $\mathcal{Q}_{\mathbb{Z}_{n}}(P_1^3)=(G_{1}\overleftarrow{\nabla}_{\widehat{\frac{n}{d_{1}}}}G_{2})\bigsqcup (G_{1}\overleftarrow{\nabla}_{\widehat{\frac{n}{d_{2}}}}G_{3})\bigsqcup(G_{1}\overleftarrow{\nabla}_{\widehat{\frac{n}{[d_1,d_2]}}}G_{4})$.
	\end{proof}
	
	According to Theorem \ref{thmp}, for a given dihedral quandle $\mathbb{Z}_{n}$, when $n$ is a prime number, the coloring number of n-pretzel links is $n^q,q=1,2,...,m$, from \cite{Zhou2023,Zhou2024,Elhamdadi2025}, this indicates that the solution to the coloring equations hasindicates that the solution to the coloring equations has $q$ free variables. Therefore, we can derive the quiver structure for general pretzel links.
	\begin{theorem}
		Given a pretzel link $P^m=(p_{1},p_{2},...,p_{m})$, and a dihedral quandle $\mathbb{Z}_{n}$ where $n$ is a prime number. Let $h=\sum p_{i_{1}}p_{i_{2}}...p_{i_{m-1}},1\leq i_{1} \leq i_{1}\leq ...\leq i_{m-1}$. The quandle coloring quivers if $P^m$ is:
		\begin{equation}
			\mathcal{Q}_{\mathbb{Z}_{n}}(P^m)=
			\begin{cases}
				(\overleftrightarrow{K_{n}},\hat{n})\qquad h\not \equiv 0,\\
				(\overleftrightarrow{K_{n}},\hat{n})\overleftarrow{\nabla}_{\hat{1}}(\overleftrightarrow{K_{(n-1)n}},\hat{1}) \quad h\equiv 0\ and\ p_{1}...p_{m-2}\equiv ...\equiv p_{3}...p_{m}\equiv 0\\ \qquad \qquad \qquad \qquad \qquad \quad do\ not\ all\ hold,\\
				\qquad \quad\;\ \, \vdots\\   
				(\overleftrightarrow{K_{n}},\hat{n})\overleftarrow{\nabla}_{\hat{1}}[\bigsqcup_{\frac{n^{m-1}-n}{n(n-1)}}(\overleftrightarrow{K_{n(n-1)}},\hat{1})] \quad among\ p_{1},p_{2},...,p_{m}\ only\ one\\\qquad\qquad\qquad\qquad\qquad\qquad\qquad\quad\;\  is\ coprime\ with\ n,\\
				(\overleftrightarrow{K_{n}},\hat{n})\overleftarrow{\nabla}_{\hat{1}}[\bigsqcup_{\frac{n^{m}-n}{n(n-1)}}(\overleftrightarrow{K_{n(n-1)}},\hat{1})] \qquad p_{1}\equiv p_{2}\equiv ...\equiv p_{m}\equiv 0.
			\end{cases}
			\nonumber
		\end{equation}
	\end{theorem}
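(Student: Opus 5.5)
The plan is to combine Theorem \ref{thmp} with a direct description of $Hom(\mathbb{Z}_n,\mathbb{Z}_n)$ for $n$ prime. First I would record that every quandle endomorphism of the dihedral quandle $\mathbb{Z}_n$ is affine. Such an $f$ is determined by $f(0)=\alpha$ and $f(1)=\tau$, since $f(j)=j\tau-(j-1)\alpha$ by iterating $x\rhd y=2y-x$, exactly as in the proof of Theorem \ref{th4.1}. Conversely every pair $(\alpha,\tau)$ gives an endomorphism $f(x)=\alpha+(\tau-\alpha)x$. So $|Hom(\mathbb{Z}_n,\mathbb{Z}_n)|=n^2$, and $f$ is either constant (when $\tau=\alpha$) or a bijection $x\mapsto ux+\alpha$ with $u\in\mathbb{Z}_n^{*}$, because $n$ is prime.

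Next, by Theorem \ref{thmp}, in each case the coloring space is an affine subspace of $\mathbb{Z}_n^m$ (in the coordinates $y_1,\dots,y_m$). It consists of the diagonal (trivial colorings) plus a linear solution space. Concretely, $Hom(P^m,\mathbb{Z}_n)=\{c\mathbf{1}+w: c\in\mathbb{Z}_n,\ w\in W\}$, where $W$ is the solution set of (\ref{el32}) pulled back through $D$, and $\dim_{\mathbb{Z}_n}W=q-1$ when the count is $n^q$.

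Then I would count edges $\phi_a\to\phi_b$, that is, pairs $(\alpha,\tau)$ with $f\circ\phi_a=\phi_b$. If $\phi_a$ is trivial (constant $c$), then $f\circ\phi_a$ is constant with value $f(c)$. For any target constant $c'$, the condition $f(c)=c'$ is satisfied by exactly $n$ of the $n^2$ pairs. This gives $(\overleftrightarrow{K_n},\hat n)$ and no edges from trivial to nontrivial colorings.

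If $\phi_a=c\mathbf{1}+w$ with $w\neq0$, then:
\begin{itemize}
\item a constant $f$ sends it to the trivial coloring with that constant, giving exactly one edge to each trivial vertex, hence $\overleftarrow{\nabla}_{\hat 1}$;
\item a bijective $f$ sends it to $(uc+\alpha)\mathbf{1}+uw$.
\end{itemize}
The orbit of $\phi_a$ under the bijective affine maps is therefore $\{c'\mathbf{1}+uw\}$, of size $n(n-1)$. Distinct $(u,\alpha)$ give distinct images because $w\neq0$, so within an orbit every ordered pair (including each vertex with itself) is joined by exactly one edge. This makes each orbit a $(\overleftrightarrow{K_{n(n-1)}},\hat 1)$. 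There are no edges between different orbits, since orbits correspond to the lines $\mathbb{Z}_n^{*}w$ in $W$.

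Counting orbits gives $(n^q-n)/(n(n-1))$ components, which reproduces every listed case: $q=2$ gives one block $K_{(n-1)n}$, $q=m-1$ and $q=m$ give the stated disjoint unions, and $h\not\equiv0$ leaves only $(\overleftrightarrow{K_n},\hat n)$.

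The main obstacle I expect is making the affine-subspace description rigorous from Theorem \ref{thmp}. That theorem only supplies counts, so I would argue directly that the solution set of the homogeneous system (\ref{np1})/(\ref{el31}) is a $\mathbb{Z}_n$-linear subspace containing $\mathbf{1}$, which holds because every equation involves only differences $y_i-y_j$. Its dimension is then $q$ by the count, and $W$ can be taken as any complement to the span of $\mathbf{1}$. I would also verify carefully the loop convention: a vertex has one loop, from $u=1,\alpha=0$, consistent with the constant weight $\hat 1$.
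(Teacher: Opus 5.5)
Your proof is correct. It follows the same skeleton as the paper but proves the step the paper only cites.

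The paper's justification is a brief remark. Theorem~\ref{thmp} gives $|Hom(P^m,\mathbb{Z}_n)|=n^q$, this is read as ``$q$ free variables,'' and the quiver shape is then taken from the torus-link papers of Zhou--Liu and Elhamdadi--Jones--Liu. You also take the exponent $q$ from Theorem~\ref{thmp}, which is the only pretzel-specific input, but you replace the citation with a self-contained computation:
\begin{enumerate}
\item The endomorphisms of $\mathbb{Z}_n$ are the $n^2$ affine maps $x\mapsto ux+\alpha$.
\item The coloring space $V$ is a $q$-dimensional $\mathbb{Z}_n$-subspace containing $\mathbf{1}$.
\item The constant maps give exactly one edge from each nontrivial vertex to each trivial one.
\item The $n(n-1)$ bijective affine maps act freely on nontrivial colorings, hence simply transitively on each set $\{c\mathbf{1}+uw\}$.
\end{enumerate}
So the nontrivial vertices split into $(n^{q-1}-1)/(n-1)$ blocks $(\overleftrightarrow{K_{n(n-1)}},\hat 1)$, one for each point of the projective space of $V/\langle\mathbf{1}\rangle$.

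This gives the paper's ``free variables'' step a precise meaning. It also shows the quiver depends only on $\dim V$, so the conclusion holds for any link once its coloring count modulo a prime is known. The argument needs no change for $n=2$.

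The step you flagged as the main obstacle is routine. The Fox colorings of any diagram form a linear subspace containing the constants, because each crossing relation $2y-x-z\equiv 0$ has coefficients summing to zero.

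Like the paper, you rely on Theorem~\ref{thmp} to assign an exponent $q$ to each listed case. So in both approaches the case labels are only as rigorous as that theorem's proof, while the quiver shape for a given $q$ is now fully justified by your argument.
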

	\newpage
	\begin{example}
		Given a pretzel link $P_{366}=(3,3,6)$, and a dihedral quandle $\mathbb{Z}_{n}$ with $n=6$, it follows from Theorem \ref{th3} that the number of colorings of the oriented link $P_{336}$ is $Hom(P_{336},\mathbb{Z}_{n})=3\times3\times n=54$. Consequently, by Theorem \ref{th4.1}, the quiver structure of $P_{336}$ is
		\begin{equation}
			\mathcal{Q}_{\mathbb{Z}_{6}}(P_{336})=G_{1}\overleftarrow{\nabla}_{\hat{2}}G_{2}
			\nonumber
		\end{equation}
		where $G_{1}=(\overleftrightarrow{K_{6}},\hat{6}),G_{2}=[\bigsqcup_{4}(\overleftrightarrow{K_{12}},\hat{2})].$\par 
		To visualize the obtained quiver structure, we first construct the following oriented pseudographs: $(\overleftrightarrow{K_{6}},\hat{6}),(\overleftrightarrow{K_{12}},\hat{2})$.\par 
		\begin{figure}[H]
			\centering
			\includegraphics[width=0.58\textwidth,height=0.5\textwidth]{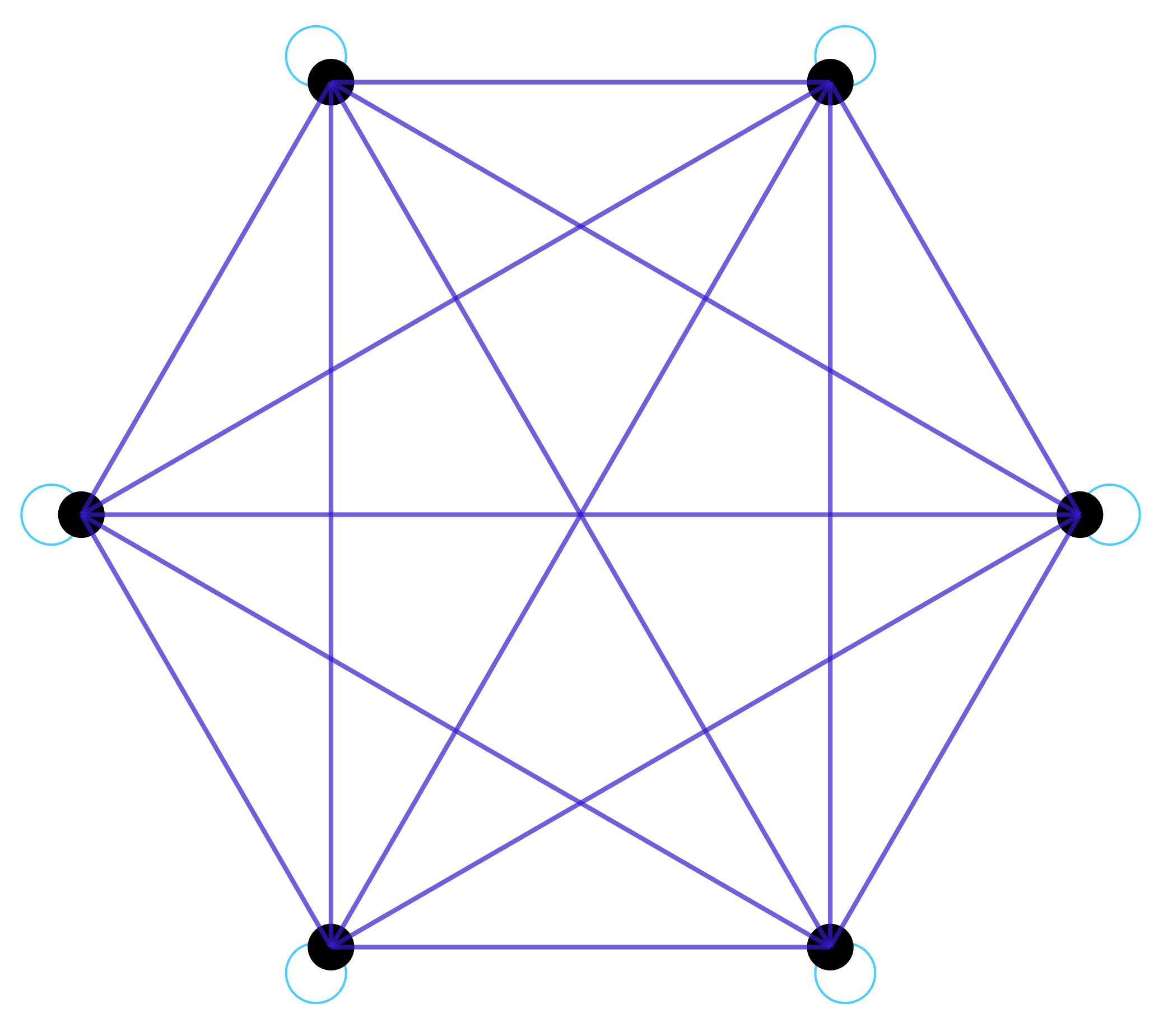}
			\caption{$(\overleftrightarrow{K_{6}},\hat{6})$}
		\end{figure}
		For clarity of presentation, in the figure above we have omitted the repeated edges and the orientations the edges in $(\overleftrightarrow{K_{6}},\hat{6})$. In the diagram of $(\overleftrightarrow{K_{6}},\hat{6})$, each edge has a weight of 6, indicating that there are 6 edges between every pair of vertices, each with an arrow denoting its direction. Similarly, for visual simplicity, the orientations of the edges are omitted in the subsequent figure $(\overleftrightarrow{K_{12}},\hat{2})$.
		\begin{figure}[H]
			\centering
			\includegraphics[width=0.64\textwidth,height=0.61\textwidth]{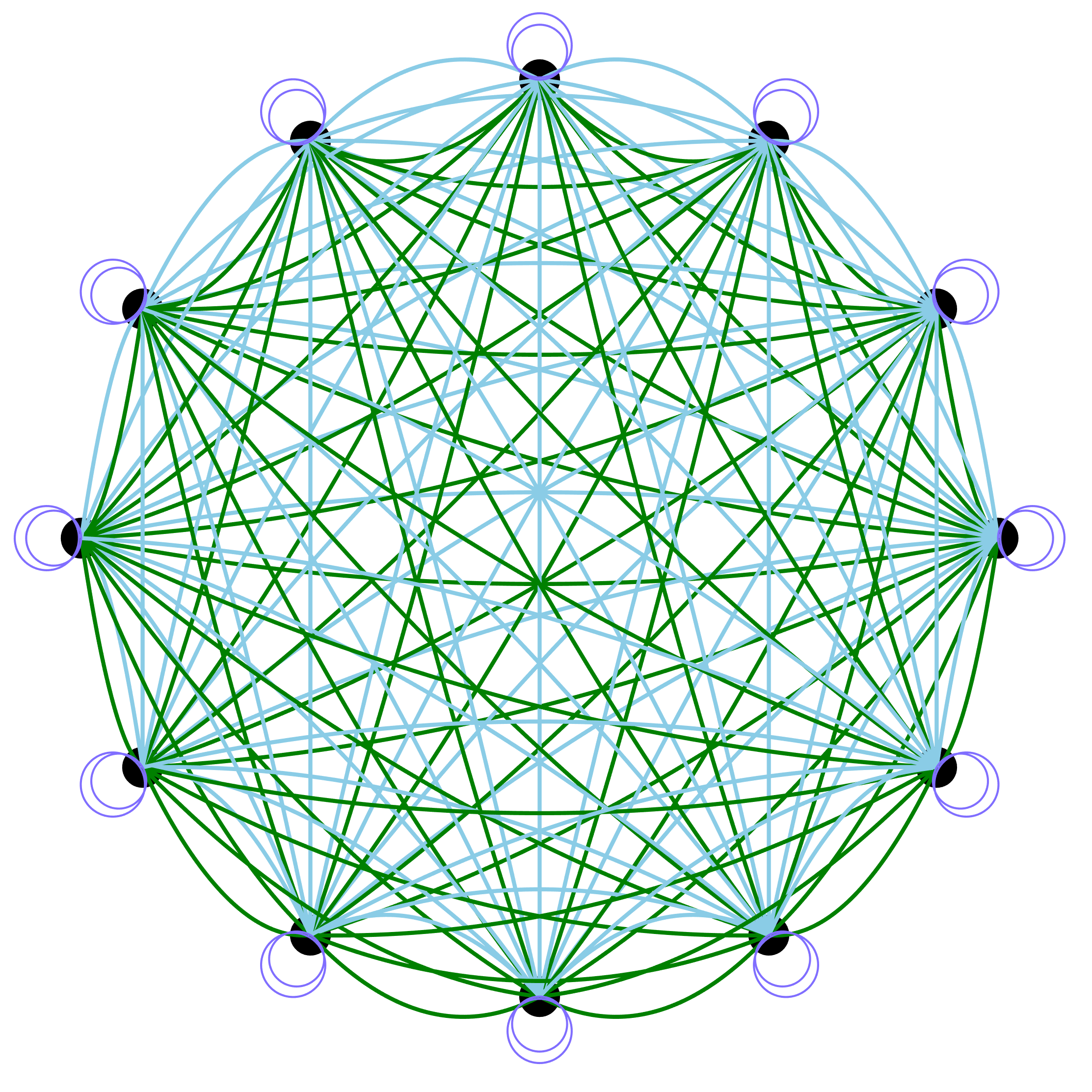}
			\caption{$(\overleftrightarrow{K_{12}},\hat{2})$}
		\end{figure}
		If we represent all vertices in $(\overleftrightarrow{K_{6}},\hat{6})$ with black dots and all vertices in $(\overleftrightarrow{K_{12}},\hat{2})$ with green dots, then the relationship between $G_{1}$ and $G_{2}$ can be expressed as:
		\begin{figure}[H]
			\centering
			\begin{tikzpicture}[>={Stealth[length=3mm]}]
				\node[circle, fill=green!60!black, inner sep=2pt] (left) at (-2.7,0) {};
				
				\node[circle, fill=green!60!black, inner sep=2pt] (below) at (0,-2.7) {};
				\node[circle, fill=green!60!black, inner sep=2pt] (right) at (2.7,0) {};
				\node[circle, fill=green!60!black, inner sep=2pt] (above) at (0,2.7) {};
				\node[circle, fill=black, inner sep=2pt] (center) at (0,0) {};
				
				\draw[->, thick] (left) to [out=30, in=150] (center);  
				\draw[->, thick] (left) to [out=-30, in=-150] (center); 
				
				\draw[->, thick] (above) to[bend left=30] (center);
				\draw[->, thick] (above) to[bend right=30] (center);
				
				\draw[->, thick] (right) to [out=150, in=30] (center);   
				\draw[->, thick] (right) to [out=-150, in=-30] (center); 
				\draw[->, thick] (below) to[bend left=30] (center);
				\draw[->, thick] (below) to[bend right=30] (center);
			\end{tikzpicture}
			\caption{$(\overleftrightarrow{K_{6}},\hat{6})\overleftarrow{\nabla}_{\hat{2}}[\bigsqcup_{4}(\overleftrightarrow{K_{12}},\hat{2})$}
			\label{t45}
		\end{figure}\par 
		
	\end{example}

	\bibliographystyle{ws-jktr}
	\bibliography{sample}

@article{Gabriel1972,
  title={Unzerlegbare darstellungen I},
  author={Gabriel, Peter},
  journal={Manuscripta mathematica},
  volume={6},
  pages={71--103},
  year={1972},
  publisher={Springer}
}

@article{Kathryn,
   title={Fundamentally Different \textit{m}-Colorings of Pretzel Knots},
  author={Brownell, Kathryn and O’NEIL, KAITLYN and Taalman, Laura},
  year={2004},
  publisher={Citeseer}
}

@book{Florentin,
   title={Algorithms for solving linear congruences and systems of linear congruences},
  author={Smarandache, Florentin},
  year={1987},
  publisher={Infinite Study}
}

@book{Stein2017,
   title = {Elementary Number Theory: Primes, Congruences, and Secrets},
   author={William Stein},
   year = {2009},
   publisher={Springer,New York}
}

@techReport{Wang,
   title={Researches on the Elementary Modular Matrix Transformations and the System of Linear Congruence Equations},
  author={Daiwei, Wang},
  url={https://docslib.org/doc/5171427/researches-on-the-elementary-modular-matrix-transformations-and-the}
}

@article{Elhamdadi2025,
   author = {Mohamed Elhamdadi and Brooke Jones and Minghui Liu},
   doi = {10.1007/s10801-024-01373-4},
   issn = {0925-9899},
   issue = {1},
   journal = {Journal of Algebraic Combinatorics},
   month = {2},
   pages = {4},
   title = {Quandle coloring quivers of general torus links by dihedral quandles},
   volume = {61},
   url = {https://link.springer.com/10.1007/s10801-024-01373-4},
   year = {2025}
}

@book{mathews1892,
  title={Theory of Numbers: Part I.},
  author={Mathews, George Ballard},
  year={1892},
  publisher={Deighton, Bell and Company}
}

@article{Zhou2024,
   author = {Boxin Zhou and Ximin Liu},
   doi = {10.1142/S0218216524500147},
   issn = {17936527},
   issue = {5},
   journal = {Journal of Knot Theory and its Ramifications},
   month = {4},
   publisher = {World Scientific},
   title = {Quandle coloring quivers of \textit{(p, q)}-torus links},
   volume = {33},
number={05},
year = {2024},
pages={2450014}  
}

@article{Basi2021p2,
   author = {Jagdeep Basi and Carmen Caprau},
   journal={Journal of Knot Theory and Its Ramifications},
   doi = {10.1142/S0218216522500572},
   month = {12},
   title = {Quandle Coloring Quivers of \textit{(p, 2)}-Torus Links},
   url = {http://arxiv.org/abs/2112.05297 http://dx.doi.org/10.1142/S0218216522500572},
volume={31},
  number={09},
  pages={2250057},
   year = {2021},
   publisher={World Scientific}
}

@article{Cho2018,
  title={Quandle coloring quivers},
  author={Cho, Karina and Nelson, Sam},
  journal={Journal of Knot Theory and Its Ramifications},
  volume={28},
  number={01},
  pages={1950001},
  year={2019},
  publisher={World Scientific}
}

@book{Basi2021,
   title={Quandle coloring quivers of torus knots},
  author={Basi, Jagdeep},
  year={2021},
  month={5},
  school={California State University},
  publisher={California State University, Fresno},
  pages={1--73},
url={http://hdl.handle.net/20.500.12680/th83m459q}
}

@article{Zhou2023,
   author = {Boxin Zhou and Ximin Liu},
   doi = {10.1142/S0218216523500165},
   issn = {02182165},
   issue = {3},
   journal = {Journal of Knot Theory and its Ramifications},
   month = {3},
   publisher = {World Scientific},
   title = {Quandle coloring quivers of \textit{(p,3)}-Torus links},
   volume = {32},
   year = {2023}
}

@article{Joyce1982,
  title={A classifying invariant of knots, the knot quandle},
  author={Joyce, David},
  journal={Journal of Pure and Applied Algebra},
  volume={23},
  number={1},
  pages={37--65},
  year={1982},
  publisher={Elsevier}
}

@article{Kim2007,
  title={Some invariants of pretzel links},
  author={Kim, Dongseok and Lee, Jaeun},
  journal={Bulletin of the Australian Mathematical Society},
  volume={75},
  number={2},
  pages={253--271},
  year={2007},
  publisher={Cambridge University Press}
}

@article{Mohamed2015,
   title={Quandles: An Introduction to the algebra of Knots, the Student Mathematical Library, vol. 74},
  author={Nelson, Sam and Elhamdadi, Mohammed},
  journal={American Mathematical Society},
volume={74},
isbn={978-1-4704-2213-4},
  year={2015}
}

@inproceedings{Brownell2003,
  title={Fundamentally Different \textit{m}-Colorings of Pretzel Knots},
  author={Kathryn Cramer Brownell},
  year={2003},
  publisher={Citeseer},
url={https://api.semanticscholar.org/CorpusID:38744286}
}

@mastersthesis{Ostrander,
   title={\textit{P}-Coloring of pretzel knots},
  author={Ostrander, Robert},
  year={2013},
  type={Master of Science},
  school={California State University, Northridge}
}
\end{document}